\documentclass[a4paper,12pt]{amsart}
\usepackage{amsmath,amssymb}
\usepackage{tikz}
\usepackage[normalem]{ulem}

\emergencystretch=50pt
\allowdisplaybreaks[2]

\setlength{\oddsidemargin}{0cm}
\setlength{\evensidemargin}{0cm}
\setlength{\topmargin}{0cm}
\setlength{\textwidth}{16cm}
\setlength{\textheight}{23cm}

\numberwithin{equation}{section}

\newtheorem{thm}{Theorem}[section]
\newtheorem{prop}[thm]{Proposition}
\newtheorem{lemma}[thm]{Lemma}
\newtheorem{cor}[thm]{Corollary}

\theoremstyle{definition}
\newtheorem{defn}[thm]{Definition}
\theoremstyle{remark}
\newtheorem{rmk}[thm]{Remark}
\newtheorem{ex}[thm]{Example}

\newcommand{\C}{\mathbb{C}}
\newcommand{\N}{\mathbb{N}}

\newcommand{\Z}{\mathbb{Z}}

\newcommand{\go}{G^{(0)}}
\newcommand{\KP}{\operatorname{KP}}

\newcommand{\supp}{\operatorname{supp}}

\newcommand{\lsp}{\operatorname{span}}
\newcommand{\interior}{\operatorname{Int}}
\newcommand{\iso}{\operatorname{Iso}}
\newcommand{\image}{\operatorname{im}}


\begin{document}

\title[Ideals of Steinberg algebras]
{Ideals of Steinberg algebras of strongly effective groupoids, with applications to Leavitt path algebras}

\author{Lisa Orloff Clark}
\address{Lisa Orloff Clark\\
    Department of Mathematics and Statistics\\
    University of Otago\\
    PO Box 56\\
    Dunedin 9054\\
    New Zealand}
\email{lclark@maths.otago.ac.nz}

\author{Cain Edie-Michell}
\address{Cain Edie-Michell\\
Mathematical Sciences Institute\\
The Australian National University\\
Union Lane\\
Canberra ACT 2601\\
Australia}
\email{cain.edie-michell@anu.edu.au}

\author{Astrid an Huef}
\address{Astrid an Huef\\
    Department of Mathematics and Statistics\\
    University of Otago\\
    PO Box 56\\
    Dunedin 9054\\
    New Zealand}
\email{astird@maths.otago.ac.nz}

\author{Aidan Sims}
\address{Aidan  Sims\\
    School of Mathematics and Applied Statistics\\
    University of Wollongong\\
    NSW 2522\\
    Australia}
\email{asims@uow.edu.au}

\subjclass{16S99 (Primary); 16S10, 22A22 (Secondary)}
\keywords{Groupoid; Steinberg algebra; Leavitt path algebra, Kumjian--Pask algebra, ideal structure}

\date{26 January 2016}

\begin{abstract}
We consider the ideal structure of Steinberg algebras over a commutative ring with identity. We
focus on Hausdorff groupoids that are strongly effective in the sense that their
reductions to closed subspaces of their unit spaces are all effective. For such a
groupoid, we completely describe the ideal lattice of the associated Steinberg algebra
over any commutative ring with identity. Our results are new even for the special case of
Leavitt path algebras; so we describe explicitly what they say in this context, and give
two concrete examples.
\end{abstract}

\thanks{This research was supported by Marsden grant 15-UOO-071 from the Royal Society of New Zealand and by Discovery Project grant DP150101598 from the Australian Research Council.  Part of this research was completed while the fourth author was attending the workshop \emph{Classification of operator algebras: complexity, rigidity, and dynamics} at the Mittag-Leffler Institute.} 

\maketitle

\section{Introduction}
Leavitt path algebras over a field have been studied intensively
since their independent introduction, around 2005,  by Abrams--Aranda-Pino in \cite{AbrPino} and
Ara--Moreno--Pardo in \cite{AMP}.  One of the earliest questions asked about these algebras was what the ideals look like. The lattice of ideals is now completely understood, see, for example, \cite[Theorem~2.8.10]{AbramsAraMolina-book} or \cite[Theorem~11]{Abrams}.
Work on the ideal structure of the Leavitt path algebra and its irreducible representations continues. See for example the recent papers on the generators of ideals \cite{Ranga2014}, prime and primitive ideals \cite{APS2009, Ranga2013, EsinKanuniRanga}, two-sided chain conditions \cite{ABCK2012}, and  on irreducible representations \cite{Chen, AraRanga, HazratRanga}.

In 2011, Tomforde  went on to consider Leavitt path algebras over commutative rings
$R$ with identity in \cite{Tomforde:JPAA11}, and again considered the ideal structure.
Things are more complicated in this setting because the ideal structure of the ring $R$
has an effect on the ideal structure of the Leavitt path algebra. Tomforde sidestepped
this issue by considering only the ``basic'' ideals which are, roughly speaking, the
ideals that contain a scalar multiple of a generator if and only if they contain the
generator itself, and are therefore insensitive to the ideal structure of $R$.  
The structure of the basic ideals in  Leavitt path algebra has recently been reconsidered by Larki in \cite{Larki} for more general graphs than were allowed in \cite{Tomforde:JPAA11}. Larki also studies the prime and primitive ideals, and this involves non-basic ideals.

In this paper, we investigate the basic and non-basic ideal structure of a large
class of Steinberg algebras.  The Steinberg algebras, introduced independently in \cite{Steinberg:AM10} and in \cite{CFST},  are associated to ample groupoids. They include the Kumjian--Pask algebras of higher-rank graphs introduced in \cite{ACaHR}, which in turn include the Leavitt path algebras.  The  advantage of working with the more general Steinberg algebras is that this brings into play, in the algebraic setting, powerful techniques from Renault's theory of groupoid $C^*$-algebras. Indeed, Renault's theory previously played a fundamental role in the development of the theory of graph $C^*$-algebras and their analogues.

Groupoid models in $C^*$-algebra theory are particularly well-suited to answering
questions about ideal structure \cite{Renault:JOT91}. We focus on groupoids $G$ which are
strongly effective in the sense that in every reduction of $G$ to a closed invariant
subspace of its unit space $G^{(0)}$, the interior of the isotropy consists only of
units. This reduces to Condition~(K) for graphs and to ``strong aperiodicity'' for
higher-rank graphs. (This is folklore, but we provide a proof in Corollary~\ref {cor-se-iff-sa}.) Our results provide a complete description of the lattice of ideals in the
Steinberg algebra of such a groupoid. Since these results are new even for Leavitt path
algebras, and hence also for Kumjian--Pask algebras, we give an explicit account of what
our main theorem  says in these special cases.

We start in  Section~\ref{sec:basic} by analysing the basic ideals of the Steinberg algebra of a strongly
effective groupoid . We find that the ideals are indexed by the
open invariant subsets of the unit space as expected. When $R$ is a field, every
ideal is a basic ideal. Thus we can  draw some conclusions 
about the ideals in Steinberg algebras over fields, and the relationship of these to the ideals
of the corresponding groupoid $C^*$-algebra, at least when the groupoid $G$ is amenable.

In Section~\ref{sec:nonbasic} we build on our analysis of basic ideals to describe 
all the ideals in the Steinberg algebra. The extra ideals  arising from the ideals of the ring $R$ are encoded by functions $\pi$, satisfying a consistency condition relating nesting of ideals in $R$ to nesting of subsets of the unit space, from the collection of open invariant subsets of $G^{(0)}$ to the set
$\mathcal{L}(R)$ of ideals of $R$.

Containment of ideals in the Steinberg algebra is encoded by a very natural partial order
on the functions $\pi$ described in the preceding paragraph. So in principle the lattice
structure on the set of ideals is explicitly described in terms of the functions $\pi$.
However, it is difficult to describe the join operation on functions $\pi$ that
corresponds to addition of ideals of the Steinberg algebra. In Section~\ref{sec:lattice},
we introduce an alternative characterisation of the ideals in the Steinberg algebra in
terms of functions $\rho : G^{(0)} \to \mathcal{L}(R)$ that are continuous with respect
to a suitable topology on $\mathcal{L}(R)$. This allows us to describe the join 
and meet operations quite naturally.

Finally, in Section~\ref{sec:Lpa/KPa}, we translate our results into the language of
Leavitt path algebras and Kumjian--Pask algebras. Here, the ideals are parameterised by
functions from the collection of saturated hereditary subsets of the vertex set of the
graph into the set of ideals of $R$, again satisfying a suitable consistency condition;
or alternatively by continuous functions from the infinite-path space of the graph to the
ideal space $\mathcal{L}(R)$. We detail the content of our theorems for two concrete
examples of graphs, each emphasising the advantages of one of these two
parameterisations.

\section{Preliminaries}\label{sec:prelims}
We use the groupoid conventions of \cite{CE}.
Let $G$ be a groupoid. A subset $U$ of the unit space  $\go$ of $G$ is \emph{invariant}
if $s(\gamma) \in U$ implies $r(\gamma) \in U$; equivalently, 
\[
    r(s^{-1}(U)) = U = s(r^{-1}(U)).
\]
 Given $V \subseteq\go$, we write $[V]$ for the smallest invariant subset of $\go$ containing $V$. Thus
\[
    [V] = r(s^{-1}(V)) = s(r^{-1}(V)).
\]
We use the standard
notation from \cite[page~6]{Renault1980} where $G_u = \{\gamma \in G : s(\gamma) = u\}$, $G^u =
\{\gamma \in G : r(\gamma) = u\}$ and $G^u_u = G_u \cap G^u$ for each unit $u \in
G^{(0)}$. The \emph{isotropy groupoid} of $G$ is
\[
    \iso(G) := \{ g\in G: s(\gamma) = r(\gamma)\} = \bigcup_{u \in G^{(0)}} G^u_u.
\]

Let $U$ be an invariant subset of $\go$. We write $G_U := s^{-1}(U)$, 
and then $G_U$ coincides with the \emph{restriction}
\[
 G|_U:=\{\gamma \in G : s(\gamma), r(\gamma) \in U\}.
\]
of $G$ to $U$. This $G_U$ is a groupoid with unit space $U$.

For subsets $W,V \subseteq G$, we define $WV := \{\gamma\eta : \gamma \in W, \eta \in V,
s(\gamma) = r(\eta)\}$.

Now let $G$ be a topological groupoid. 
A  subset $B$ of $G$ is a \emph{bisection} if the source and range maps restrict 
to homeomorphisms on $B$; for an open set to be a bisection we require the source and range maps to restrict 
to homeomorphisms onto open subsets of $\go$. 
Then $G$ is called \emph{ample} if $G$ has a basis of
compact open bisections. In this paper, we only consider ample Hausdorff  groupoids.

An ample Hausdorff groupoid $G$
is \emph{effective} if the interior of $\iso(G)$  is just $\go$.
It follows that when $G$ is effective, if $B$ is a nonempty compact open bisection such that $B \subseteq G\setminus\go$, then $B \setminus \iso(G) \neq \emptyset$.

When $G$ is second countable, $G$ is effective if and only if it is topologically
principal in the sense that $\{u \in \go: G^u_u = \{u\}\}$  is dense
in  $\go$ (see \cite[Proposition~3.6]{Renault:IMSB08}). Our results apply to
groupoids $G$ that are not second countable, so for us the two conditions are, in
general, different.

\begin{defn}
A groupoid $G$ is \emph{strongly effective} if for every nonempty closed
invariant subset $V$ of $\go$, the groupoid $G_V$ is effective.
\end{defn}

If $G$ is strongly effective, then it is effective because $\go$ is a closed
invariant set. If $G$ is second countable, then so is $G_V$ for every invariant subset $V$ of $\go$, and so $G$ is
strongly effective if and only if it is essentially principal in the sense of
\cite[Chapter~2, Definition~4.3]{Renault1980}.

Let $G$ be  an ample Hausdorff  groupoid   and $R$ a commutative ring with identity. We write
$A_R(G)$ for the \emph{Steinberg algebra} of all locally constant, compactly supported
functions $f : G \to R$, equipped with the convolution product. As a set, $A_R(G)$
is the $R$-linear span \[\lsp_R\{1_B : B\text{ is a compact open bisection}\}.\]

For $f\in A_R(G)$, the set $\{\gamma\in G : f(\gamma) \not= 0\}$ is a finite
union of compact open sets, and so is itself compact and open.  Since compact subsets of
a Hausdorff space are closed, we have
\[
    \supp(f) := \overline{\{\gamma \in G : f(\gamma) \not= 0\}} =  \{\gamma \in G : f(\gamma) \not= 0\}.
\]
Under the convolution product on $C_c(G)$, for $f,g\in A_R(G)$ we have $\supp(f * g) \subseteq \supp(f)
\supp(g)$.

\section{Basic ideal structure}\label{sec:basic}

Throughout, $G$ is an ample Hausdorff  groupoid and $R$ is a commutative ring with identity.

When the coefficient ring $R$ is not a field, the ideal structure of $A_R(G)$ depends on
the ideal structure of $R$.   For example, if $G = \{e\}$ is the trivial group, then the Steinberg algebra
$A_R(G)$ is isomorphic to $R$ as an $R$-algebra, and then the ideals of $A_R(G)$ are precisely the
ideals of $R$.
An ideal $I$ of $A_R(G)$ is a \emph{basic ideal} if 
\[
 \text{$K$ a compact open  subset of $\go$}, 0\neq r\in R \text{ and }   r1_K \in I \quad \implies \quad 1_K \in I.\]
 When $G = \{e\}$, the only nonzero basic ideal is $R$ itself. In
general, the basic ideals are the ones that reflect the structure of $G$ alone, and do
not reflect the structure of $R$; we expect  the basic-ideal structure to be
independent of $R$.  Basic ideals of $A_R(G)$ were
introduced by the first two authors in \cite{CE}, and they generalise the basic ideals
of a Leavitt path algebra studied by Tomforde in \cite{Tomforde:JPAA11}. 

The first step in studying the ideal structure of $A_R(G)$  is to study the basic ideals.
By \cite[Theorem~4.1]{CE}, if $G$ is an ample Hausdorff groupoid, then $A_R(G)$ has no proper basic ideals if and only if $G$ is effective and minimal. In this paper we consider groupoids that are strongly effective (hence
effective) but not minimal. The main result of this section is the following.

\begin{thm} \label{thm:basicideal}
Let $G$ be an ample Hausdorff groupoid, and let  R be a commutative ring with identity.  Then $G$ is strongly effective if and only if 
\[
    U \mapsto I_U:=\{f \in A_R(G) : \supp f \subseteq G_U\}
\]
is a lattice isomorphism from the  open invariant subsets of $\go$ onto the  basic ideals of $A_R(G)$. 
\end{thm}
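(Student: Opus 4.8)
The plan is to reduce the theorem to a single surjectivity statement plus one compression lemma. First I would record the facts that hold for \emph{every} ample Hausdorff $G$, with no effectiveness hypothesis. For open invariant $U$ the set $I_U$ is a basic ideal (if $r1_K\in I_U$ with $\emptyset\neq K\subseteq\go$ compact open and $0\neq r$, then $K=\supp(r1_K)\subseteq G_U\cap\go=U$, so $1_K\in I_U$), and $U\mapsto I_U$ is order preserving. For a basic ideal $I$ set $U_I:=\bigcup\{K:K\subseteq\go \text{ compact open},\ 1_K\in I\}$; this is open, and it is invariant because for a compact open bisection $B$ and compact open $K\subseteq\go$ one has $1_B*1_K*1_{B^{-1}}=1_{r(B\cap s^{-1}(K))}$, so conjugating $1_K\in I$ by bisections keeps us inside $I$. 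One checks $U_{I_U}=U$, so $U\mapsto I_U$ is an order embedding with partial inverse $I\mapsto U_I$; being an order isomorphism onto its image it will automatically be a lattice isomorphism, with meet $\cap$ and join $I_U+I_W=I_{U\cup W}$ (the nontrivial inclusion $I_{U\cup W}\subseteq I_U+I_W$ is a routine decomposition of a compactly supported function over the open cover $G_U\cup G_W$). Crucially, the inclusion $I_{U_I}\subseteq I$ holds with no effectiveness: if $\supp f\subseteq G_{U_I}$ then each compact open bisection $D\subseteq\supp f$ has $s(D)$ covered by compact opens $K$ with $1_K\in I$, and collecting finitely many into one such $K\supseteq s(D)$ gives $1_D=1_D*1_K\in I$. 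Thus the whole theorem comes down to: $G$ is strongly effective iff $I\subseteq I_{U_I}$ for every basic ideal $I$.

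The forward direction rests on an \emph{effective compression lemma}: if $H$ is effective and $f\in A_R(H)$ has $f(w_0)\neq0$ at a unit $w_0$, then there are a nonempty compact open $K\subseteq H^{(0)}$ and $c=f(w_0)\neq0$ with $1_K*f*1_K=c1_K$. To prove it I would shrink to a compact open $K_0\ni w_0$ on which the diagonal part $f|_{H^{(0)}}$ is constantly $c$, cover the off-diagonal support $\supp(f-f|_{H^{(0)}})\cap H|_{K_0}$ by finitely many compact open bisections $B_1,\dots,B_n\subseteq H\setminus H^{(0)}$, and kill them one at a time: given $B_j$ and the current compact open $K'$, either $B_j\cap H|_{K'}=\emptyset$ already, or effectiveness yields $\beta\in B_j\cap H|_{K'}$ with $s(\beta)\neq r(\beta)$, and then a small compact open $K''\ni s(\beta)$ inside $K'$ with $r\circ(s|_{B_j})^{-1}(K'')\cap K''=\emptyset$ removes $B_j$. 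This uses only the property ``a compact open bisection of non-units meets $H\setminus\iso(H)$'', so it needs no second countability. Granting this, suppose some basic $I$ has $f\in I$ with $s(\gamma_0)=u\notin U_I$ for some $\gamma_0\in\supp f$. Replacing $f$ by $1_{B^{-1}}*f$ for a compact open bisection $B\ni\gamma_0$ gives $g\in I$ with $g(u)\neq0$. Put $Z:=\go\setminus U_I$, a closed invariant set with $u\in Z$; by strong effectiveness $G_Z$ is effective, and the restriction $\pi_Z\colon A_R(G)\to A_R(G_Z)$ is a homomorphism (invariance of $Z$ makes restriction multiplicative). Applying the compression lemma to $\pi_Z(g)$ produces a nonempty compact open $K\subseteq Z$ and $c\neq0$ with $c1_K^Z=\pi_Z(1_{\tilde K}*g*1_{\tilde K})$, where $\tilde K\subseteq\go$ is compact open with $\tilde K\cap Z=K$. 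Set $f':=1_{\tilde K}*g*1_{\tilde K}\in I$, supported in $G|_{\tilde K}$. Its off-diagonal part $h':=f'-f'|_{\go}$ vanishes on $G_Z$, so $s(\supp h')\subseteq U_I$ and hence $h'\in I_{U_I}\subseteq I$; therefore $f'|_{\go}=f'-h'\in I$. This diagonal element equals $c$ on $K$, so for $w_1\in K$ and a compact open $K''\ni w_1$ in $\go$ on which $f'|_{\go}\equiv c$ we get $c1_{K''}=1_{K''}*f'|_{\go}\in I$. Basicness forces $1_{K''}\in I$, so $w_1\in U_I$; but $w_1\in K\subseteq Z=\go\setminus U_I$, a contradiction. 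Hence $I\subseteq I_{U_I}$.

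For the converse I would argue contrapositively and localise. Fix a nonempty closed invariant $V$ with $G_V$ \emph{not} effective; I must produce a basic ideal of $A_R(G)$ not of the form $I_U$. With $W:=\go\setminus V$ the kernel of $\pi_V$ is exactly $I_W$, so $A_R(G)/I_W\cong A_R(G_V)$. If $J'$ is any basic ideal of $A_R(G_V)$ then $J:=\pi_V^{-1}(J')$ is a basic ideal of $A_R(G)$ (if $r1_K\in J$ then either $K\cap V=\emptyset$ and $1_K\in I_W\subseteq J$, or $\pi_V(r1_K)=r1_{K\cap V}^{V}\in J'$ and basicness of $J'$ gives $1_{K\cap V}^{V}=\pi_V(1_K)\in J'$, i.e.\ $1_K\in J$), and $\pi_V(I_U)=I^{G_V}_{U\cap V}$. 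Thus if every basic ideal of $A_R(G)$ were some $I_U$, the same would hold for $A_R(G_V)$; so it suffices to show that a non-effective ample Hausdorff $H$ has a basic ideal that is not $I^H_{W'}$ for any open invariant $W'$. Here I would take a nonempty compact open bisection $B\subseteq\iso(H)\setminus H^{(0)}$ with $D:=s(B)=r(B)$, and form the augmentation-type basic ideal $J$ generated by $1_B-1_D$. The point is that $1_B-1_D$ is nonzero while, by an augmentation character on the isotropy over $D$ (modelled on $g\mapsto1$ for the group algebra $R[\Gamma]$, which exhibits the augmentation ideal as a proper nonzero basic ideal), no $1_K$ with $\emptyset\neq K\subseteq H^{(0)}$ lies in $J$; thus $U_J=\emptyset$ and $J\supsetneq I^H_\emptyset=\{0\}=I^H_{U_J}$, so $J$ is not of the form $I^H_{W'}$.

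I expect the two genuinely technical points to be, first, the compression lemma in the forward direction, specifically carrying it out with the bare effectiveness property rather than topological principality; the apparent circularity---that one cannot simply compress $g$ at $u$ because $u$ may carry isotropy meeting $\supp g$---is broken by passing to the reduction $G_Z$ and peeling off the ``good'' off-diagonal part using the already-established inclusion $I_{U_I}\subseteq I$. Second, in the converse, the delicate verification is that the augmentation-type ideal $J$ is basic and contains no $1_K$ with $K$ a nonempty compact open subset of the unit space; this is where the failure of effectiveness is converted into a genuinely new basic ideal, and where one must check that the group-algebra augmentation phenomenon survives in the fibred setting over $D$.
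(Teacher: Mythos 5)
Your proposal is correct and follows the same overall architecture as the paper's proof: first establish that $U\mapsto I_U$ is always an injective lattice morphism with $I_{U_I}\subseteq I$ for every basic ideal $I$, then prove $I\subseteq I_{U_I}$ using effectiveness of the reduction to $Z=\go\setminus U_I$, and for the converse localise to a closed invariant $V$ with $G_V$ not effective and pull back a basic ideal of $A_R(G_V)$ meeting the diagonal trivially. The differences are in packaging rather than substance. In the forward direction the paper passes to the quotient $A_R(G)/I_U\cong A_R(G_D)$ and invokes Steinberg's result (\cite[Proposition~3.3]{St2}) that every nonzero ideal in the Steinberg algebra of an effective ample groupoid meets $A_R(\go)$ nontrivially; you instead prove the underlying compression lemma directly and apply it elementwise in $G_Z$, using the already-established inclusion $I_{U_I}\subseteq I$ to peel off the off-diagonal part $h'$ supported over $U_I$. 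Your peeling step checks out: since $\pi_Z(f')=c1_K$ with $K\subseteq Z$, the function $h'=f'-f'|_{\go}$ does vanish on all of $G_Z$, so $\supp h'\subseteq s^{-1}(U_I)$ and $h'\in I_{U_I}\subseteq I$. This buys you a self-contained argument at the cost of reproving the uniqueness-type lemma the paper cites. In the converse, the ``augmentation character on the isotropy over $D$'' that you flag as the delicate point is realised in the paper as the representation $\pi(f)\delta_u=\sum_{\gamma\in G_u}f(\gamma)\delta_{r(\gamma)}$ on the free module $\mathbb{F}_R(\go)$ (multiplicativity comes from \cite[Proposition~4.2]{CE}); its kernel contains $1_B-1_{s(B)}$ and intersects $A_R(\go)$ in $\{0\}$, which gives exactly your claim that the ideal generated by $1_B-1_D$ contains no nonzero $r1_K$, hence is vacuously basic with $U_J=\emptyset$. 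The only step you should spell out is the identity $\pi_V(I_U)=I^{G_V}_{U\cap V}$ needed for your two-step reduction; the nontrivial inclusion follows from surjectivity of the restriction map exactly as in the paper's exact-sequence lemma, so nothing is at risk.
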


Before proving Theorem~\ref{thm:basicideal}, we need to establish some helper results.

\begin{lemma}
\label{lem:IU} Let $G$ be an ample Hausdorff groupoid,  let  R be a commutative ring with identity and let $U \subseteq \go$ be an
open invariant subset. Then $I_U$ is a basic ideal in $A_R(G)$.
\end{lemma}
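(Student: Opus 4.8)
The plan is to verify in turn that $I_U$ is an $R$-submodule of $A_R(G)$, that it is a two-sided ideal, and finally that it is basic. The submodule property is immediate from the fact that $\supp$ agrees with the nonzero set: if $f,g \in I_U$ and $t \in R$, then $\supp(f+g) \subseteq \supp(f) \cup \supp(g) \subseteq G_U$ and $\supp(tf) \subseteq \supp(f) \subseteq G_U$, so $I_U$ is closed under addition and scalar multiplication.

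For the ideal property, the key observation is that $G_U$ absorbs multiplication by $G$ on either side. Since $\supp(f * g) \subseteq \supp(f)\supp(g)$, it will suffice to show $G_U G \subseteq G_U$ and $G G_U \subseteq G_U$; then for $f \in I_U$ and $g \in A_R(G)$ we obtain $\supp(f*g) \subseteq G_U G \subseteq G_U$ and $\supp(g*f) \subseteq G G_U \subseteq G_U$, so both $f*g$ and $g*f$ lie in $I_U$. The inclusion $G G_U \subseteq G_U$ is automatic from $G_U = s^{-1}(U)$: for a composable product $\eta\gamma$ with $\gamma \in G_U$ we have $s(\eta\gamma) = s(\gamma) \in U$. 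The inclusion $G_U G \subseteq G_U$ is where invariance of $U$ enters: for a composable product $\gamma\eta$ with $\gamma \in G_U$ we have $r(\eta) = s(\gamma) \in U$, and invariance in the form $s(r^{-1}(U)) = U$ forces $s(\eta) \in U$, whence $s(\gamma\eta) = s(\eta) \in U$ and $\gamma\eta \in G_U$.

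Finally, for the basic property, suppose $K \subseteq \go$ is compact open, $0 \neq t \in R$, and $t 1_K \in I_U$. Because $t \neq 0$, the function $t 1_K$ takes the nonzero value $t$ exactly on $K$, so $\supp(t 1_K) = K$; likewise $\supp(1_K) = K$ (note $t\neq 0$ forces $R\neq 0$, hence $1 \neq 0$). Thus $t 1_K \in I_U$ gives $K = \supp(t1_K) \subseteq G_U$, and therefore $\supp(1_K) = K \subseteq G_U$, i.e.\ $1_K \in I_U$. This shows $I_U$ is a basic ideal.

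I expect no serious obstacle here: the argument is essentially bookkeeping with supports built on the containment $\supp(f * g) \subseteq \supp(f)\supp(g)$, and the only substantive point is the one-sided use of invariance in proving $G_U G \subseteq G_U$. The two details worth double-checking are that the two absorption inclusions genuinely require different arguments (only $G_U G \subseteq G_U$ uses invariance, while $G G_U \subseteq G_U$ follows from the description $G_U = s^{-1}(U)$ alone), and that the basic property relies on $t \neq 0$ to identify $\supp(t 1_K)$ with $K$.
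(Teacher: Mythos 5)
Your proof is correct and takes essentially the same approach as the paper: the paper establishes the ideal property by computing $(f*g)(\alpha)$ pointwise for $\alpha \notin G_U$ and invoking invariance of $U$, which is exactly the content of your absorption inclusion $G_U G \subseteq G_U$ repackaged through the stated containment $\supp(f*g)\subseteq\supp(f)\supp(g)$. The paper dismisses the basic property as immediate from the definition, which is precisely your observation that $\supp(t1_K)=K=\supp(1_K)$ when $t\neq 0$.
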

\begin{proof}
The set $I_U$ is closed under addition and scalar multiplication. To see that
$I_U$ is an ideal, fix $f \in I_U$ and $g \in A_R(G)$.
Let $\alpha \notin G_U$ and $\beta \in G^{r(\alpha)}$. Then
$s(\beta^{-1}\alpha) = s(\alpha) \not\in U$, and hence $s(\beta) \not\in U$ because $U$
is invariant. Hence $f(\beta) = 0$. Thus
\[
    (f*g)(\alpha) = \sum_{\beta \in G^{r(\alpha)}} f(\beta)g(\beta^{-1}\alpha) = 0.
\]
So $f*g \in I$.  A similar argument gives $g*f \in I$. That $I_U$ is basic follows
immediately from its definition.
\end{proof}

\begin{prop}\label{prop:injective} Let $G$ be an ample Hausdorff groupoid and let  R be a commutative ring with identity. 
Then
$U \mapsto I_U$
is an injective lattice morphism from the   open invariant subsets of $\go$ to the basic ideals of $A_R(G)$. 
\end{prop}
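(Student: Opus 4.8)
The plan is to verify three things beyond well-definedness (which is already supplied by Lemma~\ref{lem:IU}): that the map is order-preserving and order-reflecting (the latter giving injectivity), that it preserves meets, and that it preserves joins. The open invariant subsets of $\go$ form a lattice under $\cup$ and $\cap$, so a lattice morphism must send these to the meet and join of the corresponding basic ideals.

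First I would record order-preservation: if $U \subseteq V$ then $G_U = s^{-1}(U) \subseteq s^{-1}(V) = G_V$, so $\supp f \subseteq G_U$ forces $\supp f \subseteq G_V$, giving $I_U \subseteq I_V$. For the converse direction I would show the map reflects order, which simultaneously yields injectivity. Suppose $I_U \subseteq I_V$ and fix $u \in U$. Since $U$ is open and $G$ is ample, $\go$ has a basis of compact open sets, so there is a compact open $K$ with $u \in K \subseteq U$; then $1_K \in I_U \subseteq I_V$. As $\supp(1_K) = K \subseteq \go$ and $G_V \cap \go = V$, membership $1_K \in I_V$ forces $K \subseteq V$, whence $u \in V$. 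Thus $U \subseteq V$, and taking $I_U = I_V$ gives $U = V$. Together these implications show $U \mapsto I_U$ is an order isomorphism onto its image.

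For meets I would use $G_{U \cap V} = s^{-1}(U \cap V) = G_U \cap G_V$: a function $f$ satisfies $\supp f \subseteq G_U \cap G_V$ exactly when $\supp f \subseteq G_{U \cap V}$, so $I_U \cap I_V = I_{U \cap V}$, and intersection is the meet in both lattices. The substantive step is the join, where I claim $I_{U \cup V} = I_U + I_V$; since the right-hand side is then a basic ideal (being equal to $I_{U \cup V}$, which is basic by Lemma~\ref{lem:IU}) and is the smallest ideal containing $I_U$ and $I_V$, it is the join in the lattice of basic ideals. The inclusion $I_U + I_V \subseteq I_{U \cup V}$ is immediate from order-preservation. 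For the reverse, given $f \in I_{U \cup V}$ I would write $f = \sum_i r_i 1_{B_i}$ with the $B_i$ mutually disjoint compact open bisections, so that $\supp f \subseteq G_U \cup G_V = s^{-1}(U \cup V)$. For each $i$ the set $s(B_i)$ is a compact open subset of $U \cup V$; using the compact open basis of $\go$ I would partition it as $s(B_i) = C_i \sqcup D_i$ with $C_i \subseteq U$ and $D_i \subseteq V$ both compact open, then pull these back through the homeomorphism $s|_{B_i}$ to split $B_i = B_i^U \sqcup B_i^V$ with $B_i^U \subseteq G_U$ and $B_i^V \subseteq G_V$. Setting $g = \sum_i r_i 1_{B_i^U}$ and $h = \sum_i r_i 1_{B_i^V}$ then exhibits $f = g + h$ with $g \in I_U$ and $h \in I_V$.

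The main obstacle is exactly this decomposition: the order statements and the meet are bookkeeping with supports, but splitting an arbitrary $f$ supported in $G_U \cup G_V$ into pieces supported in $G_U$ and in $G_V$ relies essentially on the ample (locally Boolean) structure of $\go$ to cut each bisection's image cleanly along the open cover $\{U,V\}$. I expect the clean way to package the argument is through the compact open partition $s(B_i) = C_i \sqcup D_i$ described above, where the only genuine input is that a compact open subset of $\go$ covered by two open sets admits such a partition.
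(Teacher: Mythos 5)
Your proposal is correct and follows essentially the same route as the paper: order reflection via $1_K$ for a compact open $K\subseteq U$ gives injectivity, the meet is the support bookkeeping $G_U\cap G_V=G_{U\cap V}$, and the join reduces to partitioning a compact open set covered by $\{s^{-1}(U),s^{-1}(V)\}$ into two compact open pieces via a finite subcover from the compact open basis. The only (cosmetic) difference is that you split each $s(B_i)$ inside $\go$ after decomposing $f$ into disjoint indicator functions, whereas the paper performs the same partition argument directly on $\supp(f)\subseteq G$ and multiplies $f$ pointwise by the resulting indicator functions.
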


\begin{proof}
We first prove that for open invariant subsets $U,V$ of $\go$, we have $I_U \subseteq I_V$ if and only if
$U \subseteq V$. Suppose that $I_U \subseteq I_V$, and fix $u \in U$. Choose a compact open neighbourhood $K$ of
$u$ such that $K \subseteq U$. Then $1_K \in I_U \subseteq I_V$, giving $u \in K
\subseteq V$. Hence $U \subseteq V$.  Conversely, if $U\subseteq V$, then $G_U\subseteq G_V$, and hence $I_U\subseteq I_V$.

It follows immediately that $I_U = I_V$ implies $U = V$, so $U \mapsto I_U$ is injective.

We will show that \[I_{U \cap V} = I_U \cap
I_V\quad\text{and}\quad I_{U \cup V} = I_U + I_V\] Since the set of open invariant subsets of $\go$ (with set inclusion, intersection and union) forms a lattice, it will then follow that $\{I_U: \text{$U$ is an open invariant subset of $\go$}\}$ is a lattice  (with set inclusion, intersection and $+$),  and that $U\mapsto I_U$ is a lattice morphism. 

Since $G_U \cap G_V = s^{-1}(U) \cap s^{-1}(V) =
s^{-1}(U \cap V) = G_{U \cap V}$, we have
\[
I_U \cap I_V
    = \{f : \supp(f) \in G_U \cap G_V\}
    = \{f : \supp(f) \in G_{U \cap V}\} = I_{U \cap V}.
\]
If $f \in I_U + I_V$, say $f = f_U + f_V$, then
\[
\supp(f)
    \subseteq \supp(f_U) \cup \supp(f_V)
    \subseteq G_U \cup G_V
    = s^{-1}(U) \cup s^{-1}(V)
    = s^{-1}(U \cup V)
    = G_{U \cup V}.
\]
This gives $I_U + I_V \subseteq I_{U \cup V}$. 

For the reverse containment, suppose that $f \in I_{U \cup V}$. So $\supp(f) \subseteq
s^{-1}(U \cup V)$. The set $K_U := \supp(f) \setminus s^{-1}(V) \subseteq s^{-1}(U)$ is
compact because it is a closed subset of $\supp(f)$, and similarly $K_V := \supp(f)
\setminus s^{-1}(U)$ is a compact subset of $s^{-1}(V)$. Let $u\in K_U$. Since $\supp(f) \cap s^{-1}(U)$ is open,
and since $G$ is ample, we can find a compact open neighbourhood $N_u$ of $u\in N_u \subseteq\supp(f) \cap s^{-1}(U)$. By taking the union of a finite subcover
of the cover $\{N_u : u \in K_U\}$ of $K_U$, we obtain a compact open subset $K'_U$ of
$\supp(f)$ such that $K_U \subseteq K'_U \subseteq s^{-1}(U)$. Let $K'_V := \supp(f)
\setminus K'_U$. Then $\supp(f) = K'_U \sqcup K'_V$ with $K'_U \subseteq s^{-1}(U)$ and
$K'_V \subseteq s^{-1}(V)$. Since $K'_U$ and $K'_V$ are compact and open, we obtain
locally constant functions $f_U$ and $f_V$ by setting $f_U(\gamma) := 1_{K'_U}(\gamma) f(\gamma)$ and
$f_V(\gamma) := 1_{K'_V}(\gamma) f(\gamma)$. By construction, $f_U \in I_U$ and $f_V \in I_V$, and so $f
= f_U + f_V  \in I_U + I_V$. Thus $I_U + I_V = I_{U \cup V}$. As discussed above, this proves the proposition.
\end{proof}

\begin{lemma}
 \label{lem:shouldadone}
Let $G$ be an ample Hausdorff groupoid and let  R be a commutative ring with identity. 
Suppose that $G$ is  not effective. Then there is
a nonzero basic ideal $I$ of $A_R(G)$ such that $I \cap A_R(\go) = \{0\}$.
\end{lemma}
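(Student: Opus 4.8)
The plan is to exploit the failure of effectiveness to produce a single element of $A_R(G)$ that is ``invisible'' on the unit space, to let $I$ be the ideal it generates, and then to verify $I \cap A_R(\go) = \{0\}$ by testing against a family of induced representations. Basicness of $I$ will then come for free.

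First I would fix a witness to non-effectiveness. Since $\interior(\iso(G)) \neq \go$, there is $\gamma_0 \in \interior(\iso(G)) \setminus \go$; ampleness provides a compact open bisection $B_0 \subseteq \interior(\iso(G))$ with $\gamma_0 \in B_0$, and because $\go$ is clopen (open by ampleness, closed by Hausdorffness) the set $B := B_0 \setminus \go$ is a nonempty compact open bisection with $B \subseteq \iso(G)$ and $B \cap \go = \emptyset$. As $B$ lies in the isotropy, $s$ and $r$ agree on $B$, so $K := s(B) = r(B)$ is compact open in $\go$ and each $u \in K$ admits a unique $\beta_u \in B \cap G^u_u$ with $\beta_u \neq u$. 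I then set $g := 1_B - 1_K$, which is nonzero since $B$ and $K$ are disjoint and nonempty, and take $I$ to be the two-sided ideal generated by $g$; this is nonzero because $g \in I$.

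The crux is $I \cap A_R(\go) = \{0\}$, and for this I would build, for each $u \in \go$, the representation $\pi_u$ of $A_R(G)$ induced from the trivial $R[G^u_u]$-module $R$ (each isotropy element acting as $1$). Concretely $\pi_u$ acts on the free $R$-module $V_u$ with basis the cosets $G_u/G^u_u$, written $[\gamma]$ for $\gamma \in G_u$, via $f \cdot [\gamma] = \sum_{\eta : s(\eta) = r(\gamma)} f(\eta)\,[\eta\gamma]$; this descends from the left regular action on the free module over $G_u$ because left and right translations commute, and a routine check shows it respects convolution. The key computation is $\pi_u(g) = 0$: for a basis vector $[\gamma]$ with $r(\gamma) \in K$ one gets $\pi_u(1_B)[\gamma] = [\beta_{r(\gamma)}\gamma]$, and since $\beta_{r(\gamma)}\gamma = \gamma\bigl(\gamma^{-1}\beta_{r(\gamma)}\gamma\bigr)$ with $\gamma^{-1}\beta_{r(\gamma)}\gamma \in G^u_u$, the coset is unchanged, so $\pi_u(1_B)[\gamma] = [\gamma] = \pi_u(1_K)[\gamma]$ (both vanish when $r(\gamma) \notin K$). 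This is precisely where $B \subseteq \iso(G)$ is used. Hence $\pi_u(g) = 0$ for every $u$, so with $\Pi := \bigoplus_{u \in \go} \pi_u$ we have $I \subseteq \ker \Pi$.

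Finally I would note that $\Pi$ is injective on the diagonal: for $f \in A_R(\go)$ one has $\pi_u(f)[\gamma] = f(r(\gamma))[\gamma]$, so evaluating on the basis vector $[u]$ gives $\pi_u(f)[u] = f(u)[u]$, and $\Pi(f) = 0$ forces $f(u) = 0$ for all $u \in \go$, i.e.\ $f = 0$. Thus $I \cap A_R(\go) \subseteq \ker\Pi \cap A_R(\go) = \{0\}$. That $I$ is basic is then automatic: if $r 1_{K'} \in I$ with $K' \subseteq \go$ compact open and $0 \neq r \in R$, then $r 1_{K'} \in I \cap A_R(\go) = \{0\}$ forces $K' = \emptyset$, so $1_{K'} = 0 \in I$. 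I expect the only genuine work to be the coset computation giving $\pi_u(1_B) = \pi_u(1_K)$ and the bookkeeping that each $\pi_u$ is a well-defined representation; everything else is routine.
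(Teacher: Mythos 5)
Your argument is correct and is essentially the paper's proof: the paper's homomorphism $\pi(f)\delta_u = \sum_{\gamma\in G_u} f(\gamma)\delta_{r(\gamma)}$ on the free module $\mathbb{F}_R(\go)$ (quoted from \cite[Proposition~4.2(2)]{CE}) is exactly your direct sum of induced representations under the identification $[\gamma]\mapsto\delta_{r(\gamma)}$ of $G_u/G^u_u$ with the orbit of $u$, and the witness $1_B-1_{s(B)}$ and the diagonal-injectivity check are identical. The only cosmetic differences are that you construct the representation by hand and take $I$ to be the ideal generated by the witness, observing that basicness is automatic from $I\cap A_R(\go)=\{0\}$, whereas the paper takes $I=\ker\pi$ and cites \cite[Lemma~4.5]{CE} for basicness.
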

\begin{proof}
Let $\mathbb{F}_R(\go)$ denote the free $R$-module generated by a copy of $\go$; to
reduce confusion, we shall write $\delta_u$ for the spanning element of
$\mathbb{F}_R(\go)$ corresponding to $u \in \go$. Let
$\operatorname{End}(\mathbb{F}_R(\go))$ denote the $R$-algebra of endomorphisms of
$\mathbb{F}_R(\go)$.  By applying \cite[Proposition~4.2(2)]{CE} to the $G$-invariant set $G^{(0)}$ there is a
homomorphism $\pi :A_R(G) \to \operatorname{End}(\mathbb{F}_R(\go))$ such that
\[
    \pi (f)\delta_u = \sum_{\gamma \in G_u} f(\gamma)\delta_{r(\gamma)}.
\]
Since $G$ is not effective, $\interior(\iso(G))\setminus G^{(0)}$ is nonempty. Since $\interior(\iso(G))\setminus G^{(0)}$ is open, there exists a compact open bisection $B \subseteq \interior(\iso(G) )\setminus \go$. If $u\in s(B)$ then  $\pi (1_B)\delta_u=\delta_u=\pi(1_{s(B)})\delta_u$, and both are $0$ otherwise. Now $0\neq 1_B - 1_{s(B)} \in \ker\pi$.
Thus $\ker\pi$ is a nonzero ideal, and it is basic by \cite[Lemma~4.5]{CE}.

We will show that
$\ker\pi \cap A_R(\go) = \{0\}$, and this proves the lemma.
Let $f \in A_R(\go) \setminus \{0\}$.  Fix $u \in \go$ such that $f(u) \not= 0$. Then 
$0 \not= f(u)\delta_u = \pi (f)\delta_u$, and so $f \notin \ker\pi$. Thus $\ker\pi \cap A_R(\go) = \{0\}$.
\end{proof}

If $G$ is an effective ample Hausdorff groupoid, then every nonzero ideal $I$ of $A_R(G)$ has nonzero intersection with $A_R(\go)$  by \cite[Proposition 3.3]{St2}. Combining this with Lemma~\ref{lem:shouldadone} gives the following corollary.

\begin{cor}
Let $G$ be an ample Hausdorff groupoid and let  R be a commutative ring with identity. Then $G$ is effective if and only if every
nonzero ideal $I$ of $A_R(G)$ has nonzero intersection with $A_R(\go)$.
\end{cor}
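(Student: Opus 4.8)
The plan is to establish the biconditional by treating its two implications separately, since each follows immediately from a result already in hand.

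First I would dispatch the forward direction. Assuming $G$ is effective, I would simply invoke \cite[Proposition~3.3]{St2}, which asserts exactly that every nonzero ideal $I$ of $A_R(G)$ satisfies $I \cap A_R(\go) \neq \{0\}$ when $G$ is effective. No further argument is required; the content lives entirely in that citation.

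For the converse I would argue by contraposition. Suppose $G$ is \emph{not} effective. Then Lemma~\ref{lem:shouldadone} produces a nonzero basic ideal $I$ of $A_R(G)$ with $I \cap A_R(\go) = \{0\}$. This $I$ is a nonzero ideal whose intersection with $A_R(\go)$ is trivial, so the condition ``every nonzero ideal of $A_R(G)$ has nonzero intersection with $A_R(\go)$'' fails. Taking the contrapositive, if that condition holds then $G$ must be effective. Combining the two implications yields the stated equivalence.

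The honest observation is that there is no genuine obstacle here: the hard work has already been carried out in Lemma~\ref{lem:shouldadone}, where the representation $\pi : A_R(G) \to \operatorname{End}(\mathbb{F}_R(\go))$ was built from \cite[Proposition~4.2(2)]{CE} and the witnessing element $1_B - 1_{s(B)} \in \ker\pi$ was exhibited from a compact open bisection $B \subseteq \interior(\iso(G)) \setminus \go$. The only point demanding minor care is aligning the logical directions—noting that the implication ``$G$ not effective $\Rightarrow$ a bad ideal exists'' supplied by the lemma is precisely the contrapositive of the converse implication in the corollary.
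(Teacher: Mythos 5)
Your proof is correct and follows exactly the paper's route: the forward implication is \cite[Proposition~3.3]{St2}, and the converse is the contrapositive of Lemma~\ref{lem:shouldadone}, which is precisely how the paper combines these two results to obtain the corollary.
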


Suppose that $U$ is an open invariant subset of $\go$ and let $D := \go\setminus U$.
Since $U$ is open, there is a function $i_U : A_R(G_U) \to A_R(G)$ such that
\[
i_U(f)(\gamma) =
    \begin{cases}
        f(\gamma) &\text{ if $\gamma \in U$}\\
        0 &\text{ otherwise.}
    \end{cases}
\]
Likewise, since $D$, and hence $G_D$, is closed, restriction of functions gives  a
function $q_U : A_R(G) \to A_R(G_D)$.

\begin{lemma}\label{exact seq} Let $G$ be an ample Hausdorff groupoid and let  R be a commutative ring with identity.
Let $U$ be an open invariant subset of $\go$, and $D:=\go\setminus U$. The functions $i_U:
A_R(G_U)\to A_R(G)$ and $q_U:A_R(G)\to A_R(G_D)$ are $^*$-homomorphisms, and the sequence
\[
    0 \longrightarrow A_R(G_U) \overset{i_U}{\longrightarrow} A_R(G) \overset{q_U}{\longrightarrow} A_R(G_D) \longrightarrow 0
\]
is exact. Further, $I_U = i_U(A_R(G_U))$, and 
\[
I_U = \lsp_R\{1_B : B \subseteq G\text{ is a compact open bisection with } s(B) \subseteq U\}.
\]
\end{lemma}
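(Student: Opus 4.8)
The plan is to verify the two homomorphism properties by direct computation, then read off exactness from them together with the invariance of $U$ and of $D := \go \setminus U$, and finally to identify $\image i_U$ with $I_U$ and deduce the spanning description. Throughout, the workhorse is the elementary fact that for an invariant set $W$ one has $s(\gamma) \in W \iff r(\gamma) \in W$, so that $\gamma \in G_W \iff \gamma^{-1} \in G_W$.

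First I would check that $i_U$ and $q_U$ respect convolution and the involution. For $i_U$ and $f,g \in A_R(G_U)$, any $\beta \in G^{r(\gamma)}$ contributing to $(i_U(f) * i_U(g))(\gamma)$ must lie in $G_U$ (else $i_U(f)(\beta)=0$); then $s(\beta^{-1}\gamma) = s(\gamma)$ forces $\beta^{-1}\gamma \in G_U$ exactly when $\gamma \in G_U$. So the $G$-convolution collapses to the $G_U$-convolution when $\gamma \in G_U$ and vanishes otherwise, matching $i_U(f*g)$. For $q_U$, invariance of $D$ gives that $r(\gamma) \in D$ forces every $\beta \in G^{r(\gamma)}$ to satisfy $s(\beta) \in D$, hence $\beta \in G_D$; thus for $\gamma \in G_D$ the $G$-convolution already runs over $G_D$ and restriction commutes with the product. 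Compatibility with the involution follows from the same source/range symmetry noted above.

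Next I would treat exactness at the outer terms and the identification $I_U = \image i_U$. Injectivity of $i_U$ is immediate since it extends by zero. For the middle term I would establish $\ker q_U = I_U$ and $\image i_U = I_U$ together. If $\supp f \subseteq G_U = s^{-1}(U)$ then $f$ vanishes on $G_D$, so $I_U \subseteq \ker q_U$; conversely, if $f|_{G_D} = 0$ and some $\gamma \in \supp f$ had $s(\gamma) \in D$, then invariance of $D$ would force $r(\gamma) \in D$, whence $\gamma \in G_D$ and $f(\gamma) = 0$, a contradiction, giving $\ker q_U \subseteq I_U$. Since $G_U$ is open, restricting $f \in I_U$ to $G_U$ lands in $A_R(G_U)$ and $i_U(f|_{G_U}) = f$, so $I_U = \image i_U$, which completes exactness in the middle.

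I expect the main obstacle to be surjectivity of $q_U$, since one must lift functions off the closed, and hence non-open, subgroupoid $G_D$. I would reduce to lifting a single generator $1_C$, where $C$ is a compact open bisection of $G_D$. Writing $C = O \cap G_D$ for an open $O \subseteq G$ and using that $G$ is ample, I would cover the compact set $C$ by finitely many compact open bisections of $G$ contained in $O$; their union $B$ is a compact open subset of $G$ (not necessarily a bisection, which is harmless here) with $C \subseteq B \subseteq O$, so $B \cap G_D = C$ and $q_U(1_B) = 1_C$. As these $1_C$ span $A_R(G_D)$, this yields surjectivity. Finally, for the spanning description of $I_U$, I would combine $I_U = \image i_U$ with the fact that $A_R(G_U)$ is spanned by indicators of its compact open bisections, noting that, since $G_U$ is open, these are exactly the compact open bisections $B$ of $G$ with $B \subseteq G_U$, equivalently $s(B) \subseteq U$ (as $B \subseteq s^{-1}(s(B))$); applying $i_U$ gives the asserted generators.
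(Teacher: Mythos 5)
Your proposal is correct and follows essentially the same route as the paper: deduce the homomorphism properties from invariance of $U$ and $D$, identify $\ker q_U = I_U = \image i_U$ by support considerations, lift indicators of compact open subsets of $G_D$ to get surjectivity of $q_U$, and read off the spanning description from $\image i_U$. If anything, your surjectivity step is slightly more careful than the paper's one-line version, since you arrange the finite cover of $C$ inside an open set $O$ with $O \cap G_D = C$, which is exactly what is needed to guarantee that the lifted indicator restricts to $1_C$.
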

\begin{proof}
Since $U$ is invariant, $i_U: A_R(G_U)\to A_R(G)$ is a homomorphism, and since $D$ is
invariant, $q_U$ is also a homomorphism. It is clear that $i_U$ is injective. To see that
$q_U$ is surjective, fix a compact open subset $K$ of $G_D$. Since $K$ is also compact in $G$, and $G$
is ample, we can find a finite cover $\bigcup_{L \in F} L$ of $K$ by mutually
disjoint compact open subsets of $G$. Then $1_K = q_U(\sum_{B \in F} 1_L)$. Since
$A_R(G_V)$ is spanned by the $1_K$ it follows that $q_U$ is surjective. By definition of $i_U$ and $q_U$ it is clear that $\image i_U\subseteq \ker q_U$. For the reverse
containment, take $f \in \ker q_U$. Write $f = \sum_{B \in F} r_B 1_B$ where $F$ is a
collection of mutually disjoint bisections of $G$ and the $r_B$ are all nonzero. Since
$q_U(f) = 0$, each $B \in F$ is contained in $G_U$, and so is a compact open subset of
$G_U$. So we can define $f_0 \in A_R(G_U)$ by $f_0 = \sum_{B \in F} r_B 1_B$, and we have
$i_U(f_0) = f$ by construction. Finally, we have
\begin{align*}
I_U &= i_U(A_R(G_U))
    = \lsp_R\{i_U(1_B): B \text{ is a compact open bisection of } G_U\}\\
    &= \lsp_R\{1_B : B \subseteq G \text{ is a compact open bisection with } s(B) \subseteq U\}.\qedhere
\end{align*}
\end{proof}

\begin{proof}[Proof of Theorem~\ref{thm:basicideal}.]  By Proposition~\ref{prop:injective},
$U \mapsto I_U$ is an injective lattice morphism.  So it remains to prove that $G$ is
strongly effective if and only if  $U \mapsto I_U$ is surjective.

Suppose that $G$ is not strongly effective. There exists a nonempty closed invariant
subset $V$ of $\go$ such that $G_V$ is not effective. By Lemma~\ref{lem:shouldadone},
there is a nonzero basic ideal $I$ of $A_R(G_V)$ which has zero intersection with
$A_R(G_V^{(0)})$. 
Let
\[
    J := \{f \in A_R(G) : f|_{G_V} \in I\}.
\]
If $V=\go$, then $J=I$. If $V\neq \go$, then $I_{\go\setminus V}$ is a nonzero ideal of $A_R(G)$ contained in $J$. In either case, $J$ is a nonzero ideal of $A_R(G)$.

To see that $J$ is a  basic ideal, suppose that $K
\subseteq \go$ is a compact open subset of $\go$ and $0\neq r\in R$ with $r 1_K \in J$.
Then $r 1_K|_{G_V}=r 1_{K \cap V} \in I$. Since $I$ is basic, $1_K|_{G_V}=1_{K \cap V}
\in I$, and hence $1_{K} \in J$. Thus $J$ is basic.

To see that $J$ is not of the form $I_U$, fix a nonempty open invariant $U \subseteq
\go$. First suppose that $U \cap V = \emptyset$. Fix a nonzero element $g \in I$.
Lemma~\ref{exact seq} shows that $q_V : A_R(G) \to A_R(G_V)$ is surjective, so there
exists $f \in A_R(G)$ such that $f|_{G_V} = g$, and so $f \in J$. Since $U \cap V =
\emptyset$, we have $h|_{G_V} = 0$ for all $h \in I_U$, and we conclude that $f \in J
\setminus I_U$. On the other hand, if $U \cap V \not= \emptyset$, then any nonempty
compact open subset $K \subseteq U \cap V$ satisfies $1_K\in I_U \cap A_R(G_V^{(0)})$.
Since $J \cap A_R(G_V^{(0)}) = \{0\}$, this implies $J \not= I_U$. Thus $U \mapsto I_U$
is not surjective.

Conversely, suppose that $G$ is strongly effective.  We just have to show that $U \mapsto
I_U$ is surjective. Let $I$ be a nonzero basic ideal in $A_R(G)$. Define
\[
    D:= \bigcap_{f \in I, \supp f \subseteq \go} f^{-1}(0)
        \quad \text{ and } \quad
    U:= \go \setminus D = \bigcup_{f \in I, \supp f \subseteq \go} f^{-1}(R \setminus \{0\}).
\]
We will show that $U$ is an open invariant subset of $\go$ and that $I = I_U$.

To see that $U$ is invariant, let $u\in U$ and choose $\gamma$ such that $s(\gamma)=u$.
We must show that $r(\gamma) \in U$. Fix $f \in  I$ such that $\supp f \subseteq \go$ and
$f(s(\gamma)) \neq 0$. Let $B$ be a compact open bisection containing $\gamma$. A
calculation shows that
\begin{align*}
 1_B*f*1_{B^{-1}}(\zeta)
 &=\sum_{\{\eta\in B: r(\eta)=r(\zeta)\}} f(s(\gamma))1_{B^{-1}}(\eta^{-1}\zeta)\\
 &=\begin{cases}
 f(s(\eta))&\text{if there exists unique $\eta\in B$ such that $\zeta=r(\eta)$}\\
 0&\text{else}.
 \end{cases}
\end{align*}
In particular, $1_B*f*1_{B^{-1}}(r(\gamma))=f(s(\gamma))\neq 0$ and $\supp(
1_B*f*1_{B^{-1}})\subseteq\go$. Thus $r(\gamma)\notin D$, and hence $r(\gamma)\in U$.
Thus $U$ is invariant, and it is open because it is a union of open sets $f^{-1}(R
\setminus \{0\})$.

Now we will show that $I=I_U$. For the $\subseteq$ direction, recall from
Lemma~\ref{exact seq} that $I_U = \ker q_U$, and so $q_U$ induces an isomorphism $\tilde
q_U:A_R(G)/I_U\to A_R(G_D)$. By definition of $I_U$, we have $I \cap A_R(\go) = \{f \in
A_R(\go) : \supp(f) \subseteq U\} = I_U \cap A_R(\go)$. Thus
\begin{align*}
\tilde q_U(I+I_U)\cap A_R((G_D)^{(0)})
&=\tilde q_U\big( (I\cap A_R(\go) )+I_U\big)\\
&=q_U(I \cap A_R(\go))\\
&=q_U(I_U \cap A_R(\go))=\{0\}.
\end{align*}
Since $G$ is strongly effective, $G_D$ is effective, and \cite[Proposition 3.3]{St2}
gives $\tilde{q}_U(I+I_U) = \{0\}$. Thus $I \subseteq  I_U$.

For the $\supseteq$ direction, we first claim that if $B$ is a compact open bisection and
$s(B) \subseteq U$, then $1_{s(B)} \in I$. To see this, observe that $s(B)$ is a compact
open subset of $U$. By definition of $U$, for each $u \in U$ there is an element $f_u \in
I$ such that $\supp(f_u) \subseteq \go$ and $f_u(u) \not= 0$. This $f_u$ is locally
constant, so $V_u := f^{-1}(f(u))$ is a compact open neighbourhood of $u$ in $\go$ and
$f|_{V_u} = f(u)1_{V_u}$. Since $I$ is an ideal, we deduce that $f(u)1_{V_u} = f *
1_{V_u}$ belongs to $I$. Since $I$ is a basic ideal, we deduce that $1_{V_u} \in I$. Now
the $V_u$ cover $s(B)$, which is compact, so we can write $s(B)$ as a finite union $s(B)
= V_{u_1} \cup \dots \cup V_{u_n}$. Putting $W_i := V_{u_i} \setminus \bigcup^{i-1}_{j=1}
V_{u_j}$ we obtain pairwise disjoint compact open sets that cover $s(B)$, and each
$1_{W_i} = 1_{W_i} * 1_{V_{u_i}} \in I$ because $I$ is an ideal. Thus $1_{s(B)} = \sum
1_{W_i} \in I$ as claimed. So the final statement of Lemma~\ref{exact seq} implies that
$I_U \subseteq I$. So $I = I_U$ and hence the map $U\mapsto I_U$ is surjective.
\end{proof}

In the situation where $R=\mathbb{F}$ is a field, all ideals are basic and
the following is immediate.

\begin{cor}\label{cor:field}  Let $G$ be an ample Hausdorff groupoid and let  $\mathbb{F}$ be a  field. Suppose that $G$ is strongly
effective.  Then 
\[
    U \mapsto I_U:=\{f \in A_F(G) : \supp f \subseteq  G_U\}
\]
is an isomorphism from the lattice of open invariant subsets of $\go$ onto the lattice of ideals of $A_{\mathbb{F}}(G)$.
\end{cor}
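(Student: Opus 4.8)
The plan is to show that when the coefficient ring is a field $\mathbb{F}$, every ideal of $A_{\mathbb{F}}(G)$ is automatically basic, and then to invoke Theorem~\ref{thm:basicideal} directly with no further work.

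First I would verify the claim that every ideal is basic. Let $I$ be an ideal of $A_{\mathbb{F}}(G)$, let $K$ be a compact open subset of $\go$, and suppose that $r1_K \in I$ for some nonzero $r \in \mathbb{F}$. Since $\mathbb{F}$ is a field, $r$ is invertible, and since $I$ is an ideal of the $\mathbb{F}$-algebra $A_{\mathbb{F}}(G)$ it is closed under multiplication by scalars; hence $1_K = r^{-1}(r1_K) \in I$. This is precisely the defining condition for $I$ to be basic, so the basic ideals of $A_{\mathbb{F}}(G)$ coincide with the full collection of its ideals.

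With this identification in hand, the corollary follows at once from Theorem~\ref{thm:basicideal}. Since $G$ is strongly effective, that theorem asserts that $U \mapsto I_U$ is a lattice isomorphism from the open invariant subsets of $\go$ onto the basic ideals of $A_{\mathbb{F}}(G)$; by the previous paragraph the basic ideals are exactly the ideals. Thus $U \mapsto I_U$ is a lattice isomorphism onto the full lattice of ideals, as required. There is no genuine obstacle here: the only point requiring attention is the elementary observation that invertibility of nonzero scalars over a field forces every ideal to be basic, after which the result is a direct specialisation of Theorem~\ref{thm:basicideal}.
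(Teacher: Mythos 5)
Your proposal is correct and follows exactly the paper's route: the paper dismisses this corollary with the remark that over a field all ideals are basic, so the statement is immediate from Theorem~\ref{thm:basicideal}. You have simply spelled out the one-line verification (invertibility of nonzero scalars forces $1_K = r^{-1}(r1_K) \in I$) that the paper leaves implicit.
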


If in addition the  groupoid  $G$ is second countable and amenable, then
\cite[Corollary~5.9]{BCFS} shows that there is a lattice isomorphism between the open
invariant subsets of $\go$ and the closed ideals of the $C^*$-algebra $C^*(G) = C^*_r(G)$.  Combining
this with Corollary~\ref{cor:field} gives the following.

\begin{cor}
Let $G$ be an ample Hausdorff groupoid. 
 Suppose that $G$ is second countable, amenable and strongly effective.
If we regard $A_{\C}(G)$ as a $^*$-subalgebra of $C^*(G)$, then the closure
    operation is a lattice isomorphism from the ideals of $A_{\C}(G)$ to the closed
    ideals of $C^*(G)$.
\end{cor}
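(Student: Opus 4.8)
The plan is to realise the closure operation as the composite of two lattice isomorphisms already at our disposal, and thereby deduce that it is itself a lattice isomorphism. By Corollary~\ref{cor:field} (with $\mathbb{F} = \C$), the map $\Phi \colon U \mapsto I_U$ is a lattice isomorphism from the open invariant subsets of $\go$ onto the ideals of $A_\C(G)$. By \cite[Corollary~5.9]{BCFS}, which applies because $G$ is second countable and amenable, there is a lattice isomorphism $\Psi$ from the open invariant subsets of $\go$ onto the closed ideals of $C^*(G)$. First I would observe that, since $A_\C(G)$ is dense in $C^*(G)$ and multiplication in $C^*(G)$ is norm-continuous, the closure $\overline{I}$ of any ideal $I$ of $A_\C(G)$ is a closed ideal of $C^*(G)$; thus the closure operation $c \colon I \mapsto \overline{I}$ is genuinely a map from the ideals of $A_\C(G)$ into the closed ideals of $C^*(G)$.

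The crux is to show that $c \circ \Phi = \Psi$, that is, that $\overline{I_U} = \Psi(U)$ for every open invariant $U$. Here I would use the $C^*$-algebraic analogue of Lemma~\ref{exact seq}: since $G$ is amenable, the sequence $0 \to C^*(G_U) \to C^*(G) \to C^*(G_D) \to 0$ is exact, realising $\Psi(U)$ as the image of $C^*(G_U)$ in $C^*(G)$ under the extension-by-zero inclusion; this is the closed ideal associated to $U$ in \cite[Corollary~5.9]{BCFS}. On the algebraic side, Lemma~\ref{exact seq} gives $I_U = i_U(A_\C(G_U))$, and $i_U$ is precisely the restriction to $A_\C(G_U)$ of this $C^*$-level inclusion. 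Since $A_\C(G_U)$ is dense in $C^*(G_U)$ and $C^*(G_U)$ is closed in $C^*(G)$, taking closures in $C^*(G)$ yields $\overline{I_U} = \overline{A_\C(G_U)} = C^*(G_U) = \Psi(U)$.

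With the identity $c \circ \Phi = \Psi$ established, the conclusion is immediate: as $\Phi$ and $\Psi$ are lattice isomorphisms, $c = \Psi \circ \Phi^{-1}$ is a composite of lattice isomorphisms, hence a lattice isomorphism from the ideals of $A_\C(G)$ onto the closed ideals of $C^*(G)$. In particular, surjectivity and injectivity of $c$ follow from those of $\Psi$ and $\Phi$, and preservation of meets and joins is inherited from the two factors.

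I expect the main obstacle to lie in the second paragraph, specifically in matching the abstract closed ideal $\Psi(U)$ furnished by \cite[Corollary~5.9]{BCFS} with the concrete ideal $C^*(G_U)$, and in justifying the two supporting facts: that $A_\C(G_U)$ is dense in $C^*(G_U)$, and that the algebraic inclusion $i_U$ agrees with the $C^*$-level extension-by-zero map. These are standard in groupoid $C^*$-algebra theory, but they must be invoked with care, since the whole argument rests on the two parameterisations of ideals by open invariant sets being the \emph{same} parameterisation.
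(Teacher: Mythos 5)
Your proposal is correct and follows essentially the same route as the paper, which simply combines Corollary~\ref{cor:field} with \cite[Corollary~5.9]{BCFS}; the paper leaves the identification $\overline{I_U}=\Psi(U)$ implicit, whereas you spell it out via density of $A_{\C}(G_U)$ in $C^*(G_U)$ and the compatibility of the two inclusions. No gaps.
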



\section{Nonbasic ideal structure}\label{sec:nonbasic}

\begin{prop}
\label{prop:ideals} Let $G$ be an ample Hausdorff groupoid and let $R$ be a commutative ring with identity. Suppose that $G$ is  strongly effective, and let
$I$ be an ideal in $A_R(G)$.  Then
\[
    I = \lsp\{r1_B:B \text{ is a compact open bisection and } r1_{s(B)} \in I\}.
\]
\end{prop}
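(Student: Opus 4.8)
The plan is to prove the two containments separately, with the reverse (spanning) inclusion carrying all the weight.

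The inclusion $\supseteq$ is a one-line computation: if $B$ is a compact open bisection with $r1_{s(B)}\in I$, then since $1_B*1_{s(B)}=1_B$ we have $r1_B=1_B*(r1_{s(B)})\in I$ because $I$ is an ideal, so every spanning generator on the right-hand side lies in $I$. For the inclusion $\subseteq$, I would fix $f\in I$ and write $f=\sum_{B\in F}r_B1_B$ with $F$ a finite collection of mutually disjoint compact open bisections and each $r_B\neq 0$, using the standard disjointification from the proof of Lemma~\ref{exact seq}. Since $r_B1_B=1_B*(r_B1_{s(B)})$ once we know $r_B1_{s(B)}\in I$, it is enough to show $r_B1_{s(B)}\in I$ for each $B\in F$. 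Fixing $B_0\in F$ and computing on units, one finds $(1_{B_0^{-1}}*f)(u)=r_{B_0}$ for $u\in r(B_0)$ and $0$ for other $u\in\go$; that is, $g:=1_{B_0^{-1}}*f\in I$ satisfies $g|_{\go}=r_{B_0}1_{r(B_0)}$. Since $1_{B_0^{-1}}*(r_{B_0}1_{r(B_0)})*1_{B_0}=r_{B_0}1_{s(B_0)}$, the whole proposition reduces to the following diagonal lemma: \emph{if $G$ is strongly effective and $g\in I$, then $g|_{\go}\in I$.}

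The diagonal lemma is where strong effectiveness does the work. Write $d:=g|_{\go}$ and $h:=g-d$, so that $\supp h\subseteq G\setminus\go$ is compact. Over units above which $h$ carries no isotropy, the standard compression argument available for effective groupoids lets one excise $h$ and recover scalar multiples of indicator functions built from $d$ inside $I$; this is the intersection property \cite[Proposition~3.3]{St2}. The genuine difficulty is the isotropy inside $\supp h$, since no compression $1_W*(\,\cdot\,)*1_W$ can remove an isotropy element sitting above a unit of $W$. To deal with it I would argue by descent on closed invariant sets. Let $U:=\{u\in\go : d\cdot 1_W\in I\text{ for some compact open }W\ni u\}$; this set is open, and verifying that it is invariant uses the transport computation $1_B*(\,\cdot\,)*1_{B^{-1}}$ exactly as in the invariance argument inside the proof of Theorem~\ref{thm:basicideal}. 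It then suffices to show that $d$ is supported in $U$, since $d$ would then be a finite sum of compressions lying in $I$. If instead the support of $d$ is not contained in $U$, then $C:=\go\setminus U$ is a nonempty closed invariant set meeting the support of $d$, so by strong effectiveness $G_C$ is effective, and the exact sequence $0\to A_R(G_U)\to A_R(G)\xrightarrow{q_U}A_R(G_C)\to 0$ of Lemma~\ref{exact seq} sends $g$ to $q_U(g)\in q_U(I)$ with nonzero diagonal part $q_U(d)$. Applying \cite[Proposition~3.3]{St2} to the effective groupoid $G_C$ produces a scalar multiple of an indicator over $C$ inside $q_U(I)$, which I would then lift through $q_U$, using $\ker q_U=I_U$ from Lemma~\ref{exact seq}, to enlarge $U$ and contradict its maximality.

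The main obstacle is precisely this lifting step. Effectiveness by itself is not enough, since a single isotropy element above a unit cannot be compressed away, and it is exactly the hypothesis that \emph{every} reduction $G_V$ is effective that lets the argument descend to the closed invariant set $C$ and clean the diagonal there. The delicate point is that a diagonal element of $q_U(I)$ obtained over $C$ need not lift to a diagonal element of $I$, because $I_U=\ker q_U$ is not known to be contained in $I$; making this lift rigorous (and confirming that $U$ really is invariant and maximal with the stated property) is where the care is needed, and it is the step I expect to be the hardest.
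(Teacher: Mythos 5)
Your setup matches the paper's: the containment $\supseteq$ is just $r1_B = 1_B*(r1_{s(B)})$, and the containment $\subseteq$ reduces, via the computation $(1_{B_0^{-1}}*f)|_{\go} = r_{B_0}1_{s(B_0)}$ (note: $s(B_0)$, not $r(B_0)$, but this is harmless), to the key lemma that $g\in I$ implies $g|_{\go}\in I$, which is exactly the paper's Lemma~\ref{lem:mainone}. The gap is in your proof of that lemma, and it is genuine. First, your set $U=\{u : d1_W\in I \text{ for some compact open } W\ni u\}$ need not be invariant: conjugating $d1_W$ by $1_B*(\,\cdot\,)*1_{B^{-1}}$ produces the function $v\mapsto d\bigl(s((r|_B)^{-1}(v))\bigr)$ on $r(Bs^{-1}(W))$, not $d$ restricted to the transported neighbourhood, so membership of $d1_{W'}$ in $I$ at $r(\gamma)$ does not follow. (The corresponding argument inside Theorem~\ref{thm:basicideal} works because there one only needs \emph{some} diagonal element of $I$ nonvanishing at $r(\gamma)$, and basicness of $I$ is available to normalise coefficients; neither holds here.) Second, and more seriously, the descent-and-lift step does not close: \cite[Proposition~3.3]{St2} applied to the ideal $q_U(I)$ of $A_R(G_C)$ yields only \emph{some} nonzero element of $q_U(I)\cap A_R(C)$, with no relation to $d$, and that element is $q_U(h)$ for an $h\in I$ whose restriction to $G_U$ is uncontrolled, so removing it requires knowing $i_U(h|_{G_U})\in I$, i.e.\ information about $I\cap I_U$ that you do not have. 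You flag this yourself as the hard step, but without it the argument does not produce a single new point of $U$, let alone $\supp(d)\subseteq U$.

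The missing ingredient is the paper's Lemma~\ref{lem:getbisections}: for a compact open bisection $B_0\subseteq G\setminus\go$ with $s(B_0),r(B_0)\subseteq U$, strong effectiveness yields finitely many compact open bisections $N_i$ with ranges in $U$, pairwise disjoint sources partitioning $s(B_0)$, and $N_i^{-1}B_0N_i=\emptyset$. Strong effectiveness enters exactly where you expect it to: the units not covered by the saturations $[V_\gamma]$ of the compressing sets form a closed invariant set, and effectiveness of the reduction to that set forces $B_0$ to miss it, so the compressible locus covers all of $s(B_0)$. With this in hand the paper proves $g|_{\go}\in I$ by induction on the number of off-diagonal bisections in $\supp(g)$: summing the compressions $1_{N_i^{-1}}g1_{N_i}$ replaces $g$ by an element of $I$ of the form $r1_{s(B_0)}+\sum_{B\neq B_0}r_B1_{D_B}$ with strictly fewer off-diagonal terms and the \emph{same} coefficients, so the specific scalars are tracked throughout and no lifting from a quotient is ever needed. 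I recommend abandoning the maximal-invariant-set scheme in favour of this compression-and-induction argument.
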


The proof of this proposition uses two technical lemmas. The first is about compact open
bisections in the complement of the unit space of a strongly effective groupoid. The
second shows that restriction of functions to the unit space in such groupoids respects
ideal structure.

\begin{lemma}
\label{lem:getbisections} Let $G$ be an ample Hausdorff groupoid which is strongly effective.
 Suppose that $U \subseteq \go$ is a compact open set and $B \subseteq G
\setminus \go$ is a compact open bisection such that $s(B),r(B) \subseteq U$.  Then there
is a finite collection of compact open bisections $\{N_1, \dots, N_m\}$ such that
\begin{enumerate}
\item\label{it1:lgb} for each $i$, $r(N_i) \subseteq U$;
\item \label{it2:lgb} for each $i$, $N_i^{-1}BN_i = \emptyset$;
\item\label{it3:lgb} $s(N_i) \cap s(N_j) = \emptyset$ for $i \neq j$; and
\item\label{it4:lgb} $s(B)= \bigsqcup_i s(N_i)$.
\end{enumerate}
\end{lemma}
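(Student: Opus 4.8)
The plan is to reduce condition (2) to a statement about the range sets $r(N_i)$ alone, prove a local version of the lemma at each point of $s(B)$, and then patch together by compactness. First I would record the translation. Writing $\phi_B : s(B) \to r(B)$ for the homeomorphism $u \mapsto r(\beta_u)$ determined by $B$, where $\beta_u$ is the unique element of $B$ with $s(\beta_u)=u$, a direct computation with the product convention $WV=\{\gamma\eta : s(\gamma)=r(\eta)\}$ shows that for any compact open bisection $N$,
\[
 N^{-1}BN = \emptyset \quad\Longleftrightarrow\quad \text{there is no } \beta\in B \text{ with } s(\beta),r(\beta)\in r(N).
\]
Thus (2) depends only on $r(N_i)$, holding precisely when $r(N_i)$ contains no pair $\{x,\phi_B(x)\}$ with $x\in s(B)$.

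Set $F := s(B\cap\iso(G))$, which is exactly the fixed-point set $\{u\in s(B):\phi_B(u)=u\}$; since $\iso(G)$ is closed (as $\go$ is Hausdorff) and $B$ is compact, $F$ is a compact, hence closed, subset of $s(B)$. The key local claim is: for each $u\in s(B)$ there is a compact open bisection $N$ with $u\in s(N)$, $r(N)\subseteq U$ and $N^{-1}BN=\emptyset$. For $u\notin F$ this is easy: since $\phi_B(u)\neq u$, a compactness argument (if not, endpoints of elements of $B$ accumulate at an isotropy element over $u$) produces a compact open neighbourhood $Q\ni u$ in $U$ containing no $\beta\in B$ with $s(\beta),r(\beta)\in Q$, and $N:=Q\subseteq\go$ works. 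For $u\in F$ we must instead move $u$ off $F$ through the groupoid; the claim reduces to finding $\nu\in G$ with $s(\nu)=u$ and $r(\nu)\in U\setminus F$, after which a small compact open bisection $N\ni\nu$ with edge-free range $r(N)\subseteq U\setminus F$ does the job.

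The heart of the proof — and the only place strong effectiveness is used — is this escape statement: for every $u\in F$ the orbit $[\{u\}]=r(s^{-1}(u))$ meets $U\setminus F$. I would argue by contradiction, assuming $[\{u\}]\cap U\subseteq F$. Put $Y:=\overline{[\{u\}]}$, a nonempty closed invariant subset of $\go$ (closures of invariant sets are invariant for ample groupoids, since $r,s$ restrict to homeomorphisms on bisections). Using the elementary fact that $\overline{A}\cap U\subseteq\overline{A\cap U}$ for open $U$, together with $F$ being closed, I would deduce $Y\cap U\subseteq\overline{[\{u\}]\cap U}\subseteq F$. Now consider $B\cap G_Y$: it is open in $G_Y$, nonempty (it contains $\beta_u$, as $u\in Y$), and disjoint from $Y$ because $B\subseteq G\setminus\go$. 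For any $\beta\in B\cap G_Y$ we have $s(\beta)\in s(B)\cap Y\subseteq U\cap Y\subseteq F$, so $\phi_B(s(\beta))=s(\beta)$, i.e.\ $r(\beta)=s(\beta)$; hence $B\cap G_Y\subseteq\iso(G_Y)$. Thus $B\cap G_Y$ is a nonempty open subset of $\iso(G_Y)$ avoiding the unit space, so $\interior(\iso(G_Y))\neq G_Y^{(0)}$, contradicting that $G_Y$ is effective. This establishes the escape statement, and hence the local claim.

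Finally I would globalise. The sets $s(N)$ from the local claim form an open cover of the compact set $s(B)$; choosing a finite subcover $N_1,\dots,N_m$ and replacing $N_i$ by its restriction to the source $\big(s(N_i)\cap s(B)\big)\setminus\bigcup_{j<i}s(N_j)$ gives compact open bisections (the compact open subsets of $\go$ form a Boolean algebra, so these differences are again compact open) that still satisfy \eqref{it1:lgb} and \eqref{it2:lgb}, since restricting the source only shrinks the range and shrinks $N^{-1}BN$; their sources are disjoint and exhaust $s(B)$, giving \eqref{it3:lgb} and \eqref{it4:lgb}. The main obstacle is the escape statement of the preceding paragraph; once it is in hand, the remaining steps are routine manipulations in ample Hausdorff groupoids.
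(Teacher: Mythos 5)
Your proof is correct, but it follows a genuinely different route from the paper's. The paper starts from the non-isotropy elements of $B$: for each $\gamma \in B \setminus \iso(G)$ it invokes \cite[Claim~3.2]{BCFS} to produce a compact open $V_\gamma \subseteq s(B)$ with $V_\gamma B V_\gamma = \emptyset$, applies strong effectiveness once to the single closed invariant set $C = \go \setminus \bigcup_\gamma [V_\gamma]$ to conclude that $B \cap G_C = \emptyset$ (hence the orbits $[V_\gamma]$ cover $s(B)$), and then builds each $M_x$ by transporting $x$ along an element of $V_\gamma G x$, splitting into the cases $r(\eta) \neq x$ and $r(\eta) = x$. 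You instead reformulate condition~(\ref{it2:lgb}) as an ``edge-freeness'' condition on $r(N)$ alone, isolate the compact fixed-point set $F = s(B \cap \iso(G))$ of the partial homeomorphism $\phi_B$, dispose of points of $s(B)\setminus F$ by a direct Hausdorff separation, and apply strong effectiveness to the orbit closure $\overline{[\{u\}]}$ of each individual $u \in F$ to escape from $F$ into $U$. Both arguments use strong effectiveness in the same essential way (a nonempty closed invariant set over which $B$ restricts to a nonempty open subset of the isotropy), but yours is more self-contained --- it does not need the external claim from \cite{BCFS} --- and it makes the role of the hypothesis more transparent: strong effectiveness is exactly what lets you move each ``periodic'' point of $s(B)$ to an aperiodic one inside $U$. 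What the paper's version buys in exchange is that the covering of $s(B)$ by good source sets falls out in one step rather than pointwise. A minor bonus of your write-up: by intersecting with $s(B)$ before disjointifying you guarantee $\bigsqcup_i s(N_i) = s(B)$ exactly, whereas the paper's $M_x$ in the case $r(\eta)\neq x$ need not have $s(M_x) \subseteq s(B)$, so the same intersection is implicitly required there too.
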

\begin{proof} Since $G$ is strongly effective it is effective, and hence $B \setminus \iso(G)\neq\emptyset$.
For each $\gamma \in B \setminus \iso(G)$, we can apply \cite[Claim~3.2]{BCFS} to obtain
a compact open bisection $V_\gamma \subseteq s(B)$ such that $\gamma \in BV_\gamma$ and
$V_\gamma B V_\gamma = \emptyset$. Let
\[
    C:= \go \setminus \bigcup_{\gamma\in B \setminus \iso(G)} [V_{\gamma}].
\]
Since each $[V_\gamma]$ is an open invariant set, $C$ is closed and invariant. We have \[B
\setminus \iso(G) \subseteq \bigcup_{\gamma \in B \setminus \iso(G)} BV_\gamma \subseteq G
\setminus G_C,\] and hence $B \cap G_C\subseteq \iso(G)$. We claim that $B \cap G_C = \emptyset$.  Suppose, aiming for a contradiction, that $B \cap G_C \neq \emptyset$. Then $B\cap G_C$ is a nonempty open compact bisection of $G_C$. Since $G$ is strongly effective,
$G_C$ is effective. Thus $(B \cap G_C )\setminus \iso(G_C)\neq\emptyset$, contradicting that $B \cap G_C\subseteq \iso(G_C)$. Thus $B \cap G_C = \emptyset$. 
Now
\[
B = B \setminus G_C = \bigcup_{\gamma \in B \setminus \iso(G)} B [V_\gamma],
\]
and in particular, $s(B) \subseteq \bigcup_{\gamma \in B} [V_\gamma]$.

Fix $x \in s(B)$. We will construct a compact open bisection $M_x$ such that $x \in
s(M_x)$, $r(M_x) \subseteq U$, and and $M_x^{-1} B M_x = \emptyset$. Choose $\gamma$ with
$x \in [V_\gamma]$ and choose $\eta \in V_\gamma G x$. First suppose that $r(\eta) \not=
x$. 
Since $G$ is ample and  Hausdorff, there exist compact open neighbourhoods $U_x$ of $x$ and $U_{r(\eta)}$ of $r(\eta)$ such that $U_x\cap U_{r(\eta)}=\emptyset$. We have $\eta\in s^{-1}(U_x)\cap r^{-1}(U_{r(\eta)})$. Thus by intersecting an open compact bisection containing $\eta$ with the closed set $s^{-1}(U_x)\cap r^{-1}(U_{r(\eta)})$ we obtain a compact open bisection $M_x$ containing $\eta$ such that $r(M_x) \cap
s(M_x) = \emptyset$. Since $r(\eta) \in V_\gamma$, we can replace $M_x$ with
$V_\gamma M_x$ to obtain $r(M_x) \subseteq V_\gamma \subseteq U$ (the replacement makes the range smaller, so we still have $r(M_x) \cap s(M_x) = \emptyset$).  We then have
\[
M_x^{-1} B M_x = M_x^{-1} V_\gamma B V_\gamma M_x = \emptyset.
\]
Now suppose that $r(\eta) = x$. Then $x = r(\eta) \in V_\gamma$ and so $M_x := BV_\gamma$
satisfies $x \in s(M_x)$, $r(M_x) \subseteq r(B) \subseteq U$, and $r(M_x) \cap s(M_x) =
\emptyset$. Furthermore,
\[
M_x^{-1} B M_x = V_\gamma B^{-1} B BV_\gamma = V_\gamma s(B)BV_\gamma = V_\gamma B V_\gamma = \emptyset.
\]

We have $s(B) \subseteq \bigcup_{x \in s(B)} s(M_x)$, and since $s(B)$ is compact, there
exist $M_1, \dots, M_n \in \{M_x : x \in s(B)\}$ such that $s(B) \subset \bigcup^n_{i=1}
s(M_i)$. For $1 \le i \le n$ let 
\[
N_i := M_i \setminus \big(\bigcup_{j < i} M_i
s(M_j)\big).
\]
These $N_i$ satisfy (\ref{it1:lgb})--(\ref{it3:lgb}) because the $M_i$ do.
Since each $s(N_i) = s(M_i) \setminus \bigcup_{j < i} s(M_j)$, the $s(N_i)$ are mutually
disjoint and $\bigcup_i s(N_i) = \bigcup_i s(M_i) = s(B)$, giving~(\ref{it4:lgb}).
\end{proof}

\begin{lemma}
\label{lem:mainone} Let $G$ be an ample  Hausdorff groupoid which is strongly effective, and let $R$ be a commutative ring with identity. Let
$I$ be an ideal in $A_R(G)$. If $f \in I$, then $f|_{\go} \in I$.
\end{lemma}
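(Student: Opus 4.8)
The plan is to strip the off-diagonal part of $f$ away one compact open bisection at a time, preserving the restriction to $\go$ at every stage, and to invoke Lemma~\ref{lem:getbisections} only to deal with bisections that meet the isotropy. Because $\go$ is open (ampleness) and closed (Hausdorffness) in $G$, both $f|_{\go}=f\cdot 1_{\go}$ and $f\cdot 1_{G\setminus\go}$ belong to $A_R(G)$, so I may write $f=f|_{\go}+\sum_{j=1}^{n}r_j 1_{B_j}$ with the $B_j$ mutually disjoint compact open bisections in $G\setminus\go$. I would induct on $n$, the base case $n=0$ being the assertion $f=f|_{\go}\in I$. For the inductive step it is enough to produce an element $g\in I$ with $g|_{\go}=f|_{\go}$ and with strictly fewer off-diagonal bisections than $f$, since the inductive hypothesis applied to $g$ then gives $f|_{\go}=g|_{\go}\in I$.

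The engine of the step is the following observation, valid whenever some $B:=B_j$ has $s(B)\cap r(B)=\emptyset$. The element $X:=1_{r(B)}*f*1_{s(B)}$ lies in $I$; since $(1_{r(B)}*f*1_{s(B)})(\gamma)=1_{r(B)}(r(\gamma))\,f(\gamma)\,1_{s(B)}(s(\gamma))$, its restriction to $\go$ is $f|_{\go}\cdot 1_{r(B)\cap s(B)}=0$, its $B$-component is the whole of $r_j 1_B$, and its $B_i$-component for $i\neq j$ is $r_i 1_{B_i'}$ for a compact open sub-bisection $B_i'\subseteq B_i$. Thus $g:=f-X$ works. More generally, if an off-diagonal bisection $B$ misses the isotropy, so that $s(\beta)\neq r(\beta)$ for every $\beta$ in the compact set $B$, then Hausdorffness and compactness let me subdivide $B$ into finitely many compact open sub-bisections $C_1,\dots,C_k$ each with disjoint source and range; since the $s(C_l)$ are mutually disjoint, removing $C_1,\dots,C_k$ in turn by the construction above deletes all of $B$ (no $C_l$ being disturbed before it is treated) and only shrinks the remaining $B_i$. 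Hence any off-diagonal bisection disjoint from $\iso(G)$ can be removed outright, and the count drops.

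The remaining, and genuinely hard, case is when every off-diagonal bisection meets $\iso(G)$; then $s(B_j)\cap r(B_j)\neq\emptyset$ for each $j$ and no subdivision can separate source from range along the isotropy, so strong effectiveness must be used. Here I would fix a compact open $U\supseteq\supp(f|_{\go})\cup\bigcup_j(s(B_j)\cup r(B_j))$, choose $B:=B_j$, and apply Lemma~\ref{lem:getbisections} to $B$ and $U$, obtaining compact open bisections $N_1,\dots,N_m$ with $r(N_i)\subseteq U$, $N_i^{-1}BN_i=\emptyset$, pairwise disjoint $s(N_i)$, and $\bigsqcup_i s(N_i)=s(B)$; the construction moreover yields $s(N_i)\cap r(N_i)=\emptyset$. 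The condition $N_i^{-1}BN_i=\emptyset$ says precisely that no arrow of $B$ has both source and range in $r(N_i)$, equivalently $1_{r(N_i)}*1_B*1_{r(N_i)}=0$, so the $N_i$ neutralise $B$ uniformly across a cover $s(B)=\bigsqcup_i s(N_i)$ of its source. The aim is to assemble from these $N_i$ an element of $I$ that deletes the isotropy-meeting bisection $B$ while leaving $f|_{\go}$ untouched, reducing this case to the favourable one.

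I expect this last assembly to be the main obstacle. The defining property of the $N_i$ is that they \emph{annihilate} $B$, which is exactly what prevents the simple trick of the favourable case: there $B$ was isolated by a product $1_{r(B)}*(\cdot)*1_{s(B)}$ that captured $B$ intact, whereas a two-sided conjugation $1_{N_i^{-1}}*f*1_{N_i}$ or a compression $1_{r(N_i)}*f*1_{r(N_i)}$ kills $B$ but also relocates the value of $f|_{\go}$ (moving it between $s(N_i)$ and $r(N_i)$, which are disjoint). The delicate point is therefore to bookkeep, using the disjointness $s(N_i)\cap s(N_k)=\emptyset$ and the covering $\bigsqcup_i s(N_i)=s(B)$, exactly which portions of the diagonal are displaced by these products and to arrange that the displacements cancel, so that the net effect on $f|_{\go}$ is trivial. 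This is where strong effectiveness, rather than mere effectiveness, is essential; and since $R$ is an arbitrary commutative ring there is no averaging available, so the cancellation has to be exact and combinatorial.
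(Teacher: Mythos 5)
Your reduction of the problem to bisections that meet the isotropy is correct and nicely done: the ``engine'' $X:=1_{r(B)}*f*1_{s(B)}$ does capture $r_j1_B$ in full, kills nothing of the diagonal when $r(B)\cap s(B)=\emptyset$, and only shrinks the other $B_i$; and the subdivision argument for a bisection disjoint from $\iso(G)$ is sound because the sources of disjoint sub-bisections of a bisection are automatically disjoint. But the proof is not complete: the case you yourself flag as ``the main obstacle'' --- an off-diagonal bisection meeting $\iso(G)$ --- is exactly the case where strong effectiveness must do the work, and you have not produced the element of $I$ that removes it. Moreover, the strategy you propose for that case (arrange an exact cancellation of the displaced diagonal so that $g|_{\go}=f|_{\go}$) is not the right one and I do not see how to make it work: the conjugation $1_{N_i^{-1}}*f*1_{N_i}$ transports the value of $f$ at $r(\eta)$ to the point $s(\eta)$ for $\eta\in N_i$, so for a general locally constant $f|_{\go}$ the displaced diagonal pieces do not reassemble into anything controllable, and over an arbitrary commutative ring there is no averaging to force cancellation.

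The missing idea is to give up on preserving $f|_{\go}$ and instead first reduce the diagonal part to a \emph{single} term $r1_U$ with $U$ a level set of $f|_{\go}$, by compressing: for $U\in F_0$ one has $1_Uf1_U=r_U1_U+\sum_{B}r_B1_{UBU}\in I$. Then one runs an induction whose hypothesis is: if $r1_U+\sum_{B\in F}r_B1_B\in I$ with $F$ a set of $n$ disjoint compact open bisections in $UGU\setminus\go$, then $r1_U\in I$. In the inductive step, fix $B_0$ and apply Lemma~\ref{lem:getbisections}; since $r(N_i)\subseteq U$ one has $N_i^{-1}UN_i=s(N_i)$, so $h_i:=1_{N_i^{-1}}g1_{N_i}=r1_{s(N_i)}+\sum_{B\ne B_0}r_B1_{N_i^{-1}BN_i}\in I$ --- the diagonal is displaced but, because it was a \emph{constant} $r$ on $U$, it reappears as the same constant $r$ on $s(N_i)$. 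Summing over $i$ and using $\bigsqcup_is(N_i)=s(B_0)$ gives $r1_{s(B_0)}+\sum_{B\ne B_0}r_B1_{D_B}\in I$ with $D_B=\bigsqcup_iN_i^{-1}BN_i$ a compact open bisection in $UGU\setminus\go$; this has $n$ off-diagonal terms, so induction yields $r1_{s(B_0)}\in I$. Doing this for every $B\in H$ and combining with $1_{U\setminus V}g1_{U\setminus V}=r1_{U\setminus V}\in I$, where $V=\bigcup_Bs(B)$, recovers $r1_U\in I$. So the conclusion of the inductive step is a statement about $r1_{s(B_0)}$, not about an element with the same restriction to $\go$ and fewer bisections; your framework needs to be restructured along these lines before the argument closes.
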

\begin{proof}
Fix $f \in I$. Since $\go$ is closed and open, we can write $f = \sum_{V \in F_0} r_V 1_V +
\sum_{C \in F_1} r_C 1_C$,  where $F_0$  and $F_1$ are finite collections of mutually disjoint compact
open bisections in  $\go$  and  $G \setminus \go$ respectively, and the $r_U$ and $r_V$ are all nonzero in $R$. It suffices to show that  $r_V 1_V \in I$ for all $V\in F_0$. Fix $U
\in F_0$. We have
\[
1_U f 1_U =r_U 1_U + \sum_{B \in F_1} r_B 1_{UBU}  \in I,
\]
{and $\{UBU:B\in F_1\}$} is a set of mutually disjoint compact open bisections contained  in $G \setminus \go$. We will show that $r_U1_U \in I$; we will do this by induction. 

Let $n\geq 0$. Our inductive hypothesis is: if $r 1_U + \sum_{B \in F} r_B 1_B\in I$ where $U \subseteq \go$ is compact open and $F$ is a set of $n$ mutually disjoint compact open bisections in $UGU \setminus \go$, then $r1_U\in I$. When $n=0$ the induction hypothesis holds trivially. 

Now let $g \in I$ be of the form
\[
    g=r1_U  + \sum_{B\in H} r_B1_B
\]
where $U \subseteq \go$ is compact open and $H$ is a collection of $n+1$ mutually
disjoint compact open bisections in $UGU \setminus \go$.

Fix $B_0 \in H$.  We will first show that $a1_{s(B_0)} \in I$.  Since $G$ is strongly effective, we apply
Lemma~\ref{lem:getbisections} to $U$ and $B_0$ to obtain compact open bisections $\{N_1,
\dots, N_m\}$ satisfying properties (\ref{it1:lgb})--(\ref{it4:lgb}) of the lemma.
For $0\leq i \leq m$,  we have $s(N_i)=N_i^{-1}UN_i$ and  $N_i^{-1}B_0N_i=\emptyset$ by properties \eqref{it1:lgb} and \eqref{it2:lgb} of Lemma~\ref{lem:getbisections}, respectively. Hence  
\[
h_i:=1_{N_i^{-1}}g1_{N_i} = r1_{s(N_i)}+ \sum_{B\in H, B\neq B_0} r_B1_{N_i^{-1}BN_i} \in I.
\]
By property (\ref{it4:lgb}) of Lemma~\ref{lem:getbisections} we have
 $\sum_{i=1}^m  r1_{s(N_i)} = r1_{s(B_0)}$, and thus
\begin{equation}\label{eq1}
\sum_{i=1}^m  h_i =r1_{s(B_0)} + \sum_{i=1}^m \sum_{B\in H, B \neq B_0} r_B1_{N_i^{-1}BN_i}
    \in I.
\end{equation}
For $i\neq j$, by property \eqref{it3:lgb} of Lemma~\ref{lem:getbisections} we have 
\begin{gather*}
s(N_i^{-1}BN_i) \cap s(N_j^{-1}BN_j) \subseteq s(N_i)\cap s(N_j)= \emptyset,\\
r(N_i^{-1}BN_i) \cap r(N_j^{-1}BN_j)  \subseteq s(N_i)\cap s(N_j)=\emptyset.
\end{gather*}
Hence  $N_i^{-1}BN_i\cap N_j^{-1}BN_j=\emptyset$. For $B\in H$ with $B\neq B_0$ set
\[D_B := \bigsqcup_{i=1}^n N_i^{-1}BN_i.\]
Then each $D_B$ is a compact open bisection contained in $UGU$ because the source and range of the $N_i$ are contained in $U$ by properties  \eqref{it1:lgb}  and \eqref{it4:lgb} of Lemma~\ref{lem:getbisections}.
Hence \eqref{eq1} is
\begin{equation}\label{apply-ih}
 r1_{s(B_0)}+ \sum_{B\in H\setminus\{B_0\}} r_B1_{D_B} \in I.\end{equation}
To apply the inductive hypothesis, we must verify that each $D_B \cap \go = \emptyset$.
Fix $\gamma \in D_B$.  Then $\gamma \in N_i^{-1}BN_i$ for some $i$.  If $r(\gamma) \neq s(\gamma)$, then $\gamma \notin \go$.  So suppose that
$r(\gamma)=s(\gamma)$. Since $N_i$ is a bisection, there is a unique element $\alpha \in
N_i$ such that $s(\alpha)=s(\gamma)$ and $\gamma = \alpha^{-1} \beta \alpha$ where
$\beta$ is the unique element of $B$ with $s(\beta) = r(\alpha)$.  Since $B \cap \go =
\emptyset$, $\beta \notin \go$, and so $\gamma \notin \go$. Thus $D_B \cap \go = \emptyset$.
Now the inductive hypothesis applies to \eqref{apply-ih}, giving $r1_{s(B_0)} \in I$.

Since our choice of $B_0$ was arbitrary, we obtain $r1_{s(B)} \in I$ for every $B \in H$.
We may also assume the collection
$\{s(B)\}_{B \in H}$ is disjoint (by disjointification). So
\[
V:=\bigcup_{B\in H}s(B) \subseteq U
\]
satisfies
\[
    r1_V = \sum_{B \in H} r1_{s(B)} \in I.
\]
Since $s(B) \cap U\setminus V = \emptyset$ for $B \in H$ we have
 $1_{U\setminus V} g 1_{U \setminus V} = r1_{U \setminus V} \in I$.  Thus  $r1_U =
r1_V + r1_{U \setminus V} \in I$ as well.
\end{proof}

\begin{proof}[Proof of Proposition~\ref{prop:ideals}:]
It suffices to show that \[I \subseteq \lsp\{r1_B: \text{$B$ is a compact open bisection and } r1_{s(B)} \in I\}.\] Fix $f \in I$. Since $G$ is strongly effective, 
Lemma~\ref{lem:mainone} implies that $f|_{\go} \in I$, and hence $f - f|_{\go} \in I$. Since $f$ is locally constant,  $A := f(\go) \subseteq R$ is finite. The sets $B_r := f^{-1}(r)\cap \go$, $r \in A$ are
mutually disjoint compact open subsets of $\go$, and we have $f|_{\go} = \sum_{r \in A} r
1_{B_r}$. Each $r 1_{B_r} = 1_{B_r} f|_{\go} \in I$ and we deduce that $f|_{\go} \in
\lsp\{r1_B: r1_{s(B)} \in I\}$.

So it suffices to show that $g := f - f|_{\go} \in \lsp\{r1_B: r1_{s(B)} \in I\}$.
Express $g = \sum_{B \in F} r_B 1_B$ where $F$ is a finite set of mutually disjoint
compact open bisections in $G \setminus \go$. Fix $C \in F$; we just have to
establish that $r_C 1_{s(C)} \in I$. We have
\[
1_{C^{-1}}g = r_{C}1_{s(C)} + \sum_{B \in F \setminus\{C\}} r_{B} 1_{C^{-1}B} \in I.
\]
We claim that for each $B \in F \setminus \{C\}$ we have $C^{-1}B \subseteq G \setminus
\go$. Fix $B \in F \setminus \{C\}$ and $\gamma \in C^{-1}B$. Then
$\gamma=\alpha^{-1}\beta$ for some $\alpha \in C$ and $\beta \in B$.  Since $C \cap B
= \emptyset$, $\alpha \neq \beta$, and so $\gamma \notin \go$. Thus $(1_{C^{-1}}g)|_{\go}=r_C 1_{s(C)}$.  Since $G$ is strongly effective,  Lemma~\ref{lem:mainone} gives $r_{C}1_{s(C)} \in I$ as needed.
\end{proof}

 For any ring $R$, we write 
$
\mathcal{L}(R) := \{I : I\text{ is a two-sided ideal of }R\}
$ for the set of ideals of $R$.  We now state our main theorem.

\begin{thm}
\label{thm:bijection} Let $G$ be an ample Hausdorff groupoid which is strongly effective, and let  R be a commutative ring with identity.  Let $\mathcal{O}$ be the set of all nonempty open invariant subsets of
$\go$, and let $\mathcal{F}$ be the set of all functions $\pi:\mathcal{O} \to
\mathcal{L}(R)$ such that for all $\mathcal{A} \subseteq \mathcal{O}$
\begin{equation}\label{eq:2cain}\textstyle
    \pi(\bigcup_{U \in \mathcal{A}}U) = \bigcap_{U \in \mathcal{A}}\pi(U).
    \end{equation}
There is a bijection $\Gamma:\mathcal{F} \to \mathcal{L}(A_R(G))$ such that
\[
\Gamma(\pi) = \lsp_R\Big(\bigcup_{U \in \mathcal{O}} \{rf : r \in \pi(U), f \in A_R(G), \supp(f) \subseteq G_U\}\Big).
\]
For each $U \in \mathcal{O}$, we have
\[
\pi(U) = \{r \in R : r1_B \in \Gamma(\pi)\text{ for all compact open }B \subseteq U\}.
\]
\end{thm}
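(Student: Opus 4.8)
The plan is to construct an explicit inverse for $\Gamma$. For an ideal $I$ of $A_R(G)$, define $\pi_I:\mathcal{O}\to\mathcal{L}(R)$ by $\pi_I(U):=\{r\in R: r1_B\in I \text{ for every compact open } B\subseteq U\}$. I would then prove four statements: (a) $\Gamma(\pi)$ is a two-sided ideal for every $\pi\in\mathcal{F}$; (b) $\pi_I\in\mathcal{F}$ for every ideal $I$; (c) $\Gamma(\pi_I)=I$; and (d) $\pi_{\Gamma(\pi)}=\pi$. Statements (c) and (d) say that $\Gamma$ and $I\mapsto\pi_I$ are mutually inverse, which gives the asserted bijection, and (d) is precisely the final displayed formula of the theorem. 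Strong effectiveness will enter only through Proposition~\ref{prop:ideals}, in the proof of (c). A small but pivotal device, used in both (b) and (c), is the following propagation observation: if $J$ is any ideal of $A_R(G)$, $t\in R$, and $K\subseteq\go$ is a nonempty compact open set with $t1_K\in J$, then $t1_{K'}\in J$ for every compact open $K'\subseteq[K]$. This holds because for every compact open bisection $B$ a direct computation gives
\[
1_B*(t1_K)*1_{B^{-1}}=t\,1_{K_B},\qquad K_B:=r(B\cap s^{-1}(K))\subseteq[K];
\]
since $G$ is ample the sets $K_B$ cover $[K]=r(s^{-1}(K))$, so a finite subcover and disjointification move $t$ onto any compact open $K'\subseteq[K]$.

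For (a), the only nontrivial point is absorption of products. Because $U$ is invariant one has $\supp(g)\,G_U\subseteq G_U$ and $G_U\,\supp(g)\subseteq G_U$ for every $g\in A_R(G)$, so for a generator $rf$ (with $r\in\pi(U)$ and $\supp f\subseteq G_U$) both $g*(rf)=r(g*f)$ and $(rf)*g=r(f*g)$ are again generators; linearity then shows $\Gamma(\pi)$ is an ideal. For (b), $\pi_I(U)$ is an $R$-submodule of $R$ because $I$ is an $R$-submodule of $A_R(G)$, hence $\pi_I(U)\in\mathcal{L}(R)$. For condition~\eqref{eq:2cain}, the inclusion $\pi_I(\bigcup_{U}U)\subseteq\bigcap_U\pi_I(U)$ is immediate from $U_0\subseteq\bigcup_{U\in\mathcal{A}}U$; for the reverse inclusion, given $r\in\bigcap_U\pi_I(U)$ and a compact open $B\subseteq\bigcup_{U\in\mathcal{A}}U$, I would take a finite subcover, disjointify $B$ into compact open pieces each contained in a single member of $\mathcal{A}$, apply the definition of $\pi_I$ on each piece, and sum to get $r1_B\in I$.

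For (c), the inclusion $\Gamma(\pi_I)\subseteq I$ reduces, via $f\in I_U=\lsp_R\{1_B: s(B)\subseteq U\}$ from Lemma~\ref{exact seq}, to checking that $r1_B\in I$ whenever $r\in\pi_I(U)$ and $s(B)\subseteq U$; this follows from $r1_{s(B)}\in I$ and $r1_B=1_B*(r1_{s(B)})$. For $I\subseteq\Gamma(\pi_I)$ I would invoke Proposition~\ref{prop:ideals}, so that it suffices to show each spanning element $r1_B$ with $r1_{s(B)}\in I$ lies in $\Gamma(\pi_I)$. Taking $U:=[s(B)]\in\mathcal{O}$, the propagation observation yields $r\in\pi_I(U)$, and since $s(B)\subseteq U$ with $U$ invariant we have $B\subseteq G_U$; hence $r1_B$ is a generator of $\Gamma(\pi_I)$.

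Statement (d) carries the real content and is the step I expect to be hardest. The inclusion $\pi(U)\subseteq\pi_{\Gamma(\pi)}(U)$ is trivial, since for $r\in\pi(U)$ and compact open $B\subseteq U$ the element $r1_B$ is literally a generator of $\Gamma(\pi)$. For the reverse inclusion I would first note that \eqref{eq:2cain} forces antitonicity ($U_1\subseteq U_2$ implies $\pi(U_2)\subseteq\pi(U_1)$), and that $U=\bigcup\{[B]:B\subseteq U \text{ nonempty compact open}\}$, so that \eqref{eq:2cain} gives $\pi(U)=\bigcap_{B\subseteq U}\pi([B])$. Thus it suffices to prove the key claim: for every nonempty compact open $K\subseteq\go$,
\[
r1_K\in\Gamma(\pi)\implies r\in\pi([K]).
\]
Granting the claim, any $r\in\pi_{\Gamma(\pi)}(U)$ satisfies $r1_B\in\Gamma(\pi)$, hence $r\in\pi([B])$, for every nonempty compact open $B\subseteq U$, so $r\in\bigcap_{B\subseteq U}\pi([B])=\pi(U)$. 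To prove the claim, write $r1_K=\sum_k r_kf_k$ with $r_k\in\pi(U_k)$ and $\supp f_k\subseteq G_{U_k}$, and restrict this identity to $\go$ (legitimate since $r1_K$ is supported on units), obtaining $r1_K=\sum_k r_k(f_k|_{\go})$ with $\supp(f_k|_{\go})\subseteq U_k$. Fix $u\in K$ and evaluate at $u$: $r=\sum_{k:\,u\in U_k}r_kf_k(u)$. The decisive trick is to set $W_u:=\bigcap_{k:\,u\in U_k}U_k$, a \emph{finite} intersection of open invariant sets containing $u$, hence a member of $\mathcal{O}$; antitonicity gives $\pi(U_k)\subseteq\pi(W_u)$ for each contributing $k$, so each summand $r_kf_k(u)$ lies in $\pi(W_u)$ and therefore $r\in\pi(W_u)$. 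Finally, setting $W:=\bigcup_{u\in K}W_u\in\mathcal{O}$ we have $K\subseteq W$, hence $[K]\subseteq W$, while \eqref{eq:2cain} gives $r\in\pi(W)=\bigcap_{u\in K}\pi(W_u)$ and antitonicity gives $\pi(W)\subseteq\pi([K])$; thus $r\in\pi([K])$. This finite-intersection device, which converts a sum over generators (a priori only landing $r$ in a large sum of ideals) into membership in a single value of $\pi$, is the point I expect to require the most care.
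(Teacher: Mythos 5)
Your proposal is correct and follows essentially the same route as the paper: your steps (b) and (c) reproduce the paper's surjectivity argument (including the conjugation computation $1_B*(t1_K)*1_{B^{-1}} = t1_{r(B\cap s^{-1}(K))}$ and the appeal to Proposition~\ref{prop:ideals}), and your key claim in (d), with the finite-intersection device $W_u = \bigcap_{k:\,u\in U_k} U_k$, is exactly the content of the paper's Lemma~\ref{lem:containment}. The only difference is organizational --- you package the argument as a two-sided inverse $I\mapsto\pi_I$ rather than proving injectivity and surjectivity separately --- which changes nothing of substance.
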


The following observation will be useful a couple of times: suppose that $V,W \in
\mathcal{O}$ with $V \subseteq W$. Then $\mathcal{A} := \{V, W\}$ satisfies $\bigcup_{U
\in \mathcal{A}} U = W$. So if $\pi$ satisfies~\eqref{eq:2cain}, we have $\pi(V) \cap
\pi(W) = \pi(W)$. Hence~\eqref{eq:2cain} implies that
\begin{equation}\label{eq:1cain}
    V \subseteq W \Longrightarrow \pi(W) \subseteq \pi(V)\quad\text{ for all $V,W \in \mathcal{O}$.}
\end{equation}

Before proving the theorem, we establish a lemma.

\begin{lemma}\label{lem:containment}
Resume the notation of Theorem~\ref{thm:bijection}. Let $\pi_1, \pi_2 \in \mathcal{F}$. Then 
\[
\Gamma(\pi_1) \subseteq \Gamma(\pi_2) \ \text{ if and only if }\ 
\pi_1(U) \subseteq \pi_2(U)\text{ for all }U \in \mathcal{O}.
\]
\end{lemma}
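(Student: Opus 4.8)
The plan is to prove Lemma~\ref{lem:containment} by establishing the two directions separately, using the explicit formula for $\Gamma$ together with the recovery formula $\pi(U) = \{r : r1_B \in \Gamma(\pi) \text{ for all compact open } B \subseteq U\}$ stated in Theorem~\ref{thm:bijection}. The reverse implication is the easy one: if $\pi_1(U) \subseteq \pi_2(U)$ for every $U \in \mathcal{O}$, then each generating set $\{rf : r \in \pi_1(U),\ \supp(f) \subseteq G_U\}$ appearing in $\Gamma(\pi_1)$ is contained in the corresponding generating set for $\Gamma(\pi_2)$, simply because $r \in \pi_1(U)$ forces $r \in \pi_2(U)$. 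Taking $R$-linear spans of unions of these sets preserves the inclusion, so $\Gamma(\pi_1) \subseteq \Gamma(\pi_2)$ follows at once.

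For the forward implication, I would argue contrapositively or directly via the recovery formula. Suppose $\Gamma(\pi_1) \subseteq \Gamma(\pi_2)$ and fix $U \in \mathcal{O}$ and $r \in \pi_1(U)$. I want to conclude $r \in \pi_2(U)$. The natural route is to use the recovery formula for $\pi_2$: it suffices to show that $r1_B \in \Gamma(\pi_2)$ for every compact open $B \subseteq U$. First I would check that $r1_B \in \Gamma(\pi_1)$: indeed $1_B$ is a function with $\supp(1_B) = B \subseteq U \subseteq G_U$, so $r1_B$ is precisely one of the generators $rf$ (with $f = 1_B$) defining $\Gamma(\pi_1)$, hence lies in $\Gamma(\pi_1)$. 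By the hypothesis $\Gamma(\pi_1) \subseteq \Gamma(\pi_2)$, we then get $r1_B \in \Gamma(\pi_2)$. Since this holds for all compact open $B \subseteq U$, the recovery formula for $\pi_2$ yields $r \in \pi_2(U)$, as required.

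The step I expect to be the main obstacle is justifying that the recovery formula $\pi(U) = \{r : r1_B \in \Gamma(\pi)\text{ for all compact open }B \subseteq U\}$ genuinely holds, i.e.\ that it can be invoked as an established fact for both $\pi_1$ and $\pi_2$. Within the logical structure of the paper this formula is part of the \emph{conclusion} of Theorem~\ref{thm:bijection}, so the lemma cannot simply cite it as given; I anticipate that the intended reading is that the recovery formula is proved \emph{before} this lemma in the proof of the theorem, or else that one works directly with the span description of $\Gamma$ and verifies the inclusion $r \in \pi_2(U)$ by a separating argument. If the recovery identity is not yet available, the forward direction becomes genuinely delicate: one must show that membership of every $r1_B$ (for $B \subseteq U$ compact open) in the span $\Gamma(\pi_2)$ already forces $r \in \pi_2(U)$, which requires controlling how an element of $\Gamma(\pi_2)$ — a finite $R$-linear combination of terms $r'f$ with $r' \in \pi_2(W)$ for various $W$ — can restrict to $r1_B$ on $G^{(0)}$. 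This is essentially the content of the injectivity half of Theorem~\ref{thm:bijection}, and would use Proposition~\ref{prop:ideals} together with the consistency condition~\eqref{eq:2cain} (and its consequence~\eqref{eq:1cain}) to extract the scalar $r$ on the unit space.

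Accordingly, the cleanest presentation is to treat the reverse direction by the direct span-inclusion argument above, and to handle the forward direction via the recovery formula, being explicit about the fact that $r1_B$ is a legitimate generator of $\Gamma(\pi_1)$ whenever $B \subseteq U$ is compact open. I would organise the writeup as: (1) reverse direction from the span formula in one or two lines; (2) forward direction fixing $r \in \pi_1(U)$, producing $r1_B \in \Gamma(\pi_1) \subseteq \Gamma(\pi_2)$ for all compact open $B \subseteq U$, then applying the recovery formula to land $r \in \pi_2(U)$. The burden of rigor rests on having the recovery formula in hand, so if it is not yet proved at this point in the paper I would instead prove the forward direction by restricting to the unit space and invoking Proposition~\ref{prop:ideals} to isolate the scalar.
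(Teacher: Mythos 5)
Your reverse direction is correct and is exactly the paper's: it is immediate from the span description of $\Gamma$. The forward direction, however, contains a genuine gap. Your primary route invokes the recovery formula $\pi(U) = \{r : r1_B \in \Gamma(\pi)\text{ for all compact open }B \subseteq U\}$, but in the paper that formula is part of the \emph{conclusion} of Theorem~\ref{thm:bijection}, and the injectivity half of that theorem is itself deduced from Lemma~\ref{lem:containment}; citing the recovery formula here is therefore circular. Nor can it be established independently beforehand: for an arbitrary $\pi \in \mathcal{F}$, proving the inclusion $\{r : r1_B \in \Gamma(\pi)\text{ for all compact open }B \subseteq U\} \subseteq \pi(U)$ is exactly as hard as the forward direction of the lemma, so the work is merely relocated, not avoided. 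You correctly sense this danger, but your fallback --- ``restrict to the unit space and invoke Proposition~\ref{prop:ideals} to isolate the scalar'' --- is only a gesture, and Proposition~\ref{prop:ideals} is not the relevant tool: the lemma's proof does not use it, and what must be controlled is not membership in an abstract ideal but how a finite $R$-linear combination of the specific generators of $\Gamma(\pi_2)$ can equal $r1_K$.

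The missing argument, which is the actual content of the paper's proof, runs as follows. Fix $r \in \pi_1(U)$ and $u \in U$, choose a compact open $K \subseteq U$ with $u \in K$, and note that $r1_K \in \Gamma(\pi_1) \subseteq \Gamma(\pi_2)$, so $r1_K = \sum_V r_V f_V$ is a finite sum of generators with $r_V \in \pi_2(V)$ and $\supp(f_V) \subseteq G_V$. Evaluating at $u$ gives $r = \sum_{V \in F} r_V f_V(u)$, where $F$ is the finite set of $V$ with $f_V(u) \neq 0$; each such $V$ contains $u$, so $W_u := \bigcap_{V \in F} V \cap U$ is an open invariant subset of $U$ containing $u$, and the monotonicity consequence~\eqref{eq:1cain} of~\eqref{eq:2cain} places every $r_V$, hence $r$, in the ideal $\pi_2(W_u)$. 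Since $U = \bigcup_{u \in U} W_u$, condition~\eqref{eq:2cain} gives $\pi_2(U) = \bigcap_{u \in U} \pi_2(W_u) \ni r$. This pointwise-evaluation-plus-consistency step is what your proposal leaves unproved.
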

\begin{proof}  Suppose that $\Gamma(\pi_1)\subseteq \Gamma(\pi_2)$. Fix $U \in \mathcal{O}$.
Fix  $r \in \pi_1(U)$. Let $u \in U$ and  let $K$ be a compact open
subset of $U$ with $u\in K$.   Then  $r1_K \in  \Gamma(\pi_1) \subseteq \Gamma(\pi_2)$.
By definition of $\Gamma(\pi_2)$, 
\[
    r1_K = \sum_{V \in \mathcal{O}}r_V f_V,
\]
where $r_V \in \pi_2(V)$ and $\supp(f_V) \subseteq G_V\cap K=V\cap K$.  Let $F$ be the finite collection of $V
\in \mathcal{O}$ such that $f_V(u) \neq 0$. Then $r=\sum_{V \in F} r_Vf_V(u)$. Let
\[
    W_u := \bigcap_{V \in F} V\cap U.
\]
For $V \in F$, we have $W_u \subseteq V$, and so~\eqref{eq:1cain} gives $r_V\in \pi_2(V) \subseteq
\pi_2(W_u)$. Since $\pi_2(W_u)$ is an ideal, it follows that $r=\sum_{V \in F} r_Vf_V(u) \in \pi_2(W_u)$. 
 Now~\eqref{eq:2cain} gives
\[
    \pi_2(U) = \pi_2\big(\bigcup_{u\in U}W_u\big)=\bigcap_{u \in U} \pi_2(W_u).
\]
Thus $r \in \pi_2(U)$, and hence $\pi_1(U) \subseteq \pi_2(U)$.

If  $\pi_1(U) \subseteq \pi_2(U)$ for all $U \in \mathcal{O}$, then it is immediate from the definition of $\Gamma$ that $\Gamma(\pi_1) \subseteq \Gamma(\pi_2)$.
\end{proof}

\begin{proof}[Proof of Theorem~\ref{thm:bijection}]
Since each $\pi(U)$ is an ideal of $R$ and each $I_U$ is an ideal of $A_R(G)$, it follows that $\Gamma(\pi)$ is an ideal in $A_R(G)$.

To see that $\Gamma$ is injective, suppose that $\Gamma(\pi_1)=\Gamma(\pi_2)$. Then two
applications of Lemma~\ref{lem:containment} show that $\pi_1(U) = \pi_2(U)$ for every
$U$, and so $\pi_1 = \pi_2$. Hence $\Gamma$ is injective.

To see that $\Gamma$ is surjective,  let $I$ be an ideal in $A_R(G)$. Let $U\in\mathcal{O}$. Set
\begin{equation}\label{eqn:piu}
    \pi(U) = \{r\in R : r1_B \in I \text{ for all compact open } B\subseteq U\}.
\end{equation}
Then  $\pi(U) \in\mathcal{L}(R)$, and we claim that $\pi\in \mathcal{F}$, that is, $\pi$
satisfies~\eqref{eq:2cain}. 

Let  $\mathcal{A} \subseteq \mathcal{O}$. Since $\pi$ reverses set inclusion,  we have 
$\pi(\bigcup_{U \in \mathcal{A}}U)\subseteq \pi(U)$ for all $U\in\mathcal{A}$. Thus  $\pi(\bigcup_{U \in \mathcal{A}}U)\subseteq \bigcap_{U \in \mathcal{A}}\pi(U)$. 

For the reverse containment, fix $r \in \bigcap_{U \in
\mathcal{A}}\pi(U)$.  Let $B$ be a compact open subset of $\bigcup_{U \in \mathcal{A}}U$. We need to show $r1_B\in I$. For each $b \in B$, there exists $U_b
\in \mathcal{A}$ such that $b \in U_b$.  Because $\go$ has a basis of compact open sets,
there exists compact open $K_b \subseteq U_b$ such that $b \in K_b$. Since $r\in\pi(U_b)$ we have $r1_{K_b} \in I$.  Since $B$ is compact, there is
a finite set $C \subseteq \{K_b : b\in B\}$ that covers $B$.  Since $I$ is an ideal,
$r1_{K_b} \in I$ implies $r1_K \in I$ for any compact open $K \subseteq K_b$.  So we may
disjointify $C$ to obtain a finite cover, still called $C$, of $B$ by compact open sets
satisfying $r1_K \in I$ for all $K \in C$.
So  \[r1_B = \sum_{K \in C}r1_K \in I.\]  Thus $r \in \pi(\bigcup_{U \in \mathcal{A}}U)$. Thus 
$\pi$ satisfies~\eqref{eq:2cain}, and $\pi\in\mathcal{F}$ as claimed.

Finally, we show that $\Gamma(\pi) = I$. 
To see that $\Gamma(\pi) \subseteq I$, we take $U \in \mathcal{O}$, $r \in \pi(U)$ and $f \in A_R(G)$ with $\supp(f)\subseteq G_U$, so that $rf$ is a typical spanning element of $\Gamma(\pi)$.  Write $rf=\sum_{B\in F}r_B1_B$ where $F$ is a finite set of mutually disjoint compact open bisections and $0\neq r_B$ for $B\in F$.  Fix $L\in F$ and take $\gamma\in L$.  Then $(rf)(\gamma)=r_{L}\in \pi(U)\setminus \{0\}$.   Since $\supp(rf)\subseteq\supp f\subseteq G_U$ we must have $s(L)\subseteq U$ and hence $r_L1_{s(L)}\in I$ by definition of $r_L\in\pi(U)$. Thus $r_L1_L=1_L*(r_L1_{s(L)})\in I$.  Thus $rf\in I$, and hence $\Gamma(\pi) \subseteq I$.

Conversely, fix $f \in I$. Because $G$ is strongly effective, by Proposition~\ref{prop:ideals} we have $f= \sum_{B\in F}r_B1_B$ where each $r_B1_{s(B)} \in I$. Fix $L \in F$. Recall that $[s(L)]$ is the smallest invariant subset of $G^{(0)}$ containing $s(L)$.
We claim that $r_{L}1_K \in I$ for every compact open $K\subseteq [s(L)]$; this implies $r_{L} \in \pi([s(L)])$ and hence $r_{L}1_{L} \in \pi([s(L)])I_{s([L])} \subseteq \Gamma(\pi)$. It then follows that $f\in \Gamma(\pi)$.

 To prove the claim, fix
$K\subseteq [s(L)]$.  For each $k \in K$, there exists $\gamma_k$ such that $s(\gamma_k)
\in s(L)$ and $r(\gamma_k) = k$. Let $B_k$ be a compact open bisection containing
$\gamma_k$.  We can assume that $s(B_k) \subseteq s(L)$ and $r(B_k) \subseteq K$ (by
taking intersections). Now $\{r(B_k):k\in K\}$ is an open cover of $K$.
By taking a finite subcover and disjointifying, we get a collection of compact open
bisections $\{B_1, \dots, B_n\}$ whose ranges form a disjoint cover of $K$.
For $1\leq i\leq n$ we have $r_L1_{B_i} = 1_{B_i}*r_L1_{s(L)} \in I$ and hence
$r_L1_{r(B_i)} = r_L1_{B_i}*1_{B_i^{-1}} \in I$.  Now
$r_L1_K = \sum_{i=1}^{n} r_L1_{r(B_i)} \in I$
as claimed. Thus $I\subseteq \Gamma(\pi)$. Now $I=\Gamma(\pi)$ and we have shown that $\Gamma$ is surjective.  By definition of
$\pi$---see~\eqref{eqn:piu}---this also establishes the final statement of the
theorem. 
\end{proof}

\section{The lattice isomorphism}
\label{sec:lattice} In this section, we study the  lattice structure of the set $\mathcal{L}(A_R(G))$ of
ideals of $A_R(G)$. We have established in Theorem~\ref{thm:bijection} a bijection from
\[\mathcal{F}:= \{\pi: \mathcal{O} \to \mathcal{L}(R) : \pi \text{
satisfies~\eqref{eq:2cain}}\}\] onto $\mathcal{L}(A_R(G))$. Since $(\mathcal{L}(A_R(G)),
\subseteq, +,\cap)$ is a lattice, $\Gamma$ induces a lattice structure $(\mathcal{F},\preccurlyeq, \vee,\wedge)$  on $\mathcal{F}$ via 
\[
\pi_1 \preccurlyeq \pi_2\Longleftrightarrow \Gamma(\pi_1) \subseteq
\Gamma(\pi_2).
\]
However, it seems  difficult to explicitly describe the element $\pi_1 \vee
\pi_2 \in \mathcal{F}$ such that $\Gamma(\pi_1 \vee \pi_2) = \Gamma(\pi_1) +
\Gamma(\pi_2)$.  

Here we start by explaining the difficulties with $(\mathcal{F},\preccurlyeq, \vee,\wedge)$, and then present a new parameterisation $\mathcal{F}'$ of the ideals of $A_R(G)$ which is
better suited to describing the lattice structure. We will also see that $\mathcal{F}'$ has the additional advantage that it does not require a computation of the lattice $\mathcal{O}$ of open invariant subsets of $\go$.

Let $\pi_1,\pi_2\in\mathcal{F}$. By Lemma~\ref{lem:containment} we have 
\[
\pi_1 \preccurlyeq \pi_2 \Longleftrightarrow \pi_1(U) \subseteq \pi_2(U) \text{ for all } U \in \mathcal{O}.
\]
It is then easy to verify that the function $U \mapsto \pi_1(U) \cap \pi_2(U)$ belongs to $\mathcal{F}$, and is the meet $\pi_1\wedge\pi_2$ of $\pi_1$ and $\pi_2$. The join $\pi_1\vee\pi_2$ in $\mathcal{F}$ is more complicated. One might
guess that $\pi_1 \vee \pi_2$ is the function $g$ defined by  $g(U)= \pi_1(U) + \pi_2(U)$ for  $U\in\mathcal{O}$. But the next example shows that  $g$ may not even  belong to $\mathcal{F}$.

\begin{ex}
\label{ex:twopoints} Consider the groupoid $G$ that consists of two units $x$ and $y$ with the discrete
topology. That is, $G = \go = \{x,y\}$.  Then $A_{\mathbb{Z}}(G) = \mathbb{Z} \oplus
\mathbb{Z}$. The set of nonempty open invariant subsets of $\go$ is
\[
    \mathcal{O}= \{\{x\}, \{y\}, \go\}.
\]
Define $\pi_1, \pi_2: \mathcal{O} \to \mathcal{L}(\mathbb{Z})$ by
\begin{align*}
\pi_1(\{x\}) &= 2\mathbb{Z}, \ \pi_1(\{y\}) = 3\mathbb{Z}, \ \pi_1(\go) = 6\mathbb{Z}, \\
\pi_2(\{x\}) &= 3\mathbb{Z},\ \pi_2(\{y\}) = 5\mathbb{Z}, \ \pi_2(\go) = 15\mathbb{Z}.
\end{align*}
Then $\pi_1$ and $\pi_2$ satisfy~\eqref{eq:2cain}. Also $\Gamma(\pi_1) = 2\mathbb{Z}
\oplus 3\mathbb{Z}$ and $\Gamma(\pi_2) = 3\mathbb{Z} \oplus 5\mathbb{Z}$. Hence
$\Gamma(\pi_1) + \Gamma(\pi_2) = (2 \mathbb{Z} \oplus 3 \mathbb{Z}) + (3 \mathbb{Z}
\oplus 5 \mathbb{Z}) = \mathbb{Z} \oplus \mathbb{Z}$, and it follows that $\pi_1 \vee
\pi_2$ is given by
\[
(\pi_1 \vee \pi_2)(\{x\}) = (\pi_1 \vee \pi_2)(\{y\}) = (\pi_1 \vee \pi_2)(\{x,y\}) = \mathbb{Z}.
\]
Since $\pi_1(\go) + \pi_2(\go) = 6\mathbb{Z} + 15\mathbb{Z} = 3\mathbb{Z} \not=
\mathbb{Z}$, we see that $\pi_1 \vee \pi_2$ is not given by pointwise addition of ideals.
Indeed, since $\go = \{x\} \cup \{y\}$ but $\pi_1(\go) + \pi_2(\go) = 3\mathbb{Z} \not=
\mathbb{Z} = \big(\pi_1(\{x\}) + \pi_2(\{x\})\big) \cap \big(\pi_1(\{x\}) +
\pi_2(\{x\})\big)$, we see that $U \mapsto \pi_1(U) + \pi_2(U)$ does not
satisfy~\eqref{eq:2cain}.
\end{ex}

To overcome this problem, we will reparameterise  $\mathcal{F}$ in
terms of a set $\mathcal{F'}$ of functions from $G^{(0)}$ to $\mathcal{L}(R)$ that are
continuous with respect to a suitable topology, and are suitably $G$-invariant. We will show in Theorem~\ref{thm:alt description} below that the order relation and the meet and join
operations on $(\mathcal{F}', \preccurlyeq)$ translate to pointwise containment,
intersection and addition of functions, giving  a natural description of the
lattice structure on $\mathcal{L}(A_R(G))$.

Define a topology on $\mathcal{L}(R)$ as follows: Given a finite set $F \subseteq R$,
define
\[
    Z(F):= \{I \in \mathcal{L}(R) : F \subseteq I\}.
\]
Then $Z(F_1) \cap Z(F_2) = Z(F_1 \cup F_2)$ for finite $F_1, F_2 \subseteq R$, and
hence the collection of all such $Z(F)$ forms a basis for a topology on
$\mathcal{L}(R)$. We equip $\mathcal{L}(R)$ with this topology. It is a fairly weak topology: it is  $T_0$ because if $I,J \in
\mathcal{L}(R)$ and $r \in I \setminus J$, then $Z(\{r\})$ contains $I$ but not $J$.
However it is not a $T_1$ topology: if $J \subseteq I$ then every open set containing $J$
contains $I$.  

The following lemma is straightforward to prove.
\begin{lemma}\label{lem-rho-cty}
Let $\rho:\go \to \mathcal{L}(R)$. Then $\rho$ is continuous at $u \in \go$ if and only if for all $a \in
\rho(u)$ there exists an open neighbourhood $W$ of $u$ such that $a \in \rho(w)$ for
every $w \in W$. 
\end{lemma}
\begin{proof} Fix $u\in\go$. The sets $\{Z(\{a\}):a\in\rho(u)\}$ form a neighbourhood subbasis at $\rho(u)$ for the topology on $\mathcal{L}(R)$.  Thus  $\rho$ is continuous at $u$ if and only if for each $a \in
\rho(u)$ there is an open neighbourhood $W$ of $u$ such that $\rho(W)\subseteq Z(\{a\})$. 
\end{proof}

We say $\rho:\go \to \mathcal{L}(R)$ is \emph{$G$-invariant} if $\rho(s(\gamma))=\rho(r(\gamma))$
for all $\gamma \in G$. We set 
\[
\mathcal{F}' := \{\rho:\go \to \mathcal{L}(R) : \rho \text{ is $G$-invariant and continuous}\}.
\] 

\begin{lemma}
\label{lem:rhopi} Let $G$ be an ample Hausdorff groupoid and let $R$ a commutative ring with identity.
\begin{enumerate}
\item\label{it:rhopi1} For any function $\rho : \go \to \mathcal{L}(R)$, the function
    $\pi_\rho : \mathcal{O} \to \mathcal{L}(R)$ given by
    \begin{equation}\label{eq:pirho}
        \pi_\rho(U) = \bigcap_{u \in U} \rho(u)
    \end{equation}
    satisfies~\eqref{eq:2cain}.
\item\label{it:rhopi2} For any function $\pi : \mathcal{O} \to \mathcal{L}(R)$, the
    formula
    \begin{equation}\label{eq:rhopi}
        \rho_\pi(u) = \bigcup_{U\text{open}, u \in U} \pi([U])
    \end{equation}
    defines a $g$-invariant continuous function $\rho_\pi : \go \to \mathcal{L}(R)$.
\item\label{it:rhopi3} We have $\pi_{\rho_\pi} = \pi$ for $\pi \in \mathcal{F}$ and
    $\rho_{\pi_\rho} = \rho$ for $\rho \in \mathcal{F'}$. In particular, $\rho\mapsto\pi_\rho$ is a bijection from $\mathcal{F}'$ to $\mathcal{F}$.
\end{enumerate}
\end{lemma}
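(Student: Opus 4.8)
The plan is to verify the three items in turn and then read off the final bijection. I will use throughout that for open $U\subseteq\go$ the invariant hull $[U]=r(s^{-1}(U))$ is again open (because $s^{-1}(U)$ is open and, $G$ being ample and hence \'etale, $r$ is an open map) and invariant, so $[U]\in\mathcal{O}$ whenever $U\neq\emptyset$, with $[[U]]=[U]$. Item (a) is then routine: each $\pi_\rho(U)=\bigcap_{u\in U}\rho(u)$ is an intersection of ideals, hence an ideal, and for $\mathcal{A}\subseteq\mathcal{O}$ the identity~\eqref{eq:2cain} for $\pi_\rho$ is just the observation that intersecting $\rho$ over $\bigcup_{U\in\mathcal{A}}U$ agrees with intersecting the per-$U$ intersections; so $\pi_\rho\in\mathcal{F}$.

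For item (b) there are three checks. First, that $\rho_\pi(u)=\bigcup_{U\ni u,\ U\text{ open}}\pi([U])$ is an ideal: the point is that this union is \emph{directed}, since for open $U_1,U_2\ni u$ the set $U_1\cap U_2$ is open, contains $u$, and satisfies $[U_1\cap U_2]\subseteq[U_1],[U_2]$, so~\eqref{eq:1cain} gives $\pi([U_1])\cup\pi([U_2])\subseteq\pi([U_1\cap U_2])$; a directed union of ideals is an ideal. (This monotonicity is the only place $\pi\in\mathcal{F}$ enters, and it is exactly what item (c) needs.) Second, continuity: by Lemma~\ref{lem-rho-cty} it suffices, given $a\in\rho_\pi(u)$, to find open $W\ni u$ with $a\in\rho_\pi(w)$ for all $w\in W$; writing $a\in\pi([U_0])$ for some open $U_0\ni u$, the choice $W:=U_0$ works, because each $w\in U_0$ has $U_0$ as an open neighbourhood, whence $\pi([U_0])\subseteq\rho_\pi(w)$. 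Third, $G$-invariance: for $\gamma\in G$ with $a:=s(\gamma)$, $b:=r(\gamma)$ and any open $U\ni a$, invariance of $[U]$ forces $b\in[U]$; since $[U]$ is then an open set containing $b$ with $[[U]]=[U]$, the term $\pi([U])=\pi([[U]])$ occurs in the union defining $\rho_\pi(b)$, so $\rho_\pi(a)\subseteq\rho_\pi(b)$, and the reverse inclusion follows by running the argument with $\gamma^{-1}$.

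The substance is item (c). For $\pi_{\rho_\pi}=\pi$ (with $\pi\in\mathcal{F}$ and $U\in\mathcal{O}$): the inclusion $\pi(U)\subseteq\pi_{\rho_\pi}(U)$ is immediate, since for each $u\in U$ the invariant set $U$ itself contributes $\pi([U])=\pi(U)$ to the union defining $\rho_\pi(u)$. For the reverse, take $r\in\pi_{\rho_\pi}(U)=\bigcap_{u\in U}\rho_\pi(u)$ and for each $u$ choose open $V_u\ni u$ with $r\in\pi([V_u])$; replacing $V_u$ by $V_u\cap U$ (which keeps $r\in\pi([V_u])$ by~\eqref{eq:1cain}) we may assume $[V_u]\subseteq U$, whence $U=\bigcup_{u\in U}[V_u]$ and~\eqref{eq:2cain} gives $\pi(U)=\bigcap_u\pi([V_u])\ni r$. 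For $\rho_{\pi_\rho}=\rho$ (with $\rho\in\mathcal{F}'$): $\rho_{\pi_\rho}(u)\subseteq\rho(u)$ holds because $u\in[V]$ for every open $V\ni u$, so any element of $\pi_\rho([V])=\bigcap_{w\in[V]}\rho(w)$ lies in $\rho(u)$; conversely, given $a\in\rho(u)$, continuity (Lemma~\ref{lem-rho-cty}) produces an open $W\ni u$ with $a\in\rho(w)$ for all $w\in W$, and $G$-invariance of $\rho$ then upgrades this to all $w\in[W]$ (each such $w=r(\gamma)$ with $s(\gamma)\in W$ satisfies $\rho(w)=\rho(s(\gamma))\ni a$), so $a\in\pi_\rho([W])\subseteq\rho_{\pi_\rho}(u)$. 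Finally, (a) and (b) show $\rho\mapsto\pi_\rho$ maps $\mathcal{F}'$ into $\mathcal{F}$ and $\pi\mapsto\rho_\pi$ maps $\mathcal{F}$ into $\mathcal{F}'$, and (c) shows they are mutually inverse; hence $\rho\mapsto\pi_\rho$ is the asserted bijection.

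I expect the main obstacle to be the two inversion identities of item (c), and in particular their nontrivial inclusions. For $\pi_{\rho_\pi}=\pi$ the crux is the shrinking step $V_u\rightsquigarrow V_u\cap U$, which realises $U$ as a union of invariant hulls so that the defining property~\eqref{eq:2cain} applies; for $\rho_{\pi_\rho}=\rho$ the crux is that continuity only controls $\rho$ on a neighbourhood $W$ of $u$, so one genuinely needs $G$-invariance to propagate membership $a\in\rho(w)$ from $W$ to its (possibly much larger) invariant hull $[W]$. Making continuity and invariance cooperate here is where all the hypotheses on $\rho$ are consumed.
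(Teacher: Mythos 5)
Your proof is correct and follows essentially the same route as the paper's: item (a) by interchanging intersections, item (b) via the directed-union/ideal argument using~\eqref{eq:1cain} together with Lemma~\ref{lem-rho-cty} and the openness of invariant hulls, and item (c) by the same two inclusion arguments (shrinking the neighbourhoods $V_u$ into $U$ so that~\eqref{eq:2cain} applies, and using continuity plus $G$-invariance to pass from $W$ to $[W]$). The only differences are cosmetic: you make explicit a couple of steps the paper glosses over, such as replacing $V_u$ by $V_u\cap U$ and the observation that $\rho_\pi$ being an ideal is the one place where $\pi\in\mathcal{F}$ is used.
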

\begin{proof}
(\ref{it:rhopi1}) Given $\rho : \go \to \mathcal{L}(R)$ and $\mathcal{A} \subseteq
\mathcal{O}$, we have
\[
\pi_\rho\Big(\bigcup_{U \in \mathcal{A}} U\Big)
    = \bigcap_{u \in \bigcup_{U \in \mathcal{A}} U} \rho(u)
    = \bigcap_{U \in \mathcal{A}} \Big(\bigcap_{u \in U} \rho(u)\Big)
    = \bigcap_{U \in \mathcal{A}} \pi_\rho(U).
\]

(\ref{it:rhopi2}) Fix $\pi \in \mathcal{F}$. We start by showing that $\rho_\pi(u) \in \mathcal{L}(R)$. Fix $u \in \go$. Let $r \in \rho_\pi(u)$ and $s \in R$. By definition of $\rho_{\pi}$ there exists an 
open neighbourhood $U$ of $u$ such that $r \in \pi([U])$. Then $rs, sr \in \pi([U])
\subseteq \rho_\pi(u)$ because $\pi([U])$ is an ideal. Also, if $r,s \in \rho_\pi(u)$, there exist open neighbourhoods $U_r$ and $U_s$ of $u$ such that $r \in \pi([U_r])$ and $s
\in \pi([U_s])$. Now~\eqref{eq:1cain} gives $r,s \in \pi([U_r \cap U_s])$, and hence $r +
s \in \pi([U_r \cap U_s]) \subseteq \rho_\pi(u)$. Thus $\rho_\pi(u)$ is an ideal. 

To see that $\rho_\pi$ is continuous, we use Lemma~\ref{lem-rho-cty}. Fix  $u\in\go$ and fix $a \in \rho_{\pi}(u)$. By definition of $\rho_\pi$, there exists an open neighbourhood $W$ of $u$ such that $a \in \pi([W])$.  Let $v\in W$. Then
\[
a\in\pi([W])\subseteq \bigcup_{V\text{\ open}, v\in V}\pi([V])=\rho_\pi(v).
\]
It follows that $\rho_\pi$ is continuous.  It is $G$-equivariant because $\{[U] : r(\gamma) \in U, U\text{ open}\} = \{[U] : s(\gamma) \in U, U\text{ open}\}$ for every $\gamma \in G$.

(\ref{it:rhopi3}) First fix $\pi \in \mathcal{F}$. We must
show that $\pi_{\rho_\pi}(U) = \pi(U)$ for all $U \in \mathcal{O}$. Fix  $U \in \mathcal{O}$. First suppose that $a \in \pi(U)$. Then $a \in
\pi([W])$ for every open $W \subseteq U$. Since $[W]\subseteq U$ implies $\pi(U)\subseteq \pi([W])$, we get
\[
a \in \bigcap_{u \in U} \Big(\bigcup_{W \text{ open},\; u \in W} \pi([W])\Big)
    = \bigcap_{u\in U} \rho_{\pi}(u)
    = \pi_{\rho_{\pi}}(U).
\]
Now suppose that $a \in \pi_{\rho_{\pi}}(U)$. Then for each $u \in U$, there exists an
open neighbourhood $W_u \subseteq U$ of $u$ such that $a \in \pi([W_u])$. Since $\pi$
satisfies~\eqref{eq:2cain}, we obtain
\[
    a \in \bigcap_{u \in U} \pi([W_u]) = \pi\Big(\bigcup_{u \in U} [W_u]\Big) = \pi(U).
\]

Now fix $\rho \in \mathcal{F}'$. We must show that $\rho_{\pi_{\rho}}= \rho$. Fix $u \in
\go$. Using that $\rho$ is $G$-invariant for the final equality, we calculate:
\begin{align}
\rho_{\pi_{\rho}}(u)
    &= \bigcup_{W \text{ open},\; u \in W} \pi_{\rho}([W])\notag\\
    &=  \bigcup_{W \text{ open},\; u \in W} \Big(\bigcap_{v \in [W]}\rho(v)\Big)\notag\\
    &= \bigcup_{W \text{ open},\; u \in W} \Big(\bigcap_{v \in W}\rho(v)\Big).\label{eq:rhopirho}
\end{align}
To see that this is equal to $\rho(u)$, first fix $a \in \rho_{\pi_{\rho}}(u)$. Then
there is a neighbourhood $U$ of $u$ such that $a \in \rho(v)$ for every $v \in [U]$. In
particular, $a \in \rho(u)$. Now fix $a \in \rho(u)$. Then there exists an open
neighbourhood $W\subseteq \go$ of $u$ such that $\rho(v) \in Z(\{a\})$ for all $v \in W$.
That is $a \in \rho(v)$ for all $v \in W$. So~\eqref{eq:rhopirho} gives
\[
    a \in \bigcap_{v \in W}\rho(v) \subseteq \rho_{\pi_{\rho}}(u).\qedhere
\]
\end{proof}

\begin{thm}\label{thm:alt description}
Let $G$ be an ample Hausdorff groupoid  which is strongly effective, and let $R$ be a commutative ring with identity. Let
$\mathcal{F}'$ be the set of continuous $G$-invariant functions $\rho : \go \to
\mathcal{L}(R)$. There is a bijection $\Gamma':\mathcal{F}' \to \mathcal{L}(A_R(G))$ such
that
\[
    \Gamma'(\rho) = \lsp_R\Big\{r1_B : B\text{ is a compact open bisection and }r \in \bigcap_{u \in [s(B)]} \rho(u)\Big\}.
\]
Define a relation $\preccurlyeq$ on $\mathcal{F}'$ by
\[
    \rho_1 \preccurlyeq \rho_2 \text{ if and only if } \rho_1(u) \subseteq \rho_2(u) \text{ for all }u \in \go.
\]
Then $(\mathcal{F}', \preccurlyeq)$ is a lattice with join and meet operations given by
\begin{align}
\rho_1 \vee \rho_2(u) &= \rho_1(u) + \rho_2(u) \text{ and } \label{eq:vee'}\\
\rho_1 \wedge \rho_2(u) &= \rho_1(u) \cap \rho_2(u), \label{eq:wedge'}
\end{align}
and $\Gamma' : (\mathcal{F}', \preccurlyeq) \to (\mathcal{L}(A_R(G)), \subseteq)$ is a
lattice isomorphism.
\end{thm}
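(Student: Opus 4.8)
The plan is to realise $\Gamma'$ as the composite $\Gamma \circ (\rho \mapsto \pi_\rho)$ and thereby inherit most of the statement from results already in hand. Since $\rho \mapsto \pi_\rho$ is a bijection from $\mathcal{F}'$ onto $\mathcal{F}$ by Lemma~\ref{lem:rhopi}(\ref{it:rhopi3}) and $\Gamma$ is a bijection by Theorem~\ref{thm:bijection}, setting $\Gamma'(\rho) := \Gamma(\pi_\rho)$ at once gives a bijection $\Gamma' : \mathcal{F}' \to \mathcal{L}(A_R(G))$. First I would check that this $\Gamma'$ is given by the displayed formula, via two containments. For $\supseteq$, note that a nonempty compact open bisection $B$ has $[s(B)] \in \mathcal{O}$ (it is open because $s$ and $r$ are open maps on an ample groupoid), and that $r \in \bigcap_{u \in [s(B)]}\rho(u) = \pi_\rho([s(B)])$ together with $\supp(1_B) = B \subseteq G_{[s(B)]}$ exhibits $r1_B$ as a spanning element of $\Gamma(\pi_\rho)$. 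For $\subseteq$, a spanning element $rf$ with $r \in \pi_\rho(U)$ and $\supp(f) \subseteq G_U$ can be written $rf = \sum_{B \in F}(rr_B)1_B$ with the $B$ disjoint compact open bisections satisfying $s(B) \subseteq U$; since $U$ is invariant and contains $s(B)$ we have $[s(B)] \subseteq U$, so $\pi_\rho(U) \subseteq \pi_\rho([s(B)])$ by~\eqref{eq:1cain}, whence $rr_B \in \pi_\rho([s(B)]) = \bigcap_{u \in [s(B)]}\rho(u)$ and each $(rr_B)1_B$ lies in the displayed span.

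Next I would upgrade $\Gamma'$ to an order isomorphism. By Lemma~\ref{lem:containment}, $\Gamma'(\rho_1) \subseteq \Gamma'(\rho_2)$ if and only if $\pi_{\rho_1}(U) \subseteq \pi_{\rho_2}(U)$ for all $U \in \mathcal{O}$. Since $\pi_{\rho_i}(U) = \bigcap_{u \in U}\rho_i(u)$ by~\eqref{eq:pirho}, pointwise containment $\rho_1 \preccurlyeq \rho_2$ clearly forces $\pi_{\rho_1}(U) \subseteq \pi_{\rho_2}(U)$; for the converse I would use $\rho_i = \rho_{\pi_{\rho_i}}$ from Lemma~\ref{lem:rhopi}(\ref{it:rhopi3}) together with~\eqref{eq:rhopi}, so that $\pi_{\rho_1}(U) \subseteq \pi_{\rho_2}(U)$ for all $U$ (in particular for $U = [W]$) gives $\rho_1(u) = \bigcup_{u \in W}\pi_{\rho_1}([W]) \subseteq \bigcup_{u \in W}\pi_{\rho_2}([W]) = \rho_2(u)$. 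Thus $\Gamma'(\rho_1) \subseteq \Gamma'(\rho_2)$ if and only if $\rho_1 \preccurlyeq \rho_2$, so $\Gamma'$ is an order isomorphism from $(\mathcal{F}', \preccurlyeq)$ onto the lattice $(\mathcal{L}(A_R(G)), \subseteq)$. An order isomorphism onto a lattice makes its domain a lattice and is automatically a lattice isomorphism, so $(\mathcal{F}', \preccurlyeq)$ is a lattice and $\Gamma'$ preserves the join and meet.

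It then remains only to identify the abstract join and meet concretely as~\eqref{eq:vee'} and~\eqref{eq:wedge'}. For each I would verify that the candidate function lies in $\mathcal{F}'$ and is the relevant bound. The functions $u \mapsto \rho_1(u) \cap \rho_2(u)$ and $u \mapsto \rho_1(u) + \rho_2(u)$ are $G$-invariant and ideal-valued by construction, so the content is continuity, which I would check using Lemma~\ref{lem-rho-cty}: for the meet, a neighbourhood on which $a$ lies in $\rho_1$ and one on which it lies in $\rho_2$ intersect in a neighbourhood on which $a$ lies in both; for the join, I would first decompose $a = a_1 + a_2$ with $a_i \in \rho_i(u)$ and intersect the neighbourhoods supplied for $a_1$ and $a_2$. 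Finally $u \mapsto \rho_1(u) \cap \rho_2(u)$ is a common lower bound dominating every common lower bound, and $u \mapsto \rho_1(u) + \rho_2(u)$ is a common upper bound dominated by every common upper bound (here using that each $\sigma(u)$ is an ideal, hence closed under addition), which identifies them as $\wedge$ and $\vee$.

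The main obstacle is modest and lies in the first paragraph: the passage $s(B) \subseteq U \implies [s(B)] \subseteq U \implies \pi_\rho(U) \subseteq \pi_\rho([s(B)])$ that lets me replace the arbitrary invariant set $U$ in the definition of $\Gamma$ by the canonical set $[s(B)]$ appearing in the statement. Everything else is bookkeeping built on Theorem~\ref{thm:bijection}, Lemma~\ref{lem:containment}, and Lemma~\ref{lem:rhopi}, with the continuity check for the join being the only genuinely topological step, and that is routine given Lemma~\ref{lem-rho-cty}.
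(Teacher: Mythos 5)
Your proposal is correct and follows essentially the same route as the paper: define $\Gamma'$ as the composite $\Gamma\circ(\rho\mapsto\pi_\rho)$ of the bijections from Lemma~\ref{lem:rhopi} and Theorem~\ref{thm:bijection}, verify the displayed formula via $[s(B)]\subseteq U$ and~\eqref{eq:1cain}, deduce that $\Gamma'$ is an order isomorphism using Lemma~\ref{lem:containment}, and then identify the join and meet by checking that the pointwise sum and intersection lie in $\mathcal{F}'$ (via Lemma~\ref{lem-rho-cty}, decomposing $a=a_1+a_2$ for the sum) and are the extremal bounds. The only divergence is in proving that $\Gamma'(\rho_1)\subseteq\Gamma'(\rho_2)$ forces $\rho_1\preccurlyeq\rho_2$, where you argue directly from $\rho_i=\rho_{\pi_{\rho_i}}$ and monotonicity of the unions in~\eqref{eq:rhopi} rather than via the paper's detour through elements $a1_K$; both arguments are valid and yours is marginally more direct.
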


\begin{proof}
To see that $\Gamma'(\rho) = \Gamma(\pi_\rho)$, we start by unravelling $\Gamma(\pi_\rho)$:
\[
\Gamma(\pi_\rho)=\lsp_R\Big(\bigcup_{U \in \mathcal{O}} \{rf : r \in \pi(U), f \in A_R(G), \supp(f) \subseteq G_U\}\Big).
\] 
Take $rf\in \Gamma(\pi_\rho)$.  Then $rf=\sum_{B\in F} r_B1_B$ where $F$ is a set of mutually disjoint compact open bisections contained in $G_U$ for some $U \in \mathcal{O}$. Fix $L\in F$ and let  $\gamma\in L$. Then $r_L=rf(\gamma)\in \bigcap_{u\in U}\rho(u)\subseteq \bigcap_{u\in [s(L)]}\rho(u)$ since $[s(L)]\subseteq U$. Thus each $r_B1_B\in\Gamma'(\rho)$, and hence $rf\in  \Gamma'(\rho)$. Now $\Gamma(\pi_\rho)\subseteq  \Gamma'(\rho)$. The reverse set inclusion is immediate. Thus $\Gamma'(\rho) = \Gamma(\pi_\rho)$.

Now $\Gamma'$ is the composition of the bijections $\rho \mapsto \pi_\rho$ and $\pi\mapsto \Gamma(\pi)$ of 
Lemma~\ref{lem:rhopi} and Theorem~\ref{thm:bijection}, respectively.   Hence $\Gamma'$ is a bijection. To see that it is a lattice isomorphism, we must show that $\Gamma'(\rho_1) \subseteq \Gamma'(\rho_2)$ if and only if
$\rho_1 \preccurlyeq \rho_2$.

First suppose that $\Gamma'(\rho_1) \subseteq \Gamma'(\rho_2)$. Then $\Gamma(\pi_{\rho_1})
\subseteq \Gamma(\pi_{\rho_2})$, forcing $\pi_{\rho_1}(U) \subseteq \pi_{\rho_2}(U)$ for
all $U$. Fix $u \in \go$ and $a \in \rho_1(u)$.  We show that $a \in \rho_2(u)$. We have
\begin{align*}
a \in \rho_1(u) = \rho_{\pi_{\rho_1}}(u) = \bigcup_{W \text{ open},\; u \in W} \pi_{\rho_1}([W]).
\end{align*}
Hence there is an open neighbourhood $W \subseteq \go$ of $u$ such that $a \in
\pi_{\rho_1}([W])$. Let $K$ be a compact open subset of $W$. Then
\[
    a1_K \in \Gamma(\pi_{\rho_1}) = \Gamma'(\rho_1) \subseteq \Gamma'(\rho_2),
\]
forcing
\[
    a \in \bigcap_{v \in [K]} \rho_2(v),
\]
and in particular $a \in \rho_2(u)$. Thus $\rho_1 \preccurlyeq \rho_2$.

Second, suppose that $\rho_1 \preccurlyeq \rho_2$. Then $\rho_1(u) \subseteq \rho_2(u) \text{ for all }u \in \go$, and take $\Gamma'(\rho_1) \subseteq \Gamma'(\rho_2)$ by definition of $\Gamma'$.

It remains only to show that $\rho_1 \vee \rho_2$ and $\rho_1 \wedge \rho_2$ are given by
the formulas \eqref{eq:vee'}~and~\eqref{eq:wedge'}.
For this, define $\tau_\vee,\tau_\wedge : \go \to \mathcal{L}(R)$ by
\[
\tau_\vee(u) = \rho_1(u) + \rho_2(u)\quad\text{ and }\quad
\tau_\wedge(u) = \rho_1(u) \cap \rho_2(u)\quad\text{ for all $u \in \go$.}
\]
We first check that $\tau_\vee  \in \mathcal{F}'$.
To see that $\tau_\vee$ is continuous, we use Lemma~\ref{lem-rho-cty}.
Fix $u \in \go$ and $a \in \tau_\vee(u)$. Write $a = a_1 + a_2$ where $a_1 \in \rho_1(u)$
and $a_2 \in \rho_(u)$. Since $\rho_1$ and $\rho_2$ are continuous, there exist open
neighbourhoods $W_1$ and $W_2$ of $u$ such that $a_i \in \bigcap_{v \in W_i}
\rho_i(v)$ for $i=1,2$. Hence $W := W_1 \cap W_2$ is an open neighbourhood of $v$
such that
\[
    a = a_1 + a_2 \in  \tau_\vee(v) \text{ for all } v \in W.
\]
It follows that $\tau_\vee$ is continuous. It is $G$-equivariant because $\rho_1$ and $\rho_2$ are. Thus $\tau_\vee\in \mathcal{F}'$. A similar argument shows that $\tau_\wedge \in \mathcal{F}'$ as well.

We have $\tau_\wedge(u) = \rho_1(u) \cap \rho_2(u) \subseteq \rho_1(u), \rho_2(u)$ for
all $u\in\go$, and so $\tau_\wedge \preccurlyeq \rho_1, \rho_2$. The maximality of $\rho_1 \wedge
\rho_2$ gives $\tau_\wedge \preccurlyeq \rho_1 \wedge \rho_2$. On the other hand, we have
$\rho_1 \wedge \rho_2 \preccurlyeq \rho_1, \rho_2$, so for all $u\in\go$ we have $(\rho_1
\wedge \rho_2)(u) \subseteq \rho_1(u), \rho_2(u)$, forcing $(\rho_1 \wedge \rho_2)(u)
\subseteq \rho_1(u) \cap \rho_2(u) = \tau_\wedge(u)$, and so $\rho_1 \wedge \rho_2
\preccurlyeq \tau_\wedge$. Since $\preccurlyeq$ is a partial order, we deduce that
$\tau_\wedge = \rho_1 \wedge \rho_2$. A similar argument gives $\tau_\vee = \rho_1
\vee \rho_2$.
\end{proof}

\begin{rmk}
We chose to first present the description of $\mathcal{L}(A_R(G))$ in terms of 
\[\mathcal{F}:= \{\pi: \mathcal{O} \to \mathcal{L}(R) : \pi \text{
satisfies~\eqref{eq:2cain}}\}\] 
of Theorem~\ref{thm:bijection} rather than the description in terms of
\[
\mathcal{F}' := \{\rho:\go \to \mathcal{L}(R) : \rho \text{ is $G$-invariant and continuous}\}
\] 
of Theorem~\ref{thm:alt description}. We did this  because  $\mathcal{F}$ is closer in spirit to the  description, in terms of open invariant sets,  of the ideals in a groupoid $C^*$-algebra in \cite[Corollary 4.9]{Renault:JOT91} or the basic ideals of a Steinberg algebra in Section~\ref{sec:basic}. In the context of graph groupoids, $\mathcal{F}$ is also much more closely
related to the Bates--Pask--Raeburn--Szyma\'nski catalogue of ideals of $C^*(E)$ in \cite[\S4]{BPRS},
Tomforde's catalogue of basic ideals of $L_R(E)$ in terms of saturated hereditary sets in \cite[Theorem~7.9]{Tomforde:JPAA11}, and the analogous theorems for the Kumjian--Pask algebras of higher-rank graphs \cite[Theorem~5.1]{ACaHR} and \cite[Theorem~9.4]{CFaH}.
(see also Theorems~\ref{thm:LPideals} and ~\ref{thm:KPideals} below). Nevertheless, we believe that the description in terms
of $\mathcal{F}'$ is very natural, and at least in some cases much easier to compute with
(as is the case in Example~\ref{ex:tree} below). The main advantage of the description in terms of
$\mathcal{F}$ is that it is easy to decide which elements $r 1_B$ belong to an ideal of
the form $\Gamma(\pi)$; the principal advantages of the description in terms  of
$\mathcal{F}'$ are that there is no need to compute the collection of all open
invariant sets, and that it makes the join operation easier to compute.
\end{rmk}

\section{Leavitt path algebras and Kumjian--Pask algebras}\label{sec:Lpa/KPa}

In this section we explain what 
Theorem~\ref{thm:bijection}, and its crucial ingredient Proposition~\ref{prop:ideals}, say about a Leavitt path algebra of a directed graph and about a Kumjian--Pask algebra of a higher-rank graph.  Since a Leavitt path algebra is a  Kumjian--Pask
algebra of a $1$-graph, we will deduce Theorem~\ref{thm:LPideals} about the Leavitt path algebra from the analogous theorem about the Kumjian--Pask algebra.  We start by gathering background needed to state Theorem~\ref{thm:LPideals}.

Let  $E=(E^0, E^1, r, s)$ be  a row-finite directed graph with no sources.
A subset $H \subseteq E^0$ is \emph{hereditary} if $r(e) \in H$ implies $s(e) \in H$ for all $e \in E^1$, and is \emph{saturated} if $s(vE^1) \subseteq H$ implies $v \in H$ for all $v \in E^0$. 
We write $\mathcal{H}_E$ for the
collection of all saturated hereditary subsets of $E^0$.
Given $\mathcal{A} \subseteq \mathcal{H}_E$, we write $\bigvee \mathcal{A}$ for the
smallest saturated hereditary set containing every $H \in \mathcal{A}$; that is,
\[
\bigvee \mathcal{A} = \bigcap_{H \in \mathcal{H}_E,\; K \subseteq H\text{ for all }K \in \mathcal{A}} H.
\]

A graph $E$ is satisfies \emph{Condition~(L)} if every cycle has an entry. Further, $E$ satisfies \emph{Condition~(K)} if for every $v\in E^0$, either there is no cycle based at $v$, or there are at least two distinct return paths based at $v$.  A graph satisfies  Condition~(K) if and only if  for every saturated hereditary subset $H\neq E^0$ of $E^0$, the subgraph $E\setminus H=(E^0\setminus H, s^{-1}(E^0\setminus H), r, s)$ satisfies Condition~(L) \cite[Lemma~4.7]{CBMS}.  
It  follows from Corollary~\ref{cor-se-iff-sa} below that $E$ satisfies Condition~(K)  if and only if  the graph groupoid of $E$ is strongly effective.
 
 We refer to \cite[\S2]{Tomforde:JPAA11} for the definition of the Leavitt path algebra $L_R(E)$. We write $(p,s)$ for the universal generating Leavitt family in  $L_R(E)$. 
Let $H\in \mathcal{H}_E$. Then the ideal $I_H$ of $L_R(E)$ generated by $\{p_v:v\in H\}$ is a basic ideal by \cite[Proposition~7.7]{Tomforde:JPAA11}.
When $E$ satisfies Condition~(K), the map $H\mapsto I_H$ is an isomorphism from the lattice of hereditary saturated subsets of $E^0$ onto the lattice of basic ideals of $L_R(E)$ by \cite[Corollary~7.18]{Tomforde:JPAA11}.  Theorem~\ref{thm:LPideals} addresses the non-basic ideal structure of $L_R(E)$ when $E$ satisfies (K).

\begin{thm}\label{thm:LPideals}
Let $E$ be a row-finite directed graph with no sources and let $R$ a commutative ring with identity.
Suppose that $E$ satisfies Condition~(K).
\begin{enumerate}
\item\label{it:Lideals} Suppose that $I$ is an ideal in $L_R(E)$. Then
    \[
        I= \lsp_R \{rs_{\lambda}s_{\mu^*}: rp_{s(\mu)} \in I\}.
    \]
\item\label{it:Llattice} Let $\mathcal{H}_E$ be the set of all saturated hereditary
    subsets of $E^0$, let $\mathcal{L}(R)$ be the set of ideals of $R$ and let
    $\mathcal{F}$ be the set of all functions $\pi:\mathcal{H}_E \to \mathcal{L}(R)$
    such that
    \[
    \pi\Big(\bigvee_{H \in \mathcal{A}}H\Big) = \bigcap_{H \in \mathcal{A}}\pi(H)
        \quad\text{ for all $\mathcal{A} \subseteq \mathcal{H}_E$.}
    \]
    Then the map $\Gamma:\mathcal{F} \to \mathcal{L}(L_R(E))$ given by
    \[
        \Gamma(\pi) = \lsp_R\{rs_{\mu}s_{\nu^*} :\text{ there exists }H \in \mathcal{H}_E \text{ such that }
        r \in \pi(H) \text{ and } s_{\mu}s_{\nu^*} \in I_H\}
    \]
    is a bijection.
\item\label{it:Lm&j} Let $\pi_1,\pi_2\in\mathcal{F}$. Then
    $\Gamma(\pi_1) \subseteq \Gamma(\pi_2)$ if and only if $\pi_1(H) \subseteq
    \pi_2(H)$ for all $H \in \mathcal{H}_E$.
\end{enumerate}
\end{thm}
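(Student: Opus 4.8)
The plan is to derive Theorem~\ref{thm:LPideals} from the groupoid results of the previous sections by transporting them along the isomorphism of $L_R(E)$ with the Steinberg algebra of its graph groupoid. Write $G_E$ for the graph groupoid of $E$; since $E$ is row-finite with no sources, its unit space is the infinite-path space $E^\infty$, with basic compact open sets the cylinders $Z(\mu) = \{x \in E^\infty : x \text{ begins with } \mu\}$. Recall the canonical isomorphism $\theta : L_R(E) \to A_R(G_E)$ that sends $s_\mu s_{\nu^*}$ to $1_{Z(\mu,\nu)}$, where $Z(\mu,\nu)$ is the compact open bisection with range $Z(\mu)$ and source $Z(\nu)$; in particular $\theta(p_v) = 1_{Z(v)}$ (see \cite{ACaHR, CFaH}). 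Because $E$ satisfies Condition~(K), Corollary~\ref{cor-se-iff-sa} shows $G_E$ is strongly effective, so Proposition~\ref{prop:ideals}, Theorem~\ref{thm:bijection} and Lemma~\ref{lem:containment} all apply to $A_R(G_E)$. Everything then reduces to building a careful dictionary between the graph-theoretic data and the groupoid data.

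For part~(\ref{it:Lideals}), I would apply Proposition~\ref{prop:ideals} to the ideal $\theta(I)$. The bisections $Z(\mu,\nu)$ span $A_R(G_E)$ and every compact open bisection is a finite disjoint union of them, so in the resulting expression $\theta(I) = \lsp\{r1_B : r1_{s(B)} \in \theta(I)\}$ we may take $B = Z(\mu,\nu)$, whence $r1_B = \theta(rs_\mu s_{\nu^*})$ and $r1_{s(B)} = \theta(rs_\nu s_{\nu^*})$. The only point needing a computation is that $rs_\nu s_{\nu^*} \in I$ if and only if $rp_{s(\mu)} \in I$: one direction multiplies $rp_{s(\mu)}$ by $s_\nu$ and $s_{\nu^*}$, the other uses $s_{\nu^*}s_\nu = p_{s(\nu)} = p_{s(\mu)}$. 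Transporting back by $\theta^{-1}$ then gives the formula in~(\ref{it:Lideals}).

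For part~(\ref{it:Llattice}), the key is the order isomorphism $H \mapsto U_H$ from $\mathcal{H}_E$ onto $\mathcal{O} \cup \{\emptyset\}$, where
\[
U_H := \{x \in E^\infty : r(x_n) \in H \text{ for some } n\},
\]
which carries $\bigvee\mathcal{A}$ to $\bigcup_{H\in\mathcal{A}} U_H$ (this is the restriction of the standard graph correspondence \cite{BPRS}). Comparing the two lattice isomorphisms onto basic ideals, namely $U \mapsto I_U$ of Theorem~\ref{thm:basicideal} and $H \mapsto I_H$ of \cite[Corollary~7.18]{Tomforde:JPAA11}, on the generators $p_v$ shows $\theta(I_H) = I_{U_H}$. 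Precomposition with $H \mapsto U_H$ then identifies the graph-side $\mathcal{F}$ with the groupoid-side $\mathcal{F}$ of Theorem~\ref{thm:bijection}, the only discrepancy being that $\mathcal{H}_E$ includes the empty set while $\mathcal{O}$ excludes $\emptyset$; but~\eqref{eq:2cain} applied to $\mathcal{A} = \emptyset$ forces the value $R$ there, so this does not affect the bijection. Finally $s_\mu s_{\nu^*} \in I_H$ translates, via $\theta(I_H) = I_{U_H}$ and $\supp 1_{Z(\mu,\nu)} = Z(\mu,\nu)$, into $s(Z(\mu,\nu)) \subseteq U_H$; hence the spanning set defining $\Gamma$ in~(\ref{it:Llattice}) is exactly $\theta^{-1}$ of the spanning set defining $\Gamma$ in Theorem~\ref{thm:bijection}. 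Bijectivity of the graph-side $\Gamma$ is then inherited from Theorem~\ref{thm:bijection}.

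Part~(\ref{it:Lm&j}) is then immediate: transporting Lemma~\ref{lem:containment} through $\theta$ and the identification of the two copies of $\mathcal{F}$ turns $\Gamma(\pi_1) \subseteq \Gamma(\pi_2)$ into pointwise containment over $\mathcal{O}$, which via $H \mapsto U_H$ is pointwise containment over $\mathcal{H}_E$. The main obstacle throughout is not any single estimate but the dictionary itself: making precise the isomorphism $\theta$ and the cylinder calculus, proving $H \mapsto U_H$ is a lattice isomorphism sending $\bigvee$ to $\bigcup$ with $\theta(I_H) = I_{U_H}$, and checking the support and containment equivalences for the bisections $Z(\mu,\nu)$. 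Once this dictionary is in place, all three parts are direct translations of the groupoid theorems.
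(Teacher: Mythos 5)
Your proposal is correct and matches the paper's argument: the paper proves the $k$-graph analogue (Theorem~\ref{thm:KPideals}) by exactly this translation --- Corollary~\ref{cor-se-iff-sa} for strong effectiveness, the dictionary $H\mapsto U_H$ of Lemma~\ref{lem:lattice} carrying $\bigvee$ to $\bigcup$ and $I_H$ to $I_{U_H}$, and the $Z(\mu,\nu)$ computation for part~(\ref{it:Lideals}) --- and then specialises to $1$-graphs. Your only deviation is working directly with $G_E$ rather than via $\KP_R(E^*)$, which changes nothing of substance (and your explicit handling of the empty saturated hereditary set is a point the paper leaves implicit).
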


Roughly, part \eqref{it:Lideals} of Theorem~\ref{thm:LPideals} comes from Proposition~\ref{prop:ideals},   
\eqref{it:Llattice} comes from Theorem~\ref{thm:bijection}, and \eqref{it:Lm&j} comes from Lemma~\ref{lem:containment} used in the proof of Theorem~\ref{thm:bijection}.  As we said above, the proof of Theorem~\ref{thm:LPideals}  follows from  the analogous  Theorem~\ref{thm:KPideals} for Kumjian--Pask algebras, which we state and prove below.  We now outline the background needed to state Theorem~\ref{thm:KPideals}.

For a positive integer $k$, the additive semigroup $\N^k$ can be viewed as a category with one object. Following Kumjian and Pask's \cite[Definitions~1.1]{KP}, a \emph{graph of rank $k$} or \emph{$k$-graph}
is a countable category $\Lambda = (\Lambda^0,\Lambda,r,s)$ together with a functor $d:\Lambda \to {\N}^k$, called
\emph{the degree map}, satisfying the following \emph{factorisation property}:
if $\lambda \in \Lambda$ and $d(\lambda) = m+n$ for some $m, n \in {\mathbb N}^k$, then there are unique $\mu,
\nu \in \Lambda$ such that $d(\mu) = m$, $d(\nu) = n$, and $\lambda = \mu\nu$. 

Let $\Lambda$ be a $k$-graph. 
We use the notational convention
whereby the juxtaposition $UV$ of subsets $U,V \subseteq \Lambda$ means $\{\mu\nu : \mu
\in U, \nu \in V, s(\mu) = r(\nu)\}$. If one of $U, V$ is a singleton, we typically drop
the braces from our notation; so for $v \in \Lambda^0$, the expression $v\Lambda$ means
the same as $\{v\}\Lambda$, namely $\{\lambda \in \Lambda : r(\lambda) = v\}$.

Following \cite{KP}, $\Lambda$ is \emph{row-finite} if
$v\Lambda^n$ is finite for every $v\in\Lambda^0$ and $n\in{\mathbb N}^k$; $\Lambda$ \emph{has no sources} if
$v\Lambda^n$ is nonempty for every $v\in\Lambda^0$ and $n\in{\mathbb N}^k$. In this paper we are only interested  in row-finite $k$-graphs with no sources. 

\begin{ex}\label{ex-k} 
Let $\Omega_k$ be the category with objects $\N^k$, morphisms $\{(p,q)\in \N^k\times\N^k : p\leq q\}$, domain and codomain maps given by $s(p,q)=q$ and $r(p,q)=p$ respectively, and composition given by $(p,q)(q,r)= (p,r)$. Define $d : \Omega_k \to \N^k$ by $d(p,q) := q-p$.  With this structure, $\Omega_k$ is a $k$-graph (where we identify $\Omega_k^0 = \{(p,p) : p \in \N^k\}$ with $\N^k$ via $(p,p)\mapsto p$).
\end{ex}

Following \cite{RSY-PEMS}, a subset $H$ of $\Lambda^0$ is \emph{hereditary} if $s(H\Lambda) \subseteq H$ and is
\emph{saturated} if $v \in H$ whenever $s(v\Lambda^n) \subseteq H$. Analogously to the definitions for directed graphs $E$ above, define
\[
\mathcal{H}_\Lambda := \{H \subseteq \Lambda^0 : H\text{ is saturated and hereditary}\}.
\]
Given a subset $\mathcal{A}$ of $\mathcal{H}_\Lambda$, let $\bigvee\mathcal{A}$ denote
the smallest element of $\mathcal{H}_\Lambda$ containing $\bigcup_{K \in \mathcal{A}} K$.
So
\[
\bigvee \mathcal{A} := \bigcap_{H \in \mathcal{H}_\Lambda,\; K \subseteq H\text{ for all }K\in\mathcal{A}} H.
\]
(We describe this $\bigvee$ operation explicitly in Lemma~\ref{lem:lattice}.)

The following is from \cite[\S2]{KP}. Let $\Omega_k$ be the $k$-graph of Example~\ref{ex-k}. An \emph{infinite path} in $\Lambda$ is a $k$-graph morphism $x:\Omega_k\to\Lambda$; the set of infinite paths is denoted by $\Lambda^\infty$.   We write $x(m)$ for the vertex $x(m,m)$. Then the range of an infinite path $x$ is the vertex $r(x):=x(0)$, and we write $v\Lambda^\infty:=r^{-1}(v)$.

 For $\lambda\in \Lambda$, set $Z(\lambda)=\{x\in\Lambda^\infty: x(0,d(\lambda))=\lambda\}$.  
Then  $\{Z(\lambda):\lambda\in\Lambda\}$ is a basis for a topology, and we equip $\Lambda^\infty$ with this topology.  Then $\Lambda^\infty$ is a totally disconnected, locally compact Hausdorff space, and each $Z(\lambda)$ is compact and open. For $p\in\N^k$ define $\sigma^p:\Lambda^\infty\to\Lambda^\infty$ by $\sigma^p(x)(m,n)=x(m+p, n+p)$.

By \cite[Definition~4.3]{KP}, a $k$-graph is \emph{aperiodic} if for every $v \in \Lambda^0$ there
exists $x \in Z(v)=v\Lambda^\infty$ such that
\begin{equation}\label{eq:aperiodic}
    \sigma^m(x) \not= \sigma^n(x)\text{ for all distinct $m,n \in \mathbb{N}^k$.}
\end{equation}
We say $\Lambda$ is \emph{strongly aperiodic} if for every saturated hereditary subset $H\neq \Lambda^0$ of $\Lambda$, the $k$-graph  $\Lambda\setminus H$ is  aperiodic. This is the analogue for $k$-graphs of Condition~(K) for directed graphs. (The terminology ``strongly aperiodic'' was coined in \cite[Definition~3.1]{KangPask:IJM14}, but the condition itself appeared earlier, for example, in \cite[Proposition~4.5]{RS-Israel}.)
We prove in Corollary~\ref{cor-se-iff-sa} below that $\Lambda$ is strongly aperiodic if and only if the graph groupoid of $\Lambda$ is strongly effective.

We refer to \cite[\S3]{ACaHR} for the definition of the Kumjian--Pask algebra path $\KP_R(\Lambda)$. We write $(p,s)$ for the universal generating Kumjian--Pask family in  $\KP_R(\Lambda)$. 
For $H\in \mathcal{H}_\Lambda$, the ideal $I_H$ of $\KP_R(E)$ generated by $\{p_v:v\in H\}$ is a basic ideal by \cite[Lemma~5.4]{ACaHR}.
When $\Lambda$ is strongly aperiodic, the map $H\mapsto I_H$ is an isomorphism from the lattice of saturated hereditary subsets of $\Lambda^0$ onto the lattice of basic ideals of $\KP_R(\Lambda)$ by \cite[Corollary~5.7]{ACaHR}.  We can now state our theorem for Kumjian--Pask algebras --- it looks very similar to Theorem~\ref{thm:LPideals}.

\begin{thm}\label{thm:KPideals}
Let $\Lambda$ be a row-finite $k$-graph with no sources and let $R$ a commutative ring with
identity. Suppose that $\Lambda$ is strongly aperiodic.
\begin{enumerate}
\item\label{it:KPideals} Suppose that $I$ is an ideal in $\KP_R(\Lambda)$. Then
    \[
        I= \lsp_R\{rs_{\lambda}s_{\mu^*}: rp_{s(\mu)} \in I\}.
    \]
\item\label{it:KPlattice} Let $\mathcal{H}_\Lambda$ be the set of all saturated
    hereditary subsets of $\Lambda^0$, let $\mathcal{L}(R)$ be the set of ideals of
    $R$ and let $\mathcal{F}$ be the set of all functions $\pi:\mathcal{H}_\Lambda
    \to \mathcal{L}(R)$ such that
    \begin{equation}\label{eq:2cain-KP}
    \pi\Big(\bigvee_{H \in \mathcal{A}}H\Big) = \bigcap_{H \in \mathcal{A}}\pi(H)
        \quad\text{ for all $\mathcal{A} \subseteq \mathcal{H}_\Lambda$.}
    \end{equation}
    Then the map $\Gamma:\mathcal{F} \to \mathcal{L}(\KP_R(\Lambda))$ given by
    \[
        \Gamma(\pi) = \lsp_R\{rs_{\mu}s_{\nu^*} : \text{ there exists }H \in \mathcal{H}_{\Lambda} \text{ such that }
        r \in \pi(H) \text{ and } s_{\mu}s_{\nu^*} \in I_H\}
    \]
    is a bijection.
\item\label{it:KPm&j} Let $\pi_1,\pi_2\in\mathcal{F}$. Then
    $\Gamma(\pi_1) \subseteq \Gamma(\pi_2)$ if and only if $\pi_1(H) \subseteq
    \pi_2(H)$ for all $H \in \mathcal{H}_\Lambda$.
\end{enumerate}
\end{thm}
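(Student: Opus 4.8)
The plan is to transport the groupoid results of Sections~\ref{sec:nonbasic}--\ref{sec:lattice} to $\KP_R(\Lambda)$ through the standard isomorphism between a Kumjian--Pask algebra and the Steinberg algebra of its graph groupoid. Write $\mathcal{G}_\Lambda$ for the graph groupoid of $\Lambda$; it is ample, Hausdorff and second countable with unit space $\go = \Lambda^\infty$, and by Corollary~\ref{cor-se-iff-sa} it is strongly effective because $\Lambda$ is strongly aperiodic. There is an $R$-algebra isomorphism $\phi:\KP_R(\Lambda) \to A_R(\mathcal{G}_\Lambda)$ carrying each generator $s_\mu s_{\nu^*}$ (with $s(\mu)=s(\nu)$) to the indicator function $1_{Z(\mu,\nu)}$ of the basis bisection $Z(\mu,\nu) := \{(\mu z, d(\mu)-d(\nu), \nu z) : z \in s(\mu)\Lambda^\infty\}$, and in particular $p_v \mapsto 1_{Z(v)}$. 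Since $s(Z(\mu,\nu)) = Z(\nu)$ and $1_{Z(\nu)} = \phi(s_\nu s_{\nu^*})$, and since $\phi$ is an algebra isomorphism and $I$ an ideal, I record the key equivalence $r1_{s(Z(\mu,\nu))} \in \phi(I) \Longleftrightarrow r s_\nu s_{\nu^*} \in I \Longleftrightarrow r p_{s(\nu)} \in I$.

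For part~\eqref{it:KPideals}, I would apply Proposition~\ref{prop:ideals} to $\phi(I)$, which is therefore spanned by the $r1_B$ with $r1_{s(B)} \in \phi(I)$, and then reduce to basis bisections. Every compact open bisection $B$ decomposes as a finite disjoint union $B = \bigsqcup_i Z(\mu_i,\nu_i)$ with $Z(\nu_i) = s(Z(\mu_i,\nu_i)) \subseteq s(B)$; multiplying $r1_{s(B)}$ on the left by $1_{Z(\nu_i)}$ shows $r1_{Z(\nu_i)} \in \phi(I)$, whence $r1_{Z(\mu_i,\nu_i)} = 1_{Z(\mu_i,\nu_i)} * (r1_{Z(\nu_i)})$ lies in the span of the basis terms. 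Pulling back through $\phi$ and applying the key equivalence gives exactly the description in~\eqref{it:KPideals}.

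For part~\eqref{it:KPlattice}, I would first observe that $H \mapsto U_H := \{x \in \Lambda^\infty : x(n) \in H \text{ for some }n\}$ is a lattice isomorphism from $\mathcal{H}_\Lambda$ onto $\mathcal{O} \cup \{\emptyset\}$, obtained by composing the basic-ideal isomorphism $H \mapsto I_H$ with the isomorphism $U \mapsto I_U$ of Theorem~\ref{thm:basicideal} transported through $\phi$. As joins in both lattices are unions, this carries $\bigvee$ to $\bigcup$, so the consistency condition~\eqref{eq:2cain-KP} on $\pi : \mathcal{H}_\Lambda \to \mathcal{L}(R)$ matches~\eqref{eq:2cain} for the associated $\hat\pi:\mathcal{O} \to \mathcal{L}(R)$, $\hat\pi(U_H) := \pi(H)$; the empty family forces $\pi(\emptyset)=R$, which carries no information and matches the omission of $\emptyset$ from $\mathcal{O}$. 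I would then verify the dictionary $s_\mu s_{\nu^*} \in I_H \Leftrightarrow s(\nu)\in H \Leftrightarrow Z(\nu) \subseteq U_H \Leftrightarrow \supp(1_{Z(\mu,\nu)}) \subseteq (\mathcal{G}_\Lambda)_{U_H}$, so that, reducing arbitrary $f$ with $\supp f \subseteq (\mathcal{G}_\Lambda)_{U_H}$ to basis bisections as in part~\eqref{it:KPideals} and using Lemma~\ref{exact seq}, the spanning set defining $\Gamma(\pi)$ corresponds under $\phi$ to that defining $\Gamma_{\mathcal{G}}(\hat\pi)$ in Theorem~\ref{thm:bijection}. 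Hence $\Gamma = \phi^{-1}\circ \Gamma_{\mathcal{G}} \circ (\pi \mapsto \hat\pi)$ is a composite of bijections, proving~\eqref{it:KPlattice}; part~\eqref{it:KPm&j} is then immediate from Lemma~\ref{lem:containment}, since $\Gamma(\pi_1)\subseteq\Gamma(\pi_2) \Leftrightarrow \hat\pi_1(U)\subseteq\hat\pi_2(U)$ for all $U \in \mathcal{O} \Leftrightarrow \pi_1(H)\subseteq\pi_2(H)$ for all $H \in \mathcal{H}_\Lambda$.

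I expect the main obstacle to be bookkeeping rather than conceptual: pinning down the generator--bisection dictionary with all source--range and degree conventions correct, and checking carefully that restricting the spanning sets to the basis bisections $Z(\mu,\nu)$ leaves the ideals $\Gamma(\pi)$ unchanged. The one genuinely substantive verification is the chain $s_\mu s_{\nu^*} \in I_H \Leftrightarrow s(\nu)\in H \Leftrightarrow Z(\nu) \subseteq U_H$, where the forward direction of the second equivalence uses heredity and the reverse uses saturation to build an infinite path out of $s(\nu)$ that avoids $H$.
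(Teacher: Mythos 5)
Your proposal is correct and follows essentially the same route as the paper: transport everything through the isomorphism $\KP_R(\Lambda)\cong A_R(G_\Lambda)$, invoke Corollary~\ref{cor-se-iff-sa} to get strong effectiveness, apply Proposition~\ref{prop:ideals}, Theorem~\ref{thm:bijection} and Lemma~\ref{lem:containment}, and reduce to the basis bisections $Z(\mu,\nu)$ with the same $r1_{Z(\nu)}\leftrightarrow rp_{s(\nu)}$ conversion the paper uses. The one point of divergence is the correspondence $H\mapsto U_H$: the paper proves directly (Lemma~\ref{lem:lattice}) that it is a bijection from $\mathcal{H}_\Lambda$ onto the open invariant sets intertwining $\bigvee$ with $\bigcup$, whereas you deduce bijectivity by composing the basic-ideal lattice isomorphism of \cite{ACaHR} with that of Theorem~\ref{thm:basicideal}; this is legitimate (an order isomorphism preserves arbitrary joins, and the join of open invariant sets is their union --- though note the join in $\mathcal{H}_\Lambda$ is $\bigvee$, not $\bigcup$, so your phrase ``joins in both lattices are unions'' needs that reading), but you must still verify that the composite is given by the explicit formula for $U_H$, i.e.\ that $\phi(I_H)=I_{U_H}$, and the substantive step there is exactly the equivalence $s(\nu)\in H\Leftrightarrow Z(\nu)\subseteq U_H$ that you flag, whose nontrivial direction (compactness of $Z(s(\nu))$ plus saturation, or equivalently the cofinal-subsequence construction of an infinite path never entering $H$) is the core of Lemma~\ref{lem:lattice}. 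So your shortcut saves organisation rather than substance, and the argument goes through.
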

To recover Theorem~\ref{thm:LPideals} from Theorem~\ref{thm:KPideals}, recall that for a
row-finite graph $E$ with no sources, the Leavitt path algebra $L_R(E)$ is canonically
isomorphic to the Kumjian--Pask algebra $\KP_R(E^*)$ where $E^*$ is the 
path-category of $E$ as in \cite[Example~1.3]{KP}.

Before starting the proof of Theorem~\ref{thm:KPideals}, we need to introduce the graph groupoid $G_\Lambda$ from \cite[Definition~2.7]{KP}. Let $\Lambda$ be a row-finite $k$-graph $\Lambda$ with no sources. Define
\[
    G_{\Lambda} := \{(x,l,y) \in\Lambda^\infty\times \Z^k\times\Lambda^\infty:\exists m,n\in\N^k \text{ such that }l=m-n\text{\ and\ }\sigma^m(x) = \sigma^n(y)\}.
\]
Then $G_{\Lambda}$ is a groupoid with composition and inverse given by
\[
    (x,l,y)(y, m, z) = (x, l+m, z)\quad\text{and}\quad (x,l,y)^{-1} = (y,-l,x).
\]
For $\mu, \nu \in \Lambda$ with $s(\mu) = s(\nu)$ set 
\[   
    Z(\mu,\nu) := \{(\mu z, d(\mu)-d(\nu), \nu z)  : z\in Z(s(\mu))\}
\]
Then $\{Z(\mu,\nu) :\mu,\nu\in \Lambda, s(\mu) = s(\nu)\}$ is a basis for a topology on $G_\Lambda$; we equip $G_\Lambda$ with this topology.  Then $G_{\Lambda}$ is an ample Hausdorff groupoid (see \cite[Proposition~2.8]{KP}). The unit space $G_{\Lambda}^{(0)}$  is $\{(x,0,x):x\in\Lambda^\infty\}$, which we identify with $\Lambda^\infty$; the identification takes $Z(\mu, \mu)$ to $Z(\mu)$.

Let $R$ be any commutative ring with identity. The Kumjian--Pask algebra KP$_R(\Lambda)$ is canonically isomorphic to 
the Steinberg algebra of  $A_R(G_{\Lambda})$. This is proved in \cite[Proposition~4.3]{CFST} when $R=\C$, for a directed graph  in \cite[Example~3.2]{CS2015}, and, most generally,  for a finitely aligned $k$-graph   in \cite[Proposition~5.4]{CP}. (A row-finite $k$-graph with no sources is finitely aligned.)

The next lemma establishes  the relationship between saturated hereditary
subsets of $\Lambda^0$ and open invariant subsets of $G_\Lambda^{(0)}$; we write
$\mathcal{O}_\Lambda$ for the latter.  This lemma is known but doesn't seem to be
recorded in the literature.

\begin{lemma}\label{lem:lattice}
Let $\Lambda$ be a row-finite $k$-graph with no sources. With notation as above, for
$\mathcal{A} \subseteq \mathcal{H}_\Lambda$, we have
\begin{equation}\label{eq:vee}
\textstyle \bigvee_{H \in \mathcal{A}} H
    = \{v \in \Lambda^0 : \text{ there exists } n \in \N^k\text{ such that }
            s(v\Lambda^n) \subseteq \bigcup_{H \in\mathcal{A}} H\}.
\end{equation}
The map $H \mapsto U_H$ from $\mathcal{H}_\Lambda$ to $\mathcal{O}_\Lambda$ given by
\[
U_H := \{x \in \Lambda^\infty : x(n) \in H\text{ for large }n \in \N^k\}
\]
is a bijection. For $H \in \mathcal{H}_\Lambda$, we have $H = \{v \in E^0 : Z(v)
\subseteq U_H\}$, and $U_{\bigvee_{H \in \mathcal{A}} H} = \bigcup_{H \in \mathcal{A}}
U_H$ for all $\mathcal{A} \subseteq \mathcal{H}_\Lambda$.
\end{lemma}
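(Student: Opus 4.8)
The plan is to establish the four assertions in an order that lets later ones use earlier ones: first the formula~\eqref{eq:vee} for $\bigvee\mathcal{A}$; then that $U_H$ is open and invariant and that $H=\{v:Z(v)\subseteq U_H\}$, which yields injectivity of $H\mapsto U_H$; then surjectivity; and finally the join formula $U_{\bigvee\mathcal{A}}=\bigcup_{H\in\mathcal{A}}U_H$, which I expect to read off directly from~\eqref{eq:vee}.

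For~\eqref{eq:vee}, write $S:=\bigcup_{H\in\mathcal{A}}H$ and let $T$ be the right-hand side of~\eqref{eq:vee}. First I would note that $S$ is hereditary, being a union of hereditary sets, and that $S\subseteq T$ (take $n=0$). To show $T$ is hereditary and saturated I would use the factorisation property and heredity of $S$. For heredity: if $s(v\Lambda^n)\subseteq S$ and $\lambda\in v\Lambda$, then for any $\tau\in s(\lambda)\Lambda^n$ the composite $\lambda\tau$ has an initial segment in $v\Lambda^n$, whose source lies in $S$, so heredity of $S$ forces $s(\tau)=s(\lambda\tau)\in S$; thus $s(s(\lambda)\Lambda^n)\subseteq S$ and $s(\lambda)\in T$. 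For saturation: if $s(v\Lambda^p)\subseteq T$, then row-finiteness lets me pick a single $n$ with $s(w\Lambda^n)\subseteq S$ for every $w$ in the finite set $s(v\Lambda^p)$, and factorisation then gives $s(v\Lambda^{p+n})\subseteq S$, so $v\in T$. Finally, if $H'$ is saturated hereditary with $S\subseteq H'$ and $v\in T$, then $s(v\Lambda^n)\subseteq S\subseteq H'$ for some $n$, so saturation of $H'$ gives $v\in H'$; hence $T\subseteq H'$ and $T=\bigvee\mathcal{A}$.

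For the bijection I would first observe that, for hereditary $H$, the defining condition of $U_H$ is equivalent to ``$x(n)\in H$ for some $n$'', so that $U_H=\bigcup\{Z(\lambda):s(\lambda)\in H\}$ is open. Invariance is immediate from shift-tail-equivalence: if $(x,l,y)\in G_\Lambda$ with $\sigma^m(x)=\sigma^n(y)$ and $y\in U_H$, then $x(m+q)=y(n+q)\in H$ for large $q$, so $x\in U_H$; throughout one must apply invariance in the correct direction, using for instance that $(y,-d(\lambda),\lambda y)\in G_\Lambda$ has source $\lambda y$ and range $y$, so that $\lambda y\in U$ yields $y\in U$. The recovery formula $H=\{v:Z(v)\subseteq U_H\}$ is the crux: ``$\subseteq$'' is immediate, while for ``$\supseteq$'' I would cover the compact set $Z(v)$ by the basic sets $Z(\lambda)$ with $r(\lambda)=v$ and $s(\lambda)\in H$, extract a finite subcover $Z(\lambda_1),\dots,Z(\lambda_j)$, and put $n:=\bigvee_i d(\lambda_i)$; extending each $\tau\in v\Lambda^n$ to an infinite path (no sources) shows that some $\lambda_i$ is an initial segment of $\tau$, so $s(\tau)\in H$ by heredity, giving $s(v\Lambda^n)\subseteq H$ and hence $v\in H$ by saturation. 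This recovery formula gives injectivity, and it also drives surjectivity: given $U\in\mathcal{O}_\Lambda$ I would set $H:=\{v:Z(v)\subseteq U\}$, check $H\in\mathcal{H}_\Lambda$ using invariance of $U$ and factorisation, and verify $U_H=U$ by the same shift-and-invariance manoeuvre.

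Finally, for $U_{\bigvee\mathcal{A}}=\bigcup_{H\in\mathcal{A}}U_H$, the inclusion ``$\supseteq$'' is monotonicity of $H\mapsto U_H$. For ``$\subseteq$'', take $x\in U_{\bigvee\mathcal{A}}$, so $x(n)\in\bigvee\mathcal{A}$ for some $n$; by~\eqref{eq:vee} there is $m$ with $s(x(n)\Lambda^m)\subseteq S$, and since $x(n+m)\in s(x(n)\Lambda^m)$ we get $x(n+m)\in H$ for some $H\in\mathcal{A}$, whence $x\in U_H$. I expect the compactness-and-factorisation step in the recovery formula to be the main obstacle: it is the one place where compactness of $Z(v)$, row-finiteness and the no-sources hypothesis must all be combined at once, and aligning the degrees via $\bigvee_i d(\lambda_i)$ and the factorisation property is the delicate point, whereas~\eqref{eq:vee} and the invariance bookkeeping are routine once the directions are pinned down.
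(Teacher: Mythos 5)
Your proposal is correct, and for the bijection, the recovery formula $H=\{v:Z(v)\subseteq U_H\}$, and the identity $U_{\bigvee\mathcal{A}}=\bigcup_{H\in\mathcal{A}}U_H$ it follows essentially the same route as the paper: the same compactness-of-$Z(v)$/finite-subcover/common-degree argument (the paper runs it to prove injectivity directly and then deduces the recovery formula from surjectivity, whereas you prove the recovery formula first and deduce injectivity, but the content is identical), the same definition $H(U)=\{v:Z(v)\subseteq U\}$ for surjectivity, and the same use of~\eqref{eq:vee} for the join formula. Where you genuinely diverge is in the proof of~\eqref{eq:vee} itself. The paper proves the inclusion ``$\subseteq$'' by contrapositive: given $v$ with $s(v\Lambda^n)\not\subseteq\bigcup_{H\in\mathcal{A}}H$ for all $n$, it uses compactness of $Z(v)$ to extract an infinite path $x\in Z(v)$ with $x(n)\notin\bigcup_{H\in\mathcal{A}}H$ for all $n$, and then builds the auxiliary set $K=\{w:w\Lambda x(n)=\emptyset\text{ for all }n\}$, verifying that $K$ is saturated and hereditary, contains $\bigcup_{H\in\mathcal{A}}H$, and excludes $v$. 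You instead verify directly that the right-hand side $T$ of~\eqref{eq:vee} is itself hereditary and saturated and is contained in every saturated hereditary set containing $\bigcup_{H\in\mathcal{A}}H$, so that $T=\bigvee\mathcal{A}$ by minimality. Your saturation step is the only delicate point --- one must use row-finiteness to replace the finitely many degrees $n_w$, $w\in s(v\Lambda^p)$, by their join $n$ and then check via the factorisation property and heredity of $\bigcup_{H\in\mathcal{A}}H$ that $s(w\Lambda^{n_w})\subseteq\bigcup_{H\in\mathcal{A}}H$ implies $s(w\Lambda^{n})\subseteq\bigcup_{H\in\mathcal{A}}H$ --- and you carry it out correctly. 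Your route is shorter and arguably more transparent (it is the standard description of the saturated hereditary closure); the paper's route has the minor virtue of exhibiting an explicit separating saturated hereditary set $K$, but nothing later in the paper uses that.
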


\begin{proof}
To establish~\eqref{eq:vee}, first take $v \in \Lambda^0$ and $n \in \N^k$ such that
$s(v\Lambda^n) \subseteq\bigcup_{H \in \mathcal{A}} H$. 
So $v$ belongs to every saturated hereditary set containing
$\bigcup_{H \in \mathcal{A}} H$, and therefore to $\bigvee_{H \in \mathcal{A}} H$. This
establishes ``$\supseteq$" in~\eqref{eq:vee}.

For the reverse containment, we show that any $v$ that does not belong to the right-hand
side of~\eqref{eq:vee} also does not belong to the left-hand side. Fix $v \in \Lambda^0$
such that $s(v\Lambda^n) \not\subseteq \bigcup_{H \in \mathcal{A}} H$ for all $n \in
\N^k$. For each $i\in\N$, let $n_i := (i, i, \dots, i) \in \N^k$. For each $i$, choose $\lambda_i\in v\Lambda^{n_i}$ such that $s(\lambda_i)\notin   \bigcup_{H \in \mathcal{A}} H$. Since $\Lambda$ has no sources, we can extend each $\lambda_i$ to an infinite path $x_i\in v\Lambda^\infty$ such that $x_i(n_i) \not\in \bigcup_{H \in \mathcal{A}} H$.  Since $v\Lambda^\infty=Z(v)$ is compact, there is a subsequence of $\{x_i\}$ converging to some $x \in Z(v)$. Since the $H \in \mathcal{A}$ are all hereditary, each $x_i(n_i)\notin \bigcup_{H \in \mathcal{A}} H$ implies $x_i(n)\notin \bigcup_{H \in \mathcal{A}} H$  for $n\leq n_i$. Since $\{n_i\}$ is a cofinal sequence,  it follows that $x(n) \not \in \bigcup_{H \in \mathcal{A}} H$ for all $n$.
Set \[K := \{w \in \Lambda^0 : w \Lambda x(n) =
\emptyset\text{ for all }n \in \N^k\}.\] Then $K$ is hereditary.

To see that $K$ is saturated, fix $w \in \Lambda^0$ and $n \in \mathbb{N}^k$ such that
$s(w\Lambda^n) \subseteq K$. Fix $\lambda \in w\Lambda$. For any $\alpha \in
s(\lambda)\Lambda^n$, by the factorisation property there exist $\mu \in w\Lambda^n$ and $\beta\in \Lambda$ such that $\lambda\alpha = \mu\beta$.
Now $s(\mu) \in s(w\Lambda^n)\subseteq K$. Thus $s(\beta) \neq x(m)$ for all $m \in \N$.   In particular, $\alpha
\neq x(p, p+n)$ for any $p$. Thus $s(\lambda)\neq x(p)$ for all $p$, and hence $w \in K$.

We have $v \not\in K$ by construction of $K$. Fix $u \in H
\in \mathcal{A}$. If $u\Lambda x(n)$ were nonempty for some $n$, we would have $x(n) \in
H$ because $H$ is hereditary, and this is impossible by construction of $x$. So $u \in
K$. That is, $K$ is a saturated hereditary set containing $\bigcup_{H \in \mathcal{A}} H$
and not containing $v$, and it follows from the definition of $\bigvee_{H \in
\mathcal{A}} H$ that $v \not\in \bigvee_{H \in \mathcal{A}} H$ as required.  This gives~\eqref{eq:vee}.

To see that $H \mapsto U_H$ is injective, suppose that $U_{H_1} = U_{H_2}$. By symmetry,
we just have to show that $H_1 \subseteq H_2$. Let $v \in H_1$. Then $Z(v) \subseteq
U_{H_1} = U_{H_2}$. So for each $x \in Z(v)$, there exists $n_x\in\N^k$ such that $x(n_x) \in
H_2$. The sets $Z(x(0, n_x))$ cover $Z(v)$, and so there is a finite $F \subseteq Z(v)$
such that $\{Z(x(0, n_x)) : x \in F\}$ covers $Z(v)$. Take $N := \bigvee_{x \in F} n_x$. Let $\lambda \in
v\Lambda^N$. Then $\lambda\in Z(v)$ implies $\lambda\in Z(x(0, n_x))$ for some  $x \in F$. 
Since $x(n_x)\in H_2$ and $H_2$ is hereditary, we have  $s(\lambda) \in H_2$. Thus $s(v\Lambda^N)\subseteq H_2$. Since $H_2$ is saturated as well, we have $v \in H_2$.

To see that $H \mapsto U_H$  is surjective, fix an open invariant $U \subseteq G_\Lambda^{(0)}$. Put \[H(U) := \{v
\in \Lambda^0 : Z(v) \subseteq U\}.\] 
We claim that $H(U)$ is saturated and hereditary, and that $U_{H(U)}=U$. To see that $H(U)$ is hereditary, let $w\in H(U)$ and $\lambda\in w\Lambda$. Let $x\in  Z(s(\lambda))$. Then $(\lambda x, d(\lambda), x)\in G_\Lambda$ and $\lambda x=r(\lambda x, d(\lambda), x)\in Z(w)\subseteq U$. Since $U$ is invariant, $x=s(\lambda x, d(\lambda), x)\in U$ as well. Thus $Z(s(\lambda))\subseteq U$, and hence $s(\lambda)\in H(U)$.
So $H(U)$ is hereditary.  To see that $H(U)$ is saturated, let  $n
\in \N^k$ and $w \in \Lambda^0$, and suppose  that $s(w\Lambda^n) \subseteq H(U)$. If $\lambda\in w\Lambda^n$, then $s(\lambda)\in H(U)$ implies $\lambda x\in U$ by invariance of $U$.
Thus \[Z(w) =\bigcup_{\lambda\in w\Lambda^n} Z(\lambda)=
\bigcup_{\lambda \in w\Lambda^n} \{r(\lambda x, d(\lambda), x) : x \in Z(s(\lambda))\}
\subseteq U.\]  So $w \in H(U)$. Thus $H(U)$ is saturated. 

To see  that $U_{H(U)} = U$, first let $x \in U$.  Since $U$ is open there exists $\lambda\in \Lambda$ such that $x\in Z(\lambda)\subseteq U$. Since $U$ is  invariant and $(x, d(\lambda), \sigma^{d(\lambda)}(x))\in G_\Lambda$ we see that $Z(s(\lambda)) \subseteq U$ as well. Thus $s(\lambda) \in H(U)$.
Since $H(U)$ is hereditary,  $x(n) \in H(U)$ for all $n \ge d(\lambda)$, and
hence $x \in U_{H(U)}$. 

Second, let $x \in U_{H(U)}$. Then there exists $n$ such that $x(n) \in
H(U)$. Then $Z(x(n)) \subseteq U$, and hence $\sigma^n(x) \in U$. Since $U$ is invariant, it
follows that $x = r(x, n, \sigma^n(x)) \in U$ as well. Thus $U_{H(U)}=U$, and  $H\mapsto U_H$ is surjective.

That $H = \{v \in \Lambda^0 : Z(v)
\subseteq U_H\}$ follows quickly: given $H$, we have  $U_{H(U_H)} = U_H$,
and since  $H \mapsto U_H$ is injective, we deduce that $H =
H(U_H)$, which is $\{v \in E^0 : Z(v) \subseteq U_H\}$ by definition. 

It remains to check compatibility of $\bigvee$ with $\bigcup$. Fix $\mathcal{A} \subseteq
\mathcal{H}_\Lambda$. First suppose that $x \in U_{\bigvee_{H \in \mathcal{A}} H}$. Then
there exists $n$ such that $x(n) \in \bigvee_{H \in \mathcal{A}} H$.
Equation~\eqref{eq:vee} shows that there exists $m$ such that $x(n+m) \in \bigcup_{H \in
\mathcal{A}} H$, so we may fix $H \in \mathcal{A}$ with $x(n+m) \in H$. Since
$H$ is hereditary, we have $x(p) \in H$ for large $p$, giving $x \in U_H \subseteq
\bigcup_{H \in \mathcal{A}} U_H$. Second,  suppose that $x \in \bigcup_{H \in \mathcal{A}}
U_H$. Then $x \in U_H$ for some $H \in \mathcal{A}$, and since $U_H \subseteq
U_{\bigvee_{H \in \mathcal{A}} H}$, we deduce that $x \in U_{\bigvee_{H \in \mathcal{A}}
H}$. Thus $U_{\bigvee_{H \in \mathcal{A}} H} = \bigcup_{H \in \mathcal{A}}
U_H$.
\end{proof}

\begin{cor}\label{cor-se-iff-sa}
\begin{enumerate} \item\label{1cor-se-iff-sa} Let $\Lambda$ be a row-finite $k$-graph with no sources. Then $\Lambda$ is strongly aperiodic if and only if the $k$-graph groupoid $G_\Lambda$ is strongly effective.
\item\label{2cor-se-iff-sa}  Let $E$ be a row-finite directed graph with no sources. Then $E$ satisfies Condition~(K)  if and only if the graph groupoid $G_E$ is strongly effective.
\end{enumerate}
\end{cor}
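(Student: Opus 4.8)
The plan is to prove part~\eqref{1cor-se-iff-sa} first and then deduce part~\eqref{2cor-se-iff-sa} by passing to the path category. For the $k$-graph statement, the strategy is to match the nonempty closed invariant subsets of $\Lambda^\infty = G_\Lambda^{(0)}$ with the saturated hereditary sets $H \neq \Lambda^0$ on one side, and the reductions of $G_\Lambda$ with the groupoids of the sub-$k$-graphs $\Lambda\setminus H$ on the other, so that the two definitions become the same statement applied termwise. First I would use Lemma~\ref{lem:lattice}: since $H \mapsto U_H$ is a bijection from $\mathcal{H}_\Lambda$ onto $\mathcal{O}_\Lambda$, every closed invariant subset of $\Lambda^\infty$ has the form $V_H := \Lambda^\infty \setminus U_H$. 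As $U_{\Lambda^0} = \Lambda^\infty$ and $H \mapsto U_H$ is injective, $V_H$ is nonempty precisely when $H \neq \Lambda^0$; moreover, because $H$ is hereditary, $x(n)\in H$ forces $x(m)\in H$ for all $m \geq n$, so $V_H = \{x \in \Lambda^\infty : x(n)\notin H \text{ for all } n\}$.

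The technical heart is to identify $V_H$ with the infinite-path space of the sub-$k$-graph $\Lambda\setminus H := \{\lambda \in \Lambda : s(\lambda)\notin H\}$ and to show $(G_\Lambda)_{V_H} \cong G_{\Lambda\setminus H}$ as topological groupoids. Here hereditariness gives $s(\lambda)\notin H \Rightarrow r(\lambda)\notin H$, so $\Lambda\setminus H$ is a subcategory and a $k$-graph, while saturation is exactly what makes $\Lambda\setminus H$ have no sources: for $v\notin H$ and any $n$ one has $s(v\Lambda^n)\not\subseteq H$. Both groupoids then consist of the triples $(x,l,y)$ with $x,y\in V_H$ and $\sigma^m(x)=\sigma^n(y)$ for some $m,n$ with $l=m-n$, the shift on $(\Lambda\setminus H)^\infty$ being the restriction of the shift on $\Lambda^\infty$; one checks the basic cylinder sets $Z(\mu,\nu)$ restrict correctly, so the identification is the identity map on triples.

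Next I would invoke the (folklore) equivalence that a countable row-finite $k$-graph with no sources $\Gamma$ is aperiodic if and only if $G_\Gamma$ is topologically principal, which by second countability and the remark in Section~\ref{sec:prelims} is the same as $G_\Gamma$ being effective. Applying this to each $\Lambda\setminus H$ and chaining gives: $\Lambda$ is strongly aperiodic $\iff$ $\Lambda\setminus H$ is aperiodic for every $H\neq\Lambda^0$ $\iff$ $G_{\Lambda\setminus H}\cong (G_\Lambda)_{V_H}$ is effective for every nonempty closed invariant $V_H$ $\iff$ $G_\Lambda$ is strongly effective (note the extreme case $H=\emptyset$ gives $V_\emptyset=\Lambda^\infty$, so aperiodicity of $\Lambda$ itself matches effectiveness of $G_\Lambda$). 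For part~\eqref{2cor-se-iff-sa}, I would pass to the path category $E^*$, using $G_E=G_{E^*}$ and $(E\setminus H)^* = E^*\setminus H$; since a directed graph satisfies Condition~(L) exactly when its path $1$-graph is aperiodic, the characterisation of Condition~(K) recalled before Theorem~\ref{thm:LPideals} says precisely that $E$ satisfies Condition~(K) if and only if $E^*$ is strongly aperiodic, and part~\eqref{1cor-se-iff-sa} then finishes the proof.

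I expect the main obstacle to be the groupoid isomorphism $(G_\Lambda)_{V_H}\cong G_{\Lambda\setminus H}$: the bookkeeping needed to verify that $\Lambda\setminus H$ is a genuine row-finite $k$-graph with no sources (where saturation is essential) and that the two topologies agree on the identified triples. Once that identification is in place, the lattice bijection of Lemma~\ref{lem:lattice} and the single-graph equivalence assemble the result mechanically.
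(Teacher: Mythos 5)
Your proposal is correct and follows essentially the same route as the paper: parameterise the nonempty closed invariant subsets of $\Lambda^\infty$ as $V_H = \Lambda^\infty\setminus U_H$ via the bijection of Lemma~\ref{lem:lattice}, identify $(G_\Lambda)_{V_H}$ with the groupoid of the sub-$k$-graph $\Lambda\setminus\Lambda H$, and then use second countability together with \cite[Proposition~3.6]{Renault:IMSB08} and \cite[Proposition~4.5]{KP} to convert effectiveness into aperiodicity, deducing the directed-graph case through the path category and the Condition~(K)/(L) characterisation. The only cosmetic difference is that you flag the identification $(G_\Lambda)_{V_H}\cong G_{\Lambda\setminus\Lambda H}$ and the no-sources check for $\Lambda\setminus\Lambda H$ as the technical heart, whereas the paper records these with little comment.
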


\begin{proof} For \eqref{1cor-se-iff-sa}, fix a saturated hereditary subset $H$ of $\Lambda^0$ such that $H\neq \Lambda^0$. Let $U_H$ be the corresponding open $G$-invariant subset of $G_\Lambda^{(0)}$ of Lemma~\ref{lem:lattice}. The subcategory $\Lambda\setminus\Lambda H:=(\Lambda^0\setminus H, s^{-1}(\Lambda^0\setminus H), r, s)$ of $\Lambda$ is a $k$-graph.  For $x\in \Lambda^\infty$, we have $x\in (\Lambda\setminus\Lambda H)^\infty$ if and only if $x\not\in U_H$. Then $V_H:=G_\Lambda^{(0)}\setminus U_H$ is a nonempty closed invariant subset of $\go_\Lambda$.  We have 
\[
(G_\Lambda)_{V_H}=\{(x,l,y)\in G_\Lambda: x,y\in V_H\}=\{(x,l,y)\in G_\Lambda: x,y\notin U_H\}=G_{\Lambda\setminus \Lambda H}
\]
Thus $(G_\Lambda)_{V_H}$ is effective if and only if $G_{\Lambda\setminus \Lambda H}$ is effective. Since $G_{\Lambda \setminus \Lambda H}$ is second countable, it is effective
if and only if it is topologically principal by \cite[Proposition~3.6]{Renault:IMSB08}.
Proposition~4.5 of \cite{KP} says that $G_{\Lambda \setminus \Lambda H}$ is
topologically principal if and only if $\Lambda \setminus \Lambda H$ is aperiodic.

Since $H\mapsto U_H$ is a bijection by Lemma~\ref{lem:lattice}, it follows that  $G_\Lambda$ is strongly effective if and only if $\Lambda$ is strongly aperiodic. This gives \eqref{1cor-se-iff-sa}.

For \eqref{2cor-se-iff-sa}, let $F$ be a row-finite directed graph with no sources and let $F^*$ be the path category as in \cite[Example~1.3]{KP}. Then $F^*$ is a $1$-graph.  By \cite[Lemma~4.6]{ACaHR}, $F^*$ is aperiodic if and only if $F$ satisfies Condition~(L). Thus \eqref{2cor-se-iff-sa} follows as in the proof of \eqref{1cor-se-iff-sa}. 
\end{proof}

\begin{proof}[Proof of Theorem~\ref{thm:KPideals}]
Since $\Lambda$ is strongly aperiodic, $G_\Lambda$ is strongly effective by Corollary~\ref{cor-se-iff-sa}. Thus Proposition~\ref{prop:ideals} and Theorem~\ref{thm:bijection} apply to $G_\Lambda$.

(\ref{it:KPideals}) Every compact open bisection in $G_\Lambda$ is a
finite union of basic compact open bisections $Z(\lambda,\mu)$. So in $A_R(G_\Lambda)$,
we have
\begin{align*}
I=\lsp_R\{r1_B : B&\text{ is a compact open bisection with } r 1_{s(B)} \in I\} \\
    &= \lsp_R\{r 1_{Z(\lambda,\mu)} : \lambda,\mu\in \Lambda, s(\lambda)=s(\mu), r1_{Z(\mu)} \in I\}.
\end{align*}
We have
\[
r1_{Z(\mu)} = 1_{Z(\mu,s(\mu))}\big(r 1_{Z(s(\mu))}\big)1_{Z(s(\mu),\mu)}
\quad\text{ and }\quad
r1_{Z(s(\mu))} = 1_{Z(s(\mu),\mu)}\big(r 1_{Z(\mu)}\big)1_{Z(\mu, s(\mu))},
\]
and hence $r1_{Z(\mu)} \in I$ if and only if $r1_{Z(s(\mu))} \in I$. Hence
\begin{align*}
\lsp_R\{r1_B : B&\text{ is a compact open bisection with } r 1_{s(B)} \in I\} \\
    &= \lsp_R\{r 1_{Z(\lambda,\mu)} : \lambda,\mu\in \Lambda, s(\lambda)=s(\mu), r1_{Z(s(\mu))} \in I\}.
\end{align*}
The canonical isomorphism of  $A_R(G_\Lambda)$ onto $\KP_R(\Lambda)$ of \cite[Proposition~5.4]{CP} carries  $1_{Z(\lambda,\mu)}$ to
$s_\lambda s_{\mu^*}$ and $1_{Z(v)}$ to $p_v$. Thus $I=\lsp_R\{rs_{\lambda}s_{\mu^*}: rp_{s(\mu)} \in I\}$
by Proposition~\ref{prop:ideals}.

(\ref{it:KPlattice}) 
Composition with the bijection $U\mapsto H_U$ from $\mathcal{O}_\Lambda$ to $\mathcal{H}_\Lambda$ of 
Lemma~\ref{lem:lattice} carries the functions $\pi :
\mathcal{H}_\Lambda \to \mathcal{L}(R)$ satisfying \eqref{eq:2cain-KP} to
functions from $\mathcal{O}_\Lambda$ to $\mathcal{L}(R)$ satisfying \eqref{eq:2cain} in
Theorem~\ref{thm:bijection}. Thus it follows from Theorem~\ref{thm:bijection} that $\Gamma:\mathcal{F}\to\mathcal{L}(R)$ is a bijection.

The argument of part~(\ref{it:KPideals}) shows that the
isomorphism $\KP_R(\Lambda)$ onto $A_R(G_\Lambda)$ carries the ideal $I_H$ generated by the $p_v$ with $v\in H$
 to the ideal $I_{U_H}$, and hence $\Gamma(\pi)$ has the form claimed.

(\ref{it:KPm&j}) This follows from Lemma~\ref{lem:containment} because $H\mapsto U_H$ preserves containment.
\end{proof}

We conclude by applying our results to two illustrative examples of Leavitt path
algebras.

\begin{ex}
Consider the directed graph $E$ pictured below.
\[
\begin{tikzpicture}
    \node[circle, inner sep=0.5pt] (v0) at (0,0) {\small$v_0$};
    \node[circle, inner sep=0.5pt] (v1) at (2,0) {\small$v_1$};
    \node[circle, inner sep=0.5pt] (v2) at (4,0) {\small$v_2$};
    \node[circle, inner sep=0.5pt] (v3) at (6,0) {\small$v_3$};
    \node (dots) at (7.5,0) {\dots};
    \draw[-stealth] (v0) .. controls +(0.75, 0.75) and +(-0.75,0.75) .. (v0);
    \draw[-stealth] (v0) .. controls +(0.75, -0.75) and +(-0.75,-0.75) .. (v0);
    \draw[-stealth] (v1) .. controls +(0.75, 0.75) and +(-0.75,0.75) .. (v1);
    \draw[-stealth] (v1) .. controls +(0.75, -0.75) and +(-0.75,-0.75) .. (v1);
    \draw[-stealth] (v2) .. controls +(0.75, 0.75) and +(-0.75,0.75) .. (v2);
    \draw[-stealth] (v2) .. controls +(0.75, -0.75) and +(-0.75,-0.75) .. (v2);
    \draw[-stealth] (v3) .. controls +(0.75, 0.75) and +(-0.75,0.75) .. (v3);
    \draw[-stealth] (v3) .. controls +(0.75, -0.75) and +(-0.75,-0.75) .. (v3);
    \draw[-stealth] (v1)--(v0);
    \draw[-stealth] (v2)--(v1);
    \draw[-stealth] (v3)--(v2);
    \draw[-stealth] (dots)--(v3);
\end{tikzpicture}
\]
This $E$ satisfies Condition~(K) because every vertex has two loops, and it has a
linear lattice $\mathcal{H}_E = \{H_n : n \in\N\}$ of saturated hereditary sets
$H_n = \{v_n, v_{n+1}, v_{n+2}, \dots\}$.

Consider the ring $R = \mathbb{Z}$, which has nonzero ideals $\{m\mathbb{Z} : m\in\N\setminus\{0\}\}$. As a notational convenience, we write $\infty\mathbb{Z} := \{0\}$, the trivial
ideal. So we may identify the set of functions $\pi : \mathcal{H}_E \to \mathcal{L}(R)$
with the set of all functions $\pi : \mathbb{N} \to \mathbb{N} \cup\{\infty\}$. Given a
subset $\mathcal{A} \subseteq \mathbb{N}$, we have $\bigvee_{n \in \mathcal{A}} H_n = H_{\min \mathcal{A}}$, and so a
given $\pi : \mathbb{N} \to \mathbb{N} \cup\{\infty\}$ belongs to $\mathcal{F}$ if
$\pi(\min \mathcal{A}) = \operatorname{lcm}\{\pi(n) : n \in \mathcal{A}\}$ for all $\mathcal{A} \subseteq \mathbb{N}$.
This is equivalent to the condition that $\pi(n+1) \mid \pi(n)$ for all $n$ (with the
convention that $n \mid \infty$ for every $n \in \mathbb{N} \cup \{\infty\})$. So
$\mathcal{F}$ consists of functions $\pi : \mathbb{N} \to \mathbb{N} \cup \{\infty\}$
such that $\pi(n+1) \mid \pi(n)$ for all $n$. Given such a function $\pi$, the
corresponding ideal $\Gamma(\pi)$ of $A_\mathbb{Z}(E)$ is
\[
\Gamma(\pi) = \lsp_\mathbb{Z}\{r s_\mu s^*_\nu : n \in \mathbb{N}, \mu,\nu \in E^*v_n\text{ and } r \in \pi(n)\mathbb{Z}\}.
\]
We have $\Gamma(\pi) \subseteq \Gamma(\pi')$ if and only if $\pi'(n) \mid \pi(n)$ for all
$n$. Theorem~\ref{thm:LPideals} shows that this completely describes all the ideals of
$L_\mathbb{Z}(E)$.
\end{ex}

\begin{ex}\label{ex:tree}
Consider the directed graph $E$ pictured below.
\[
\begin{tikzpicture}[yscale=0.5]
    \node[circle, inner sep=0.5pt] (e) at (0,0) {\small$\varnothing$};
    \node[circle, inner sep=0.5pt] (1) at (2,2) {\small$1$};
    \node[circle, inner sep=0.5pt] (0) at (2,-2) {\small$0$};
    \node[circle, inner sep=0.5pt] (11) at (3,3) {\small$11$};
    \node[circle, inner sep=0.5pt] (10) at (3,1) {\small$10$};
    \node[circle, inner sep=0.5pt] (01) at (3,-1) {\small$01$};
    \node[circle, inner sep=0.5pt] (00) at (3,-3) {\small$00$};
    \draw[-stealth] (0)--(e);
    \draw[-stealth] (1)--(e);
    \draw[-stealth] (00)--(0);
    \draw[-stealth] (01)--(0);
    \draw[-stealth] (10)--(1);
    \draw[-stealth] (11)--(1);
    \node at (3.85,-3.85) {$\cdot$};
    \node at (4,-4) {$\cdot$};
    \node at (4.15,-4.15) {$\cdot$};
    \node at (3.85,0) {$\cdot$};
    \node at (4,0) {$\cdot$};
    \node at (4.15,0) {$\cdot$};
    \node at (3.85,3.85) {$\cdot$};
    \node at (4,4) {$\cdot$};
    \node at (4.15,4.15) {$\cdot$};
\end{tikzpicture}
\]
To describe the ideals in $L_\mathbb{Z}(E)$ for this example, it is easiest to apply the
description given in Theorem~\ref{thm:alt description}. For this, observe that the
infinite paths in $E$, which are the units of the associated groupoid $G_E$,
can be
identified with pairs $(\omega, x)$ consisting of a finite word $\omega \in \{0,1\}^*$
and an infinite word $x \in \{0,1\}^\infty$  (when thinking of $\omega$ and $x$ as paths,  $\omega$ corresponds to the unique finite path to the root $\varnothing$ of the tree $E$ from the range of the infinite path $x$).

The graph groupoid $G_E$ then consists of
triples of the form $\big((\omega, x), p-q, (\omega', y)\big)$ such that $p + |\omega| =
q + |\omega'|$ and $x_{p+k} = y_{q+k}$ for all $k$, and from this it is easy to see that
every orbit  of $G_E$ intersects exactly once with the set $\{\varnothing\} \times
\{0,1\}^\infty$ of infinite paths with range $\varnothing$. So the $G_E$-invariant
functions $\rho : G_E^{(0)} \to \mathcal{L}(\mathbb{Z})$ are in bijective correspondence
with functions $\rho_0 : \{0,1\}^\infty \to \mathcal{L}(\mathbb{Z})$; specifically,
$\rho_0(x) = \rho\big((\varnothing, x)\big)$ and $\rho\big((\omega, x)\big) =
\rho_0(\omega x)$. Moreover, $\rho$ is continuous with respect to the  topology on
$\mathcal{L}(\mathbb{Z})$ described just after Example~\ref{ex:twopoints} if and only if $\rho_0$ is continuous with respect to the same topology on $\mathcal{L}(\Z)$ and the product topology on $\{0,1\}^\infty$. So the
assignment $\rho \mapsto \rho_0$ restricts to a bijection between the set $\mathcal{F}'$
of Theorem~\ref{thm:alt description} and the set of continuous functions from
$\{0,1\}^\infty$  (under the product topology) to $\mathcal{L}(\mathbb{Z})$.

To describe the topology on $\mathcal{L}(\mathbb{Z})$, observe that for a finite set $F
\subseteq \mathbb{Z}$, the corresponding open set $Z(F) \subseteq
\mathcal{L}(\mathbb{Z})$ is the set $\{n\mathbb{Z} : n \mid \gcd(F)\}$. Identifying
$\mathcal{L}(\mathbb{Z})$ with $\mathbb{N} \cup \{\infty\}$ as in the previous example,
we see that the open sets in $\mathbb{N} \cup \{\infty\}$ are the sets $\{n : n \mid N\}$
indexed by $N \in \mathbb{N} \cup \{\infty\}$. So a function $\rho_0 : \{0,1\} \to
\mathbb{N} \cup \{\infty\}$ is continuous if and only if whenever $x_j \to x$ in the
product topology on $\{0,1\}^\infty$ we have $\rho_0(x_j) \mid \rho_0(x)$ for large $j$.
For $\nu\in E^*$, write $\omega_\nu$ for the unique element of $\{0,1\}^\infty$ that corresponds to the path to the root $\varnothing$ from $s(\nu)$. By Theorem~\ref{thm:alt description}, the ideal corresponding to such a function $\rho_0$ is
\[
\Gamma'(\rho_0) = \lsp_{\mathbb{Z}}\{n s_\mu s_{\nu^*} :  \mu,\nu \in E^*s(\nu), n \in \rho_0(\omega_\nu x)\text{ for
all }x \in \{0,1\}^\infty\},
\]
and these are all the ideals of $L_\mathbb{Z}(E)$. Moreover, $\Gamma'(\rho_0) \subseteq
\Gamma'(\tau_0)$ if and only if $\tau_0(x) \mid \rho_0(x)$ for all $x \in
\{0,1\}^\infty$.
\end{ex}

\begin{rmk}
In the preceding example we argued directly to prove that we could reduce the problem of
describing $\mathcal{F}'$ to that of describing the collection of continuous functions
$\rho_0 : \{0,1\}^\infty \to \mathcal{L}(\mathbb{Z})$; but as an alternative, we could
have used the results of \cite{CS2015}. The set $X := \{\varnothing\} \times
\{0,1\}^\infty \cong \{0,1\}^\infty$ is compact open and intersects every $G_E$-orbit.
Hence the restriction $H$ of $G_E$ to this subset of the unit space is equivalent, in the
sense of Renault, to $G_E$ by \cite[Lemma~6.1]{CS2015}. So \cite[Theorem~5.1]{CS2015}
implies that $A_{\mathbb{Z}}(G_E)$ is Morita equivalent to $A_{\mathbb{Z}}(H)$, and hence
the ideals of the former are in bijection with the ideals of the latter. Since $X$
intersects every $G_E$-orbit exactly once, $H = H^{(0)}$ is just a copy of the
topological space $X$, so $H$-equivariance of a function $\rho_0 : H^{(0)} \to
\mathcal{L}(\mathbb{Z})$ is a vacuous requirement, and we deduce, once again, that the
ideals of $L_{\mathbb{Z}}(E)$ are in bijection with the continuous functions from
$\{0,1\}^\infty$ to $\mathcal{L}(\mathbb{Z})$.
\end{rmk}

\end{document}